\newtheorem{theorem}{Theorem}[section]
\newtheorem{Theorem}[theorem]{Theorem}
\newtheorem{Lemma}[theorem]{Lemma}
\newtheorem{proposition}[theorem]{Proposition}
\newtheorem{Corollary}[theorem]{Corollary}
\newcommand{\thickhline}{%
    \noalign {\ifnum 0=`}\fi \hrule height 1pt
    \futurelet \reserved@a \@xhline
}
\title{Power Graphs of Finite Group}
\author{Amrita Acharyya}
\address{Department of Mathematics and Statistics\\
University of Toledo, Main Campus\\
Toledo, OH 43606-3390}
\email{Amrita.Acharyya@utoledo.edu}
\author{Allen Williams }
\address{Department of Mathematics and Statistics\\
University of Toledo, Main Campus\\
Toledo, OH 43606-3390}
\email{Allen.Williams2@rockets.utoledo.edu}
\subjclass[2010]{20F05, 68R01, 68R05, 68R10}
\keywords{Power graphs, Finite Groups, Chordless cycles, Cyclic subgroup graphs}
\begin{document}
\begin{abstract}
The Directed Power Graph of a group is a graph whose vertex set is the elements of the group, with an edge from $x$ to $y$ if $y$ is a power of $x$.  The \textit{Power Graph} of a group can be obtained from the directed power graph by disorienting its edges. This article discusses properties of cliques, cycles, paths, and coloring in power graphs of finite groups.  A construction of the longest directed path in power graphs of cyclic groups is given, along with some results on distance in power graphs.  We discuss the cyclic subgroup graph of a group and show that it shares a remarkable number of properties with the power graph, including independence number, completeness, number of holes etc., with a few exceptions like planarity and Hamiltonian.
\end{abstract}


\maketitle
\section{Introduction}
The power graph of finite groups has been studied in \cite{Power finite}, \cite{power semi}, \cite{power combi}, \cite{Power finiteII}.
In this work, we denote the group of integers mod $n$ under addition by $\mathbb{Z}_n$, or $\mathbb{Z} / n\mathbb{Z}$.  These notations are used interchangeably in this article.  It can be shown that any cyclic group of order $n$ is isomorphic to $\mathbb{Z}_n$. 

A $\textit{graph}$ is a pair $\Gamma=(V(\Gamma),E(\Gamma))$ where $V(\Gamma)$ is a non-empty set, called the \textit{vertex set} whose elements are called \textit{vertices}, and $E(\Gamma)$ is a (possibly empty) set consisting of sets of pairs of elements of $V(\Gamma)$ called the $\textit{edge set}$, whose elements are called $\textit{edges}.$  If $e\in E(\Gamma)$ is an edge and $v\in e$, then $e$ is said to be $\textit{incident }$to $v$.  The \textit{degree} of a vertex $v$ is the number of edges incident to $v$.  If $\{v_1,v_2\}\in E(\Gamma)$, then $v_1$ and $v_2$ are said to be adjacent. Where there is no ambiguity, $V(\Gamma)$ is sometimes denoted as just $V$, and similarly $E(\Gamma)$ as E.  A $\textit{directed graph}$ or digraph is a pair $\Gamma=(V(\Gamma),E(\Gamma))$ where $V(\Gamma)$ is a non-empty vertex set, and $E$ is a (possible empty) set consisting of ordered pairs of elements in $V(\Gamma)$ called the edge set.  

Let $G$ be a group.  The \textit{power graph} of $G$, denoted by $\mathfrak{g}(G)$ is defined as the graph with vertex set consisting of the elements of $G$, and edge set $E=\{\{x,y\}\mid x\neq y $ and $\langle x\rangle \leq \langle y\rangle$ or $\langle y\rangle \leq \langle x\rangle \}$. The \textit{directed power graph} of $G$, denoted $\vec{\mathfrak{g}}(G)$ is defined as the graph with vertex set consisting of the elements of $G$, and edge set $E=\{(x,y)\mid x\neq y $ and $\langle y\rangle \leq \langle x\rangle$\}.

\section{Some Properties of Power Graphs of Groups}
A graph $\Gamma$ is called $\textit{connected}$ if for any two vertices $u$ and $v$ in $\Gamma$, there is a path joining $u$ and $v$.  In a connected graph $\Gamma$, the distance between $u$ and $v$, denoted $d_{\Gamma}(u,v)$, can be defined as the length of the shortest path joining $u$ and $v$.  The $\textit{eccentricity}$ of a vertex $u$ is the maximum distance between $u$ and any other vertex in $\Gamma$.  The $\textit{radius}$ of $\Gamma$ is the minimum eccentricity of a vertex in $\Gamma$, and the $\textit{diameter}$ of $\Gamma$ is the maximum eccentricity of a vertex in $\Gamma$.  A central vertex in $\Gamma$ is a vertex with eccentricity equal to the radius of $\Gamma$.  The $\textit{center}$ of $\Gamma$ is the set of central vertices in $\Gamma$.

\begin{proposition}
It is well known that the Power graphs of finite groups are connected.
\end{proposition}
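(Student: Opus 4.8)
The plan is to produce a single vertex that is adjacent to every other vertex and then route all paths through it. First I would let $e$ denote the identity of $G$ and observe that $\langle e\rangle=\{e\}$ is the trivial subgroup, hence $\langle e\rangle\leq\langle x\rangle$ for every $x\in G$. By the definition of the edge set of $\mathfrak{g}(G)$, this forces $\{e,x\}\in E(\mathfrak{g}(G))$ for every $x\neq e$; in other words, the identity is a universal vertex of the power graph.

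Next, given any two distinct vertices $u,v\in V(\mathfrak{g}(G))=G$, I would argue by cases. If one of $u,v$ equals $e$, they are already adjacent. Otherwise both differ from $e$, and $u-e-v$ is a path of length $2$ joining them. In either case $u$ and $v$ lie in the same connected component, so $\mathfrak{g}(G)$ is connected; the same argument in fact shows that $\mathrm{diam}(\mathfrak{g}(G))\leq 2$, which is the natural refinement to record.

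I do not anticipate any real obstacle here, which matches the proposition's phrasing that the fact is well known: the argument uses nothing beyond the existence of an identity element, so it applies verbatim to arbitrary (not necessarily finite) groups, and the finiteness hypothesis is never actually invoked. The only point deserving a word is the degenerate case $|G|=1$, where $\mathfrak{g}(G)$ is a single isolated vertex and is connected by convention.
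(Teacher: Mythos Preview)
Your argument is correct and essentially identical to the paper's: both observe that $\langle e\rangle\leq\langle g\rangle$ for all $g\in G$, making the identity a universal vertex, which immediately gives connectedness. Your additional remarks on the diameter bound and the trivial case are accurate and anticipate what the paper notes right after the proof.
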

\begin{proof}
Let $G$ be a group with power graph $\mathfrak{g}(G)$ and identity $e$.  Let $g$ be an arbitrary element of $G$.  Since $G$ is a group, $\langle e\rangle \leq \langle g\rangle$, so in the power graph of a group there is an edge between the identity and every other group element.
\end{proof}

Note that the distance between the identity and any other vertex is $1$, in the power graph of a group, giving the following corollary.

\begin{Corollary}
The radius of the power graph of a group is $1$, and the center of the power graph of a group of order $n$ is the set of vertices with degree $n-1$.
\end{Corollary}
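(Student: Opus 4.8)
The plan is to read off both assertions directly from the preceding Proposition and its Note, so the argument is essentially bookkeeping about eccentricities. First I would fix a finite group $G$ of order $n$ with identity $e$ and power graph $\mathfrak{g}(G)$, and assume $n \geq 2$ (the trivial case $n=1$ gives a single vertex of eccentricity $0$ and is best dispatched in a parenthetical remark, or simply excluded since the statement speaks of degree $n-1$ in a graph with more than one vertex). The Note already records that $d_{\mathfrak{g}(G)}(e,v) = 1$ for every $v \neq e$, so the eccentricity of $e$ is exactly $1$.

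Next I would argue that the radius equals $1$. Since the radius is the minimum eccentricity over all vertices and $e$ has eccentricity $1$, the radius is at most $1$. On the other hand, in any graph on at least two vertices every eccentricity is a positive integer (a vertex at distance $0$ from all others would force $|V| = 1$), so the radius is at least $1$. Hence the radius is $1$.

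For the description of the center, I would unwind the definitions: a vertex $v$ is central precisely when its eccentricity equals the radius, i.e.\ equals $1$, which says $d_{\mathfrak{g}(G)}(v,w) \leq 1$ for every $w \neq v$, i.e.\ $v$ is adjacent to all $n-1$ other vertices, i.e.\ $\deg(v) = n-1$. Conversely a vertex of degree $n-1$ is adjacent to everything else and so has eccentricity $1 = \mathrm{radius}$, hence lies in the center. This chain of equivalences identifies the center with the set of vertices of degree $n-1$.

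There is no real obstacle here; the only point requiring a moment's care is the degenerate case $|G| = 1$, where the graph has a single vertex of eccentricity $0$, so "radius $1$" and "degree $n-1 = 0$" must either be read as vacuously consistent or the statement tacitly assumes $n \geq 2$. I would note this explicitly and otherwise keep the proof to the two short paragraphs above.
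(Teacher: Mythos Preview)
Your proposal is correct and follows the same approach the paper intends: the paper does not supply a separate proof of this Corollary but simply observes in the preceding sentence that $d_{\mathfrak{g}(G)}(e,v)=1$ for every $v\neq e$ and declares the Corollary an immediate consequence. Your argument spells out exactly those inferences, with the added (and appropriate) care about the degenerate case $|G|=1$, which the paper leaves implicit.
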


The $\textit{center}$ of a group $G$ is the set of elements of $G$ which commute with all other elements of $G$.  The following proposition gives a relation between the center of a power graph of a group and the center of the group.

\begin{proposition}
Let $G$ be a group of order $n$ with power graph $\mathfrak{g}(G)$. The vertices in the center of the power graph $\mathfrak{g}(G)$ are in the center of the group $G$.
\end{proposition}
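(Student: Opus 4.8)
The plan is to lean on the Corollary just established, which says that the center of $\mathfrak{g}(G)$ consists precisely of the vertices of degree $n-1$. So let $v$ be a vertex lying in the center of $\mathfrak{g}(G)$; then $v$ is adjacent to every other vertex of $\mathfrak{g}(G)$. I want to show that $v$ lies in the center $Z(G)$ of the group, i.e.\ that $vg = gv$ for all $g \in G$, so I fix an arbitrary $g \in G$. If $g = v$ there is nothing to prove, so assume $g \neq v$.

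By the definition of the edge set of $\mathfrak{g}(G)$, the fact that $v$ and $g$ are adjacent means that $\langle v\rangle \leq \langle g\rangle$ or $\langle g\rangle \leq \langle v\rangle$. The key point is that in either case $v$ and $g$ both belong to a single cyclic subgroup of $G$: in the first case both lie in $\langle g\rangle$, and in the second both lie in $\langle v\rangle$. Since cyclic groups are abelian, $v$ and $g$ commute. As $g$ was arbitrary, $v$ commutes with every element of $G$, so $v \in Z(G)$, as desired.

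I do not anticipate a genuine obstacle here: once the Corollary identifies the relevant vertices as exactly those joined to all others, the argument reduces to the observation that comparability of two cyclic subgroups forces both generators into one abelian group. The only subtlety worth flagging is that the implication is strictly one-directional — a central element of $G$ need not be comparable to every other element (for instance in $\mathbb{Z}_2 \times \mathbb{Z}_2$ the whole group is abelian, yet the center of the power graph is only the identity) — so I would likely append a short remark noting that the converse fails in general.
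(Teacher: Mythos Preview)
Your proof is correct and follows essentially the same route as the paper: identify a central vertex of $\mathfrak{g}(G)$ as one of degree $n-1$, note that adjacency with every other element forces $\langle v\rangle \leq \langle g\rangle$ or $\langle g\rangle \leq \langle v\rangle$, and conclude that $v$ and $g$ commute. Your added remark that the converse fails is a nice touch not present in the paper, but the core argument is the same.
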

\begin{proof}
Let $g\in G$ and suppose $deg_{\mathfrak{g}(G)}(g)=n-1$.  Then for any other $h\in G$, either $\langle h\rangle \leq \langle g\rangle$, or $\langle g\rangle \leq \langle h\rangle$, either way $g$ and $h$ commute.  Hence, $g$ is in the center of $G$.
\end{proof}

Since power graphs of groups are connected and the identity is adjacent to every other element, there is a path of length at most $2$ between any two vertices of a power graph of a group, so the diameter of the power graph of a group is $2$, unless the power graph is complete, in which case it is $1$.

\begin{proposition}
Let $G$ be a non trivial Abelian group of order $n$.  The center of $\mathfrak{g}(G)$ has cardinality
\begin{enumerate}
    \item $\phi(n)+1$, if $G$ is cyclic and $n\neq p^k$ for any prime $p$ and positive integer $k$.
    \item $n$, if $G$ is cyclic and $n=p^k$ for some prime $p$ and positive integer $k$.
    \item $1$, if $G$ is non-cyclic.
\end{enumerate}
\end{proposition}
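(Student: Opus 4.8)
The plan is to lean on the characterization recorded in the corollary above: a vertex $g$ lies in the center of $\mathfrak{g}(G)$ precisely when $\deg_{\mathfrak{g}(G)}(g)=n-1$, that is, when the cyclic subgroup $\langle g\rangle$ is comparable under inclusion with $\langle h\rangle$ for every $h\in G$. Since $\{\langle h\rangle : h\in G\}$ is exactly the collection of cyclic subgroups of $G$, the problem becomes: which elements generate a cyclic subgroup that is comparable to every cyclic subgroup of $G$? I would treat the cyclic and the non-cyclic cases separately.

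In the cyclic case, identify $G$ with $\mathbb{Z}_n$. Its cyclic subgroups correspond, in an order-preserving way, to the divisors of $n$ ordered by divisibility, the subgroup attached to $d\mid n$ having order $d$ and being generated by exactly $\phi(d)$ of its elements. Hence $g$ is central iff $d:=|\langle g\rangle|$ is comparable under divisibility with every divisor of $n$. If $n=p^k$, the divisor poset is the chain $1\mid p\mid\cdots\mid p^k$, so every $d$ is comparable with every other, and summing over all divisors the center has $\sum_{d\mid n}\phi(d)=n$ elements. If instead $n$ has at least two distinct prime factors, I would show that only $d=1$ and $d=n$ are comparable with all divisors: for any $d$ with $1<d<n$ one can produce a divisor $e$ of $n$ with $d\nmid e$ and $e\nmid d$ by raising the exponent of one prime appearing in $n$ while lowering that of another in the factorization of $d$ (the hypothesis that $n$ is not a prime power guarantees enough room, after a short case check on whether the prime available to increase and the prime available to decrease are forced to coincide). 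Counting the generators of the subgroups of orders $1$ and $n$ then gives $\phi(1)+\phi(n)=\phi(n)+1$.

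For the non-cyclic abelian case, I would invoke the standard fact that a finite abelian group with at most one subgroup of order $p$ for each prime $p$ is cyclic; consequently a non-cyclic abelian $G$ has two distinct subgroups $P_1\neq P_2$ of some prime order $p$. Given any $g\in G$ with $g\neq e$, the cyclic group $\langle g\rangle$ contains at most one subgroup of order $p$, so it does not contain at least one of $P_1,P_2$, say $P_1\not\subseteq\langle g\rangle$. Then $\langle g\rangle$ and $P_1$ are incomparable: $P_1\subseteq\langle g\rangle$ fails by choice, while $\langle g\rangle\subseteq P_1$ would force $\langle g\rangle$ to have order $1$ or $p$, hence (as $g\neq e$) $\langle g\rangle=P_1$, contrary to $P_1\not\subseteq\langle g\rangle$. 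Thus $\deg_{\mathfrak{g}(G)}(g)<n-1$, so $g$ is not central; since the identity is always central, the center has exactly one element.

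The step I expect to demand the most care is the elementary number-theoretic claim in the cyclic non-prime-power subcase — exhibiting, for every intermediate divisor $d$ of $n$, a divisor incomparable to it — because the natural construction of bumping one prime exponent up and another down must be adjusted when those two primes are the same, which is exactly the point at which the existence of a second prime factor of $n$ is used. The remaining ingredients — the order isomorphism between cyclic subgroups of $\mathbb{Z}_n$ and divisors of $n$, the identity $\sum_{d\mid n}\phi(d)=n$, and the cited structural fact for finite abelian groups — are routine.
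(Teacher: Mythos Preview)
Your argument is correct. For the two cyclic cases your reformulation via the divisor lattice of $n$ is essentially the paper's approach, though you spell out more carefully why an intermediate divisor $1<d<n$ of a non-prime-power $n$ admits an incomparable divisor, whereas the paper simply asserts the existence of an element of such an order. (Incidentally, the ``same prime'' obstruction you flag cannot actually occur: if the only index at which the exponent of $d$ can be raised coincided with the only index at which it can be lowered, every other prime exponent in $d$ would be simultaneously zero and maximal, forcing $n$ to be a prime power.) The genuine divergence is in case~(3). The paper defers to the composition-series apparatus of the next section: it shows that in a non-cyclic abelian group the top stratum $X_i$ is not a clique, then uses the implication ``$X_i$ a clique $\Rightarrow X_{i+1}$ a clique'' in contrapositive to conclude that no $X_i$ is a clique, whence no non-identity vertex is adjacent to everything. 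Your route---a non-cyclic finite abelian group has two distinct subgroups of some prime order $p$, and any nontrivial cyclic subgroup, having at most one subgroup of order $p$, must be incomparable with one of them---is shorter and entirely self-contained, at the price of not feeding into the $X_i$ framework that the paper develops for its later results.
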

\begin{proof}$\phantom e$
\begin{enumerate}
\item[(1):] First let $G$ be a cyclic group whose order is $n$, and suppose $n$ is not a power of a prime.  Note, all the $\phi(n)$ many generators of $G$ and the identity are in the center of $\mathfrak{g}(G)$, so the center has size at least $\phi(n)+1$.  Since $n$ is not a power of a prime, $n$ has at least two distinct prime factors. Consider any $b\in G$, where $b$ is neither a generator of $G$, nor the identity of $G$. So, if order of $b$ is $m$, then $m<n$. Hence, by converse of Lagrange's Theorem over finite Abelian group, there exists $c\in G$, where order of $c$ is $r$ and $r$ does not divide $m$, $m$ does not divide $r$. So, there is no edge between $b$ and $c$. Since $b$ is an arbitrary non-generator of $G$, which is not the identity of $G$ either, the cardinality result  follows.

\item[(2):] Now suppose $G$ is cyclic and it's order is $n=p^k$ for some prime $p$ and positive integer $k$. Then, the graph being complete, by \cite{power semi}, the center has cardinality $n$.
    
\item[(3):] Let, $G$ be not cyclic. Then, the result follows by the proposition \ref{prop 8} in the next section.
\end{enumerate}
\end{proof}
\section{Some results relating Composition series in power graphs}

     The Jordan-H\"{o}lder theorem asserts that any two composition series of a given group have the same length, and isomorphic factor groups up to permutation.  The length of a composition series of a group $G$ is called its \textit{composition length} and denoted as $\ell$(G). The composition length $\ell(G)$ is also the maximum length a normal series of $G$ can have.  Then a group $G$ can be partitioned in the following way, let $X_i$ be the set of all elements of $G$ which generate a cyclic subgroup with composition length $i$. Then $G=\bigcup_i X_i$, and $X_i\cap X_j=\varnothing$ for any $i\neq j$.
    
    \begin{Lemma}
    The composition length of an Abelian group is the sum of the exponents of the order of the group, when written as a product of prime numbers.
    \end{Lemma}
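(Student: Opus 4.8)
The plan is to reduce everything to the structure of the composition factors of a finite abelian group. The first step is to record the elementary fact that a finite simple abelian group is isomorphic to $\mathbb{Z}_p$ for some prime $p$: since every subgroup of an abelian group is normal, a simple abelian group has no proper nontrivial subgroups, and therefore any nontrivial element generates the whole group; if the order of that element were composite it would produce a proper nontrivial cyclic subgroup, a contradiction, so the order is prime.

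Next I would take an arbitrary composition series $1 = G_0 \trianglelefteq G_1 \trianglelefteq \cdots \trianglelefteq G_\ell = G$ of the abelian group $G$, where $\ell = \ell(G)$ by Jordan--H\"older. Each successive quotient $G_i/G_{i-1}$ is a simple abelian group, hence of prime order by the first step. Iterating Lagrange's theorem along the series gives $|G| = \prod_{i=1}^{\ell} |G_i/G_{i-1}|$, so $|G|$ is expressed as a product of exactly $\ell$ primes. By uniqueness of prime factorization, $\ell$ must equal the number of prime factors of $|G|$ counted with multiplicity; writing $|G| = p_1^{a_1}\cdots p_k^{a_k}$ this number is precisely $a_1 + \cdots + a_k$, which is the claimed sum of exponents.

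An equivalent route, which I would likely present as the main argument, is induction on $|G|$. The base case $|G| = 1$ is immediate since then $\ell(G) = 0$ and the empty product has no exponents to sum. For $|G| > 1$, Cauchy's theorem provides an element of prime order $p$, generating a subgroup $N$ of order $p$ that is automatically normal; then $\ell(N) = 1$, and by applying Jordan--H\"older to a composition series refining $1 \trianglelefteq N \trianglelefteq G$ we get $\ell(G) = \ell(N) + \ell(G/N) = 1 + \ell(G/N)$. Since $G/N$ is abelian of order $|G|/p$, the inductive hypothesis gives $\ell(G/N)$ as the sum of the exponents of $|G|/p$, and adding $1$ recovers the sum of the exponents of $|G|$.

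The only genuine content here is the identification of finite simple abelian groups with the groups $\mathbb{Z}_p$; that is the step I would spell out most carefully. Everything after it is bookkeeping with orders of subgroups and quotients together with the Jordan--H\"older theorem, both already available, so I do not anticipate any real obstacle beyond stating that initial observation cleanly.
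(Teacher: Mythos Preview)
Your proposal is correct and follows essentially the same approach as the paper: both arguments hinge on the observation that a simple abelian group has prime order, so each composition factor contributes a single prime to the factorization of $|G|$, and counting these factors yields the sum of the exponents. Your write-up is considerably more careful than the paper's terse version, and your alternative inductive argument via Cauchy's theorem is a nice addition not present there, but the underlying idea is the same.
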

    \begin{proof}
    An Abelian group is simple only if its order is prime.  Then if $\lvert G\rvert=p_1^{k_1}p_2^{k_2}\cdots p_n^{k_n}$, then each $G_{i}/G_{i+1}$ must have prime order, that is $\lvert G_{i}\rvert=p_i\lvert G_{i+1}\rvert$.  Then each $G_{i-1}$ has an order whose exponents sum to one less than those in the order of $G_i$, so there must be $k_1+k_2+\cdots+k_n$ inclusions in a maximal length normal series of an Abelian group.
    \end{proof}
    \begin{proposition}
    If $X_i$ is non-empty, then $X_{i-1}$ is non-empty.
    \end{proposition}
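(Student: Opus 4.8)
The plan is to start from a single witness $g\in X_i$ and manufacture a witness for $X_{i-1}$ by replacing $g$ with a suitable power of itself. So I would fix $g\in X_i$ and set $m=\lvert\langle g\rangle\rvert$. The subgroup $\langle g\rangle$ is cyclic, hence abelian, so the preceding Lemma applies to it directly and tells us that $i=\ell(\langle g\rangle)$ equals the sum of the exponents in the prime factorization of $m$. (Here I would state explicitly that the proposition is meant for $i\ge 1$; the only element of composition length $0$ is the identity, so $X_0\neq\varnothing$ always, while $X_{-1}=\varnothing$, and for $i\ge 1$ the argument below goes through.)

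Since $i\ge 1$ we have $m>1$, so I can pick a prime $p$ dividing $m$ and put $h=g^{p}$. Then $\langle h\rangle$ is the unique subgroup of $\langle g\rangle$ of order $m/p$, and the exponent sum of the prime factorization of $m/p$ is exactly one less than that of $m$. Applying the Lemma a second time, now to the cyclic group $\langle h\rangle$, gives $\ell(\langle h\rangle)=i-1$, i.e.\ $h\in X_{i-1}$. Hence $X_{i-1}\neq\varnothing$, which is what we want. Note this works for an arbitrary finite group $G$, abelian or not, because the only group to which the Lemma is ever applied is the cyclic group generated by a single element.

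I do not expect a genuine obstacle: the whole content is the elementary fact that passing from a finite cyclic group to a maximal (index-prime) subgroup lowers the composition length by exactly one, which is immediate from the Lemma. The only two places that deserve a sentence of care are the boundary value $i=0$ (handled by the convention above) and the remark that $\langle g\rangle$ need not be contained in an abelian group, so one must invoke the Lemma for $\langle g\rangle$ and $\langle h\rangle$ themselves rather than for $G$.
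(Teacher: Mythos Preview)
Your proof is correct and produces the same witness as the paper: an element generating a maximal (prime-index) subgroup of $\langle g\rangle$. The only difference is in how you verify that this witness lies in $X_{i-1}$. You invoke the preceding Lemma twice to compute composition lengths numerically via exponent sums, whereas the paper argues directly from the composition series: it takes a composition series $\langle x\rangle\supset\langle x_{i-1}\rangle\supset\cdots\supset\{e\}$, observes that truncating the first term gives a normal series of length $i-1$ from $\langle x_{i-1}\rangle$, and then rules out any longer normal series by contradiction (extending it by $\langle x\rangle$ would give a normal series of length $>i$ from $\langle x\rangle$). Your route is a bit quicker once the Lemma is in hand; the paper's route is more self-contained and makes the later Corollary (that truncating a composition series leaves a composition series) fall out immediately.
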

    \begin{proof}
    Let $x\in X_i$ and $\langle x\rangle\supset \langle x_{i-1}\rangle\supset\cdots\langle x_2\rangle \supset \{e\}$ be a composition series.  Then there is a normal series of length $i-1$ from $\langle x_{i-1}\rangle$. Suppose there were a normal series of length $i$ or greater from $\langle x_{i-1}\rangle$, then $\langle x\rangle \supset \langle x_{i-1}\rangle \supset\cdots\supset \{e\}$ is a normal series from $\langle x\rangle$ of length greater than $i$, contradicting the assumption that $x\in X_i$.  
    \end{proof}
    \begin{Corollary}
    Removing subgroups from the beginning of a composition series of length greater than zero from a cyclic subgroup leaves a composition series.
    \end{Corollary}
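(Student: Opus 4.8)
The plan is to argue directly from the definition of a composition series, together with the elementary fact that every subgroup of a cyclic group is again cyclic. Write a composition series of a cyclic subgroup $\langle x\rangle$ of length $\ell\geq 1$ as
\[
\langle x\rangle = G_0 \supsetneq G_1 \supsetneq \cdots \supsetneq G_\ell = \{e\},
\]
so that each factor $G_i/G_{i+1}$ is simple. Removing the first term leaves the chain $G_1 \supsetneq G_2 \supsetneq \cdots \supsetneq G_\ell = \{e\}$, and the claim is that this is a composition series of $G_1$.

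First I would observe that $G_1$, being a subgroup of the cyclic group $\langle x\rangle$, is itself cyclic, so the truncated series is indeed a series ``from a cyclic subgroup'' as in the statement. Next, the consecutive factors $G_i/G_{i+1}$ for $1\leq i\leq \ell-1$ are literally unchanged from the original series, hence still simple; so the truncated chain is a composition series of the cyclic group $G_1$, now of length $\ell-1$. The hypothesis $\ell\geq 1$ is exactly what guarantees there is a subgroup to delete and that at least the term $\{e\}$ survives, so the truncation is always meaningful. For the statement as phrased, where possibly several subgroups are stripped from the beginning, I would then iterate this single step: as long as the remaining chain has positive length it is again a composition series of a (smaller) cyclic subgroup, so an easy induction on the number of subgroups removed finishes the argument.

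I do not expect a genuine obstacle here; the only point that needs any attention is confirming that the two hypotheses — ``of length greater than zero'' and ``from a cyclic subgroup'' — are precisely what keep each truncation step legitimate and leave the conclusion in the same form (a composition series of a cyclic group), which is why the corollary is stated for cyclic subgroups rather than arbitrary ones.
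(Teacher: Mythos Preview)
Your argument is correct. In the paper the corollary is left unproved and is meant to be read off the proof of the preceding proposition: there one shows that truncating a length-$i$ composition series of $\langle x\rangle$ yields a normal series of length $i-1$ from $\langle x_{i-1}\rangle$, and that no normal series from $\langle x_{i-1}\rangle$ can have length $\geq i$ (else one could extend to contradict $x\in X_i$); hence the truncated series is a \emph{maximal} normal series, i.e.\ a composition series. Your route is different and more elementary: you simply observe that deleting the top term leaves the remaining consecutive factors $G_i/G_{i+1}$ literally unchanged, hence still simple, so the truncated chain is a composition series by definition, and then iterate. Your approach avoids any appeal to maximality or Jordan--H\"older and makes transparent why the ``length greater than zero'' and ``cyclic subgroup'' hypotheses are exactly what is needed to keep each truncation in the same setting; the paper's approach, by contrast, ties the corollary to the $X_i$ machinery being developed.
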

    \begin{proposition}
    If some vertex in $X_i$ is adjacent to all other vertices in $X_i$, then $X_i$ is a clique in $\mathfrak{g}(G)$.  
    \end{proposition}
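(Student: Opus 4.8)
The plan is to show that inside $X_i$ the edge relation of $\mathfrak{g}(G)$ is as rigid as possible: two distinct elements of $X_i$ are adjacent in $\mathfrak{g}(G)$ precisely when they generate the \emph{same} cyclic subgroup. First I would take $x,y\in X_i$ that are adjacent, so that after possibly interchanging them we have $\langle x\rangle\le\langle y\rangle$. Both $\langle x\rangle$ and $\langle y\rangle$ are cyclic, hence Abelian, so by the Lemma computing the composition length of an Abelian group as the sum of the exponents in the prime factorization of its order, the composition lengths of $\langle x\rangle$ and $\langle y\rangle$ are those exponent sums; by the definition of $X_i$ both sums equal $i$. If the inclusion $\langle x\rangle\le\langle y\rangle$ were proper, then $|\langle x\rangle|$ would be a proper divisor of $|\langle y\rangle|$, whence its exponent sum would be strictly smaller, giving $i<i$, a contradiction. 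Therefore $\langle x\rangle=\langle y\rangle$. The converse is immediate: if $\langle x\rangle=\langle y\rangle$ and $x\ne y$, then $\langle x\rangle\le\langle y\rangle$, so $\{x,y\}$ is an edge of $\mathfrak{g}(G)$.

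With this equivalence established, the proposition follows quickly. ``Generating the same cyclic subgroup'' is an equivalence relation on $X_i$, and by the previous paragraph the subgraph of $\mathfrak{g}(G)$ induced on $X_i$ is exactly the disjoint union of these classes, each class being a clique and no edges running between distinct classes. So if some $v\in X_i$ is adjacent to every other vertex of $X_i$, then every $w\in X_i$ lies in the class of $v$, i.e.\ $\langle w\rangle=\langle v\rangle$. Hence for any two distinct $w_1,w_2\in X_i$ we get $\langle w_1\rangle=\langle v\rangle=\langle w_2\rangle$, so $w_1$ and $w_2$ are adjacent; thus $X_i$ induces a complete subgraph of $\mathfrak{g}(G)$, i.e.\ $X_i$ is a clique.

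The only step carrying real content is the first one — the observation that a proper inclusion of cyclic subgroups strictly decreases composition length — and that is exactly where the preceding Lemma is invoked. Everything after that is bookkeeping, so I do not anticipate a genuine obstacle; the one point to be careful about is to phrase the rigidity statement as an ``if and only if'' so that the equivalence-relation argument in the second paragraph is legitimate.
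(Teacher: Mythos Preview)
Your proof is correct and follows essentially the same strategy as the paper: both arguments reduce to the key observation that two adjacent elements of $X_i$ must generate the \emph{same} cyclic subgroup, and then conclude that a vertex adjacent to all of $X_i$ forces everyone into one equivalence class. The only minor difference is in how the contradiction for a proper inclusion is obtained---the paper extends a composition series of $\langle x_2\rangle$ by prepending $\langle x_1\rangle$ to get a normal series of length $i+1$, whereas you invoke the exponent-sum formula from the preceding Lemma; these are two phrasings of the same fact.
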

    \begin{proof}
     
    If $x_1,x_2\in X_i$ and they are adjacent in $\mathfrak{g}(G)$, then either $\langle x_1\rangle \leq \langle x_2\rangle$ or $\langle x_2\rangle \leq \langle x_1\rangle$.  Without loss of generality suppose $\langle x_2\rangle$ is a proper subgroup of $\langle x_1\rangle$.  Then there exists a composition series $\langle x_2\rangle\supset \langle x_3\rangle\supset\cdots\supset\{e\}$ of length $i$ and $\langle x_1\rangle\supset \langle x_2\rangle\supset\cdots\supset\{e\}$ is a normal series of length $i+1$, contradicting the assumption that $x_1\in X_i$, so it must be the case that $\langle x_2\rangle=\langle x_1\rangle$.  Then, if any element of $X_i$ is adjacent to all other elements of $X_i$ in $\mathfrak{g}(G)$, then all elements of $X_i$ generate the same subgroup and form a clique in $\mathfrak{g}(G).$
    \end{proof}
    \begin{proposition}
    If $X_i$ is a clique in $\mathfrak{g}(G)$, then $X_{i+1}$ is a clique in $\mathfrak{g}(G)$.
    \end{proposition}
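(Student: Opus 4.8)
The plan is to reduce the claim, via the preceding proposition, to showing that every element of $X_{i+1}$ generates one and the same cyclic subgroup of $G$; once that is known, any two distinct elements of $X_{i+1}$ are adjacent in $\mathfrak{g}(G)$, so $X_{i+1}$ is trivially a clique. If $X_{i+1}=\varnothing$ there is nothing to prove, so assume $X_{i+1}\neq\varnothing$; by the proposition that a non-empty $X_j$ forces $X_{j-1}$ non-empty, applied with $j=i+1$, we get $X_i\neq\varnothing$. Since $X_i$ is a clique, the argument in the proof of the preceding proposition shows that all elements of $X_i$ generate a single subgroup $H$ with $\ell(H)=i$; equivalently, $H$ is the unique cyclic subgroup of $G$ of composition length $i$.

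The next step is to pin down the shape of $\langle x\rangle$ for $x\in X_{i+1}$. Let $M$ be any maximal subgroup of the cyclic group $\langle x\rangle$; then $M$ has prime index, say $|M|=|\langle x\rangle|/q$ with $q$ prime, so by the Lemma above (the composition length of an abelian group is the sum of the exponents of its order) we get $\ell(M)=\ell(\langle x\rangle)-1=i$. Hence every generator of $M$ lies in $X_i$, which forces $M=H$. As $M$ was an arbitrary maximal subgroup, $\langle x\rangle$ has a unique maximal subgroup, and a cyclic group with a unique maximal subgroup has prime-power order; therefore $|\langle x\rangle|=p^{i+1}$ for some prime $p$, $|H|=p^{i}$, and (for $i\ge 1$) $p$ is the unique prime dividing $|H|$, so this prime $p$ is the same for every $x\in X_{i+1}$.

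It remains to show that any two $x_1,x_2\in X_{i+1}$ generate the same subgroup. Both $\langle x_1\rangle$ and $\langle x_2\rangle$ are cyclic of order $p^{i+1}$ with unique maximal subgroup $H$, so $\langle x_1\rangle\cap\langle x_2\rangle$ is either all of $\langle x_1\rangle=\langle x_2\rangle$ (and we are done) or exactly $H$. In the latter case I would pass to $K=\langle x_1,x_2\rangle$: it contains two distinct cyclic subgroups of order $p^{i+1}$ meeting in $H$, so $K$ is non-cyclic, and such a $K$ must contain at least two distinct cyclic subgroups of order $p^{i}$. Their generators lie in $X_i$ but generate different subgroups, contradicting that $X_i$ is a clique; hence $\langle x_1\rangle=\langle x_2\rangle$, and $X_{i+1}$ is a clique.

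The composition-length bookkeeping of the first two paragraphs is routine; the main obstacle is the last step, namely ruling out two distinct cyclic subgroups of order $p^{i+1}$ sharing the maximal subgroup $H$. This is transparent when $G$ is abelian, since then $K$ is a non-cyclic abelian $p$-group containing an element of order $p^{i+1}$, and such a group manifestly has more than one cyclic subgroup of order $p^{i}$. It is precisely here that a lemma on the subgroup lattice of $G$ around $H$ (or a standing hypothesis that makes $G$ sufficiently ``lattice-rigid'', abelian being enough) is needed, and I would formulate and prove that lemma separately before assembling the argument.
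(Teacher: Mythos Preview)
Your route is different from the paper's and considerably more careful. The paper argues by contradiction: assume $X_{i+1}$ is not a clique, pick non-adjacent $y,z\in X_{i+1}$, and then simply asserts that $|y|=p\,|x|$ and $|z|=q\,|x|$ for \emph{distinct} primes $p\neq q$; from $p\neq q$ it manufactures two cyclic subgroups of composition length $i$ (one inside $\langle y\rangle$, one inside $\langle z\rangle$) with different orders, contradicting that $X_i$ is a clique. Your second paragraph actually shows why this assertion is unsupportable for $i\ge 1$: once $X_i$ is a clique, the common subgroup $H$ has order $p^{i}$ and every element of $X_{i+1}$ has order $p^{i+1}$ for that \emph{same} prime $p$, so $p=q$ is forced. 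In other words, the very step the paper takes for granted is exactly the step your analysis rules out; the paper's argument never gets off the ground in the interesting case.

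Your instinct that an extra hypothesis is needed is correct, and the ``lattice-rigidity'' lemma you propose cannot be proved for arbitrary $G$: take $G=Q_8$. Here $X_1=\{-1\}$ is a (one-element) clique, while $X_2=\{\pm i,\pm j,\pm k\}$ is not, precisely because $\langle i\rangle$ and $\langle j\rangle$ are distinct cyclic subgroups of order $4$ sharing the maximal subgroup $\{\pm 1\}=H$. So the proposition is false as stated; under an abelian hypothesis your argument in the third paragraph goes through exactly as you outline. One more point you should make explicit rather than parenthetical: even for abelian $G$ the statement fails at $i=0$ (e.g.\ $G=\mathbb{Z}_6$ has $X_0=\{0\}$ a clique but $X_1=\{2,3,4\}$ not), so the correct hypothesis is ``$G$ abelian and $i\ge 1$''. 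Your remark ``(for $i\ge 1$)'' already pinpoints where the uniqueness of $p$ breaks down; promote it to an assumption.
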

    \begin{proof}
    Suppose $X_i$ is a clique in $\mathfrak{g}(G)$ and $X_{i+1}$ is not a clique in $\mathfrak{g}(G)$, and let $x\in X_i$.  Then there exist two distinct elements in $X_{i+1}$, say $y$ and $z$, with orders $p\lvert x\rvert$ and $q\lvert x\rvert$ respectively where $p$ and $q$ are prime and $p\neq q$.  Write $\lvert x\rvert=p_1^{k_1}p_2^{k_2}\cdots p_n^{k_n}$, with $\sum_{t=1}^n k_t=i$.  Then $\langle y\rangle$ has a subgroup of order $pp_1^{k_1-1}p_2^{k_2}\cdots p_n^{k_n}$, and $\langle z\rangle$ has a subgroup of order $qp_1^{k_1-1}p_2^{k_2}\cdots p_n^{k_n}$.  Both of these subgroups are in $X_i$, but since $p\neq q$ at least one of the subgroups is not equal to $\langle x\rangle$, contradicting the assumption that $X_i$ is a clique in $\mathfrak{g}(G)$.
    \end{proof}
    \begin{proposition}
    If $X_i$ is a clique in $\mathfrak{g}(G)$ and $x\in X_i$ and $\langle x\rangle \neq p^k$ for some prime $p$ and positive integer $k$, then $X_{i+1}$ is empty.
    \end{proposition}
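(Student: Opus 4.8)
The plan is to argue by contradiction, using the observation that a clique $X_i$ forces all of its members to generate one and the same cyclic subgroup. First I would extract that fact from the proof of the earlier proposition (a vertex of $X_i$ adjacent to all the others makes $X_i$ a clique): if $X_i$ is a clique, then any two of its elements are adjacent, hence generate equal subgroups, so every element of $X_i$ generates $H:=\langle x\rangle$. Write $m=|H|$; since $m$ is not a prime power we may write $m=p_1^{k_1}\cdots p_n^{k_n}$ with $n\ge 2$, and by the Lemma on the composition length of abelian groups, $k_1+\cdots+k_n=i$.

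Next I would assume $X_{i+1}\neq\varnothing$ and take $y\in X_{i+1}$, so that $\langle y\rangle$ is cyclic of composition length $i+1$. Deleting the first term of a composition series of $\langle y\rangle$ — legitimate by the Corollary, since the series has length $i+1>0$ — yields a composition series of length $i$ of a proper subgroup $K$ of $\langle y\rangle$, so $\ell(K)=i$ and every generator of $K$ lies in $X_i$; the clique property then forces $K=H$. As the top factor $\langle y\rangle/K$ is simple abelian, it has prime order $q$, whence $|\langle y\rangle|=qm$.

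The heart of the argument is to exhibit inside $\langle y\rangle$ a second cyclic subgroup of composition length $i$ different from $H$, and this is exactly where the hypothesis that $m$ is not a prime power is used. I would pick a prime $p\mid m$ with $p\neq q$; such $p$ exists because if $q\nmid m$ any prime divisor of $m$ works, while if $q\mid m$ then $m$, not being a prime power, still has a prime divisor other than $q$. Since $\langle y\rangle$ is cyclic of order $qm$, it contains a subgroup $L$ of order $qm/p$; by the Lemma $\ell(L)=i$, so the generators of $L$ lie in $X_i$ and hence $L=H$ by the clique property. But $|L|=qm/p\neq m=|H|$ because $p\neq q$, a contradiction; therefore $X_{i+1}=\varnothing$.

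The only delicate point I anticipate is the short case analysis guaranteeing the auxiliary prime $p$ (and the inequality $qm/p\neq m$); everything else reduces, via the Lemma and the Corollary, to counting prime factors of divisors of $|\langle y\rangle|$, once the clique-implies-common-subgroup remark is in hand.
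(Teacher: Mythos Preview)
Your proof is correct and follows essentially the same route as the paper: both arguments exhibit, inside $\langle y\rangle$, two cyclic subgroups of composition length $i$ with different orders (namely $|y|/q$ and $|y|/p$ for distinct primes $p,q$), contradicting the fact that a clique $X_i$ forces a unique such subgroup. Your write-up is in fact more careful than the paper's, which asserts $pq\mid |y|$ without justifying why $\langle x\rangle\le\langle y\rangle$; your use of the composition-series Corollary to extract $K=H$ makes that step explicit.
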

    \begin{proof}
    Suppose $x\in X_i$, $\langle x\rangle \neq p^k$, so $\lvert x\rvert$ has at least two distinct prime factors, say $p$ and $q$.  Suppose there exists a $y\in X_{i+1}$, then $pq \mid \lvert y\rvert$, but then $\lvert y\rvert=pqm$ for some $m$ (possible equal to $p$ or $q$), and in that case $\langle y\rangle$ has subgroups of orders $pm$ and $qm$, both in $X_i$, contradicting the assumption that $X_i$ is a clique in $\mathfrak{g}(G)$.
    \end{proof}
    \begin{proposition}\label{prop 8}    If $G$ is a finite Abelian group and not cyclic, then $max_iX_i$ is not a clique in $\mathfrak{g}(G)$. 
    \end{proposition}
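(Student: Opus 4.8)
The plan is to read $\max_i X_i$ as $X_M$, where $M$ is the largest index for which $X_M\neq\varnothing$ (the nonempty $X_i$ are $X_1,\dots,X_M$ by an earlier proposition), and then to exhibit two non-adjacent vertices inside $X_M$.

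First I would identify $X_M$ explicitly. Let $m$ be the largest order of an element of $G$. In a finite Abelian group every element order divides $m$, and by the Lemma the composition length of $\langle x\rangle$ equals the number of prime factors of $\lvert x\rvert$ counted with multiplicity; hence this number is at most the same count for $m$, with equality exactly when $\lvert x\rvert=m$. Consequently $M$ equals the number of prime factors of $m$, and $X_M=\{x\in G:\lvert x\rvert=m\}$.

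Next comes the key reduction. If $x,y\in X_M$ were adjacent in $\mathfrak g(G)$, then without loss of generality $\langle x\rangle\le\langle y\rangle$; but $\lvert\langle x\rangle\rvert=\lvert\langle y\rangle\rvert=m$ forces $\langle x\rangle=\langle y\rangle$. So if $X_M$ were a clique, all its elements would generate one and the same cyclic subgroup of order $m$, i.e. $G$ would have a unique cyclic subgroup of order $m$. It therefore suffices to produce two elements of order $m$ generating distinct subgroups. Using the invariant factor decomposition $G\cong\mathbb Z_{d_1}\times\cdots\times\mathbb Z_{d_r}$ with $1<d_1\mid d_2\mid\cdots\mid d_r=m$ and $r\ge 2$ (as $G$ is not cyclic), I would take $a=(0,\dots,0,1)$ and $b=(0,\dots,0,1,1)$. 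Then $\lvert a\rvert=d_r=m$ and $\lvert b\rvert=\operatorname{lcm}(d_{r-1},d_r)=m$, so $a,b\in X_M$, while $b\notin\langle a\rangle$ because its $(r-1)$-st coordinate is nonzero whereas every element of $\langle a\rangle$ has $(r-1)$-st coordinate $0$; since $\langle a\rangle$ and $\langle b\rangle$ have the same order, neither contains the other, so $a$ and $b$ are non-adjacent. Hence $X_M$ is not a clique. (Combined with the propagation propositions above, this is exactly what is needed for case $(3)$ of the earlier proposition on centers of power graphs.)

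The only point I would take care over is the identification $X_M=\{x:\lvert x\rvert=m\}$ — that maximal composition length corresponds to maximal element order — which rests on every element order dividing the maximal element order in an Abelian group; after that, the argument is just the one-line "equal orders force equal subgroups" observation together with the structure theorem, so no genuine obstacle remains. One could even avoid the structure theorem, since the cyclic subgroup of maximal order is a direct factor of a finite Abelian group, which produces $a$ and $b$ just as directly.
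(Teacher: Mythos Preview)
Your proof is correct and follows essentially the same route as the paper: use the structure theorem for finite Abelian groups to produce two elements of maximal order generating distinct cyclic subgroups, hence non-adjacent in $\mathfrak g(G)$. You use the invariant-factor decomposition and are more careful than the paper in identifying $X_M$ with the set of elements of maximal order (the paper simply asserts its two chosen elements have maximal order and lie in $\max_i X_i$), but the core idea is identical.
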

    \begin{proof}
    Since $G$ is a finite non-cyclic Abelian group, $G\cong \mathbb{Z}_{p_1}^{k_1}\times\mathbb{Z}_{p_2}^{k_2}\times\cdots\times\mathbb{Z}_{p_n}^{k_n}$, where not all of the $p_{i}'s$ are distinct.  Then, $G\cong \mathbb{Z}_m\times\mathbb{Z}_{p_i}^{k_i}\times\cdots\times\mathbb{Z}_{p_t}^{k_t}$, where $p_i^{k_i}$ $\mid$ $m,\cdots$ $p_t^{k_t}$ $\mid$ $m$, and $m=q_1^{l_1}q_2^{l_2}\cdots q_r^{l_r}$ where $q_1,q_2,\cdots,q_r$ are relatively prime.  Notice that $g=(1,0,\cdots,0)$ and $g'=(1,1,0,\cdots,0)$ both have the maximum possible order in $G$, that is $m$. Also, notice that $\langle g\rangle \neq \langle g'\rangle$, so $g$ and $g'$ are both in $max_iX_i$ but not adjacent to each other, so $max_iX_i$ is not a clique in $\mathfrak{g}(G)$.
    \end{proof}
    Since $max_iX_i$ is not a clique in $\mathfrak{g}(G)$, then no $X_i$ is a clique in $\mathfrak{g}(G)$.  Since $G=\bigcup_iX_i$, no non-identity vertex in $\mathfrak{g}(G)$ has degree $n-1$.
\section{Perfect Graphs}
A \textit{clique} in a graph $\Gamma$ is a subset $C$ of the vertices of $\Gamma$, such that any two vertices in $C$ are adjacent. The size of the largest clique in $\Gamma$ is called the \textit{clique number} of $\Gamma$, and is denoted $\omega(\Gamma)$.

A \textit{vertex coloring} of a graph is an assignment of labels (called colors) to the vertices of a graph $\Gamma$ such that no two adjacent vertices share the same color.  The smallest number of colors which can be used in a coloring of $\Gamma$ is called the \textit{chromatic number} of $\Gamma$, and is denoted $\chi(\Gamma)$. 

A graph $\Gamma$ is called \textit{perfect} if for every induced subgraph $\Gamma_i$ of $\Gamma$, $\omega(\Gamma_i)=\chi(\Gamma_i).$ It was conjectured by Berge in 1961, and was proved by Chudnovsky et. al. in \cite{PerfectGraphs} that graphs are perfect if and only if they contain no holes or anti-holes of odd length greater than $3$. A \textit{hole} in a graph is a cycle such that no two vertices in the cycle are joined by an edge which does not itself belong to the cycle.  An anti-hole is the edge complement of a hole.

\begin{Theorem}[Strong Perfect Graph Theorem]
    A graph is perfect if and only if it contains no odd holes or odd anti-holes of length greater than 3.
\end{Theorem}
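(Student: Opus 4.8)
The plan is not to give a self-contained proof: the statement is the Strong Perfect Graph Theorem of Chudnovsky, Robertson, Seymour and Thomas, whose proof in \cite{PerfectGraphs} runs to roughly one hundred and eighty pages. What I can do here is cite that paper and sketch the shape of the argument, since a complete treatment is entirely outside the scope of this article.

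First I would dispatch the easy direction. If $\Gamma$ has an induced odd hole $C_{2k+1}$ with $k\ge 2$, then on that induced subgraph $\omega=2<3=\chi$; if it has an induced odd anti-hole $\overline{C_{2k+1}}$, then $\omega=k<k+1=\chi$. Since perfection is by definition inherited by induced subgraphs, either configuration already certifies that $\Gamma$ is imperfect. Thus all the difficulty lies in the converse: a graph with no induced odd hole and no induced odd anti-hole of length at least five --- a \emph{Berge graph} --- must be perfect.

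For the converse I would follow \cite{PerfectGraphs} and replace the colouring statement by a structure theorem. The target is the dichotomy that every Berge graph either belongs to one of five \emph{basic} perfect classes --- bipartite graphs, line graphs of bipartite graphs, the complements of these two classes, and double split graphs --- or else admits one of a short list of structural decompositions: a $2$-join, a $2$-join in the complement, or a balanced skew partition (the proper-homogeneous-pair case used in the original proof is now known to be removable). Granting this dichotomy, the theorem follows by induction on $|V(\Gamma)|$: the basic classes are perfect by classical results of K\"{o}nig and Lov\'{a}sz (together with the replication lemma, which also lets one pass between a class and its complement), and one checks that a \emph{minimal} imperfect graph can admit none of the listed decompositions, so no minimal imperfect Berge graph can exist. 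The main obstacle --- essentially the entire content of the theorem --- is proving the structural dichotomy itself, which demands a long and delicate case analysis organised by which induced configurations (prisms, long holes through a prescribed edge, various wheels, and so on) occur in $\Gamma$ or in $\overline{\Gamma}$, each case being steered toward a basic class or one of the decompositions. That analysis is far beyond what we will reproduce here; we take the statement as given and use it in what follows.
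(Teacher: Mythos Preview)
Your proposal is appropriate: the paper itself does not prove this theorem at all but simply states it and attributes the proof to Chudnovsky, Robertson, Seymour, and Thomas in \cite{PerfectGraphs}, exactly as you do. Your sketch of the easy direction and the structure-theorem strategy for the converse actually goes beyond what the paper provides, but is accurate and in the same spirit of citing the result rather than reproving it.
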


Let $G$ be a group with power graph $\mathfrak{g}(G)$.  Then $\mathfrak{g}(G)$ can contain no holes of odd length.  To prove this a few short lemmas are used.
\begin{Lemma}[Path]
Let $G$ be a group with directed power graph $\vec{\mathfrak{g}}(G)$.  If a path exists between two vertices then they are adjacent.
    \begin{proof}
        Denote the start of the path as vertex $a$ and label the vertices along the path as $a_1,a_2,\cdots a_m$.  Then $a_1=a^n,a_2={a_1}^{n_1},a_3={a_2}^{n_2}\cdots a_m={a_{m-1}}^{n_{m-1}}$.  Then $a_m=a^{nn_1n_2\cdots n_{m-1}}$, so there is an edge from $a$ to $a_m$.
    \end{proof}
\end{Lemma}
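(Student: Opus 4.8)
The plan is to reduce everything to the transitivity of subgroup containment. Recall that in $\vec{\mathfrak{g}}(G)$ there is an edge from $x$ to $y$ exactly when $x \neq y$ and $\langle y\rangle \leq \langle x\rangle$, which is the same as saying $y$ is a power of $x$. So a directed path $a = a_0 \to a_1 \to \cdots \to a_m$ encodes a chain $\langle a_m\rangle \leq \langle a_{m-1}\rangle \leq \cdots \leq \langle a_1\rangle \leq \langle a_0\rangle$, equivalently a sequence of exponents $n_1,\dots,n_m$ with $a_i = a_{i-1}^{\,n_i}$ for each $i$.

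First I would run an induction on the path length $m$. The case $m=1$ is just the definition of an edge. For the inductive step, given a path $a_0 \to a_1 \to \cdots \to a_m$, apply the inductive hypothesis to the subpath $a_1 \to \cdots \to a_m$ to get $\langle a_m\rangle \leq \langle a_1\rangle$; combining with $\langle a_1\rangle \leq \langle a_0\rangle$ from the first edge and using transitivity of $\leq$ on subgroups yields $\langle a_m\rangle \leq \langle a_0\rangle$. In exponent language this is the one-line computation $a_m = a_0^{\,n_1 n_2 \cdots n_m}$. One can of course skip the induction and simply compose all $m$ defining relations at once, which is essentially the displayed computation in the statement's proof; I would present whichever version reads more cleanly.

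The last step is to upgrade ``$\langle a_m\rangle \leq \langle a_0\rangle$'' to ``$a_0$ and $a_m$ are adjacent'', i.e. to exhibit an honest edge $a_0 \to a_m$; this needs $a_0 \neq a_m$, which is automatic when the two endpoints of the path are distinct vertices. I do not expect any genuine obstacle here: the entire content is transitivity of subgroup inclusion. The one point that needs care is the meaning of ``path'' in the statement — it must be a \emph{directed} path, with all edges oriented consistently from $a_0$ toward $a_m$. For an undirected path in $\vec{\mathfrak{g}}(G)$ the conclusion is false, since every element has a directed edge to the identity, so any two non-adjacent elements are joined through the identity by an undirected path of length $2$; if the statement were intended for undirected paths, a genuinely different (and weaker) argument would be required.
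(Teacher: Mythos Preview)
Your proposal is correct and follows essentially the same approach as the paper: both arguments unwind the definition of a directed edge as ``is a power of'' and compose the exponents along the path to get $a_m = a^{n n_1 \cdots n_{m-1}}$, with your subgroup-containment phrasing being an equivalent restatement. Your added remarks about needing the path to be directed and the endpoints to be distinct are useful clarifications that the paper leaves implicit.
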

\begin{Lemma}[Strong Path]
Let $G$ be a group with directed power graph $\vec{\mathfrak{g}}(G)$, and let $\vec{\mathfrak{g}}(G)$ contain a directed path of length $n$, then the vertices making up the directed path form a clique of size $n$ in $\mathfrak{g}(G)$.
\end{Lemma}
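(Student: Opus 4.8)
The plan is to reduce the statement to the previous (Path) lemma together with the observation that the vertices traversed by a path are pairwise distinct. Write the directed path as $v_1 \to v_2 \to \cdots \to v_n$, so that for each $i$ the ordered pair $(v_i,v_{i+1})$ is an edge of $\vec{\mathfrak{g}}(G)$, i.e.\ $\langle v_{i+1}\rangle \leq \langle v_i\rangle$. Since a path does not repeat vertices, the $v_i$ are pairwise distinct; this is what will let us upgrade ``adjacency in $\vec{\mathfrak{g}}(G)$'' to genuine edges of $\mathfrak{g}(G)$ (recall that in $\mathfrak{g}(G)$ an edge $\{x,y\}$ requires $x\neq y$).

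The key step is to show that \emph{every} pair $v_i,v_j$ with $i<j$ is adjacent in $\mathfrak{g}(G)$. First I would observe that the segment $v_i \to v_{i+1} \to \cdots \to v_j$ is itself a directed path in $\vec{\mathfrak{g}}(G)$; applying the Path lemma to it yields an edge from $v_i$ to $v_j$ in $\vec{\mathfrak{g}}(G)$, hence $\langle v_j\rangle \leq \langle v_i\rangle$. (Equivalently, one may simply invoke transitivity of the subgroup-inclusion relation along the chain $\langle v_n\rangle \leq \langle v_{n-1}\rangle \leq \cdots \leq \langle v_1\rangle$.) Combined with $v_i\neq v_j$, this shows $\{v_i,v_j\}\in E(\mathfrak{g}(G))$.

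Finally, since every two distinct vertices among $v_1,\dots,v_n$ are adjacent in $\mathfrak{g}(G)$, the set $\{v_1,\dots,v_n\}$ is by definition a clique, and it has $n$ vertices, giving a clique of size $n$. I do not anticipate a real obstacle here; the only points requiring a word of care are the convention that ``length $n$'' refers to a path through $n$ vertices and the use of distinctness of path vertices to conclude that we obtain honest edges of the undirected power graph rather than loops.
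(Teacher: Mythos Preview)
Your proposal is correct and takes essentially the same approach as the paper: the paper's proof consists of the single sentence ``The proof follows from the previous (Path) Lemma,'' and you have simply spelled out in detail how that reduction works (applying the Path lemma to each subpath $v_i\to\cdots\to v_j$ and using distinctness of path vertices). Your added remarks about the length convention and distinctness are reasonable clarifications, but there is no methodological difference from the paper.
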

\begin{proof}
 The proof follows from the previous (Path) Lemma.
\end{proof}

The strong path lemma shows that whenever there is a path of length $n$ in $\vec{\mathfrak{g}}(G)$  there is a clique of size $n$ consisting of the same vertices in $\mathfrak{g}(G)$.  Here it will be shown that the converse is true as well, that is, whenever there is a clique of size $n$ in $\mathfrak{g}(G)$, then those vertices are traversable by a path in $\vec{\mathfrak{g}}(G)$.  Path-clique equivalence in power graphs of finite groups is a consequence of R\'{e}dei's theorem~\cite{Redei}.

As per our need, later in our work, we reproduced the proof of the following well known theorem here one more time.

\begin{Theorem}[R\'{e}dei's Theorem]
    Every orientation of a complete graph contains a directed Hamiltonian path.
\end{Theorem}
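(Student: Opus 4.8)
The plan is to prove this by induction on the number $n$ of vertices of the complete graph. The base cases $n = 1$ and $n = 2$ are immediate: a single vertex is a (trivial) directed Hamiltonian path, and for two vertices the unique oriented edge is already such a path. For the inductive step, fix an orientation $T$ of the complete graph on vertex set $\{v\} \cup W$ with $|W| = n-1$. Restricting $T$ to $W$ yields an orientation of the complete graph on $n-1$ vertices, so by the inductive hypothesis there is a directed Hamiltonian path $w_1 \to w_2 \to \cdots \to w_{n-1}$ through $W$. It then remains to splice the vertex $v$ into this path.

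I would argue by cases on how $v$ attaches to the endpoints of this path. If the edge between $v$ and $w_1$ is oriented $v \to w_1$, then $v \to w_1 \to \cdots \to w_{n-1}$ is a directed Hamiltonian path of $T$ and we are done; symmetrically, if the edge between $w_{n-1}$ and $v$ is oriented $w_{n-1} \to v$, then appending $v$ at the end works. In the remaining case we have both $w_1 \to v$ and $v \to w_{n-1}$.

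The key step — the only one requiring an idea rather than bookkeeping — is a discrete intermediate-value argument for this last case. Walking along the path, consider the orientations of the edges $\{w_i, v\}$ for $i = 1, \dots, n-1$: the first ($i = 1$) points toward $v$ while the last ($i = n-1$) points away from $v$, so there must be an index $i$ with $1 \le i < n-1$ for which $w_i \to v$ and $v \to w_{i+1}$. Then $w_1 \to \cdots \to w_i \to v \to w_{i+1} \to \cdots \to w_{n-1}$ is a directed Hamiltonian path of $T$, completing the induction.

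Alternatively — and perhaps more cleanly — one can run the same insertion argument on an extremal object: take a longest directed path $P$ in $T$ and suppose some vertex $v$ lies off $P$; the three cases above show that $v$ can be prepended, appended, or (again via the intermediate-value observation) inserted, in every case producing a strictly longer directed path and contradicting maximality, so $P$ must visit every vertex. In either formulation I expect the intermediate-value/insertion step to be the crux; the surrounding case analysis and the base of the induction are routine.
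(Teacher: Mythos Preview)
Your proof is correct and follows essentially the same approach as the paper: induction on the number of vertices, with the inductive step inserting the new vertex either at the front (if $v\to w_1$), at the end (if $w_{n-1}\to v$), or at some internal position found by scanning along the path. The paper's version (embedded in the proof of the Path--clique Equivalence theorem) additionally treats the case of a two-sided edge---relevant because directed power graphs are not tournaments---and phrases the scan sequentially (``proceeding in this way'') rather than as an intermediate-value observation, but the core insertion idea is identical.
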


\begin{Theorem}[Path-clique Equivalence]\label{theo 3}
Let $G$ be a group with directed power graph $\vec{\mathfrak{g}}(G)$ and undirected power graph $\mathfrak{g}(G)$.  Then whenever there is a path in $\vec{\mathfrak{g}}(G)$, its constituent vertices form a clique in $\mathfrak{g}(G)$, and whenever there is a clique in $\mathfrak{g}(G)$, its vertices are traversable by a path in $\vec{\mathfrak{g}}(G)$. 
\end{Theorem}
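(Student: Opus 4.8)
The first assertion is nothing more than a restatement of the Strong Path Lemma, so the plan is to concentrate on the converse: given a clique in $\mathfrak{g}(G)$, to produce a directed path through exactly its vertices in $\vec{\mathfrak{g}}(G)$.

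First I would fix a clique $C=\{v_1,\dots,v_n\}$ of $\mathfrak{g}(G)$ and examine the subdigraph of $\vec{\mathfrak{g}}(G)$ induced on $C$. For any two distinct $v_i,v_j\in C$, adjacency in $\mathfrak{g}(G)$ forces $\langle v_i\rangle\leq\langle v_j\rangle$ or $\langle v_j\rangle\leq\langle v_i\rangle$; by the definition of $\vec{\mathfrak{g}}(G)$ this means at least one of the arcs $(v_i,v_j)$, $(v_j,v_i)$ is present. Hence the underlying undirected graph of the induced subdigraph on $C$ is the complete graph $K_n$. Whenever both arcs occur — which happens precisely when $\langle v_i\rangle=\langle v_j\rangle$ — I would simply delete one of them, arriving at a tournament $T$ on the vertex set $C$ all of whose arcs are genuine arcs of $\vec{\mathfrak{g}}(G)$.

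Next I would invoke R\'edei's theorem: the tournament $T$, being an orientation of $K_n$, contains a directed Hamiltonian path $v_{\sigma(1)}\to v_{\sigma(2)}\to\cdots\to v_{\sigma(n)}$. Since every arc of $T$ is an arc of $\vec{\mathfrak{g}}(G)$, this Hamiltonian path of $T$ is already a directed path in $\vec{\mathfrak{g}}(G)$, and it visits exactly the vertices of $C$. Combined with the Strong Path Lemma, this yields the claimed equivalence.

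The one place I expect to need care in the write-up is the handling of pairs $v_i,v_j$ with $\langle v_i\rangle=\langle v_j\rangle$: these produce two-cycles rather than a single arc, so the induced subdigraph on $C$ is not literally a tournament. The remedy is routine — first pass to a spanning tournament by selecting one arc from each such pair, then observe that applying R\'edei's theorem to this tournament can only ever use arcs already present in $\vec{\mathfrak{g}}(G)$, so no spurious edges enter the path. Beyond this bit of bookkeeping, the proof is a direct reduction to R\'edei's theorem.
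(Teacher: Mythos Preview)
Your proof is correct and follows essentially the same approach as the paper: both reduce the clique-to-path direction to R\'edei's theorem, with the only difference being that you invoke R\'edei as a black box after passing to a spanning tournament, whereas the paper inlines the standard inductive proof of R\'edei (handling the two-sided edges as a separate case within the induction rather than discarding them beforehand). The paper even remarks just before the theorem that path--clique equivalence is a consequence of R\'edei's theorem, so your packaging is in fact the cleaner version of what the authors intend.
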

\begin{proof}
The proof of the first direction is given above, here we show that a clique in $\mathfrak{g}(G)$ is traversable by a path in $\vec{\mathfrak{g}}(G)$.  Let $\mathfrak{g}(G)$ contain a clique of size $\alpha$.  The proof is by induction on $\alpha$.  If $\alpha=1$, then the clique is traversable by the path consisting of only the single vertex in the clique.

Suppose the result holds for $0<\alpha \leq k$, and let there exist in $\mathfrak{g}(G)$ a clique of size $k+1$.  By the induction hypothesis, a clique of size $k$ is traversable by a directed path, so at most one vertex is excluded from the longest path through the clique. Proceed along that directed path through these vertices and label them in the order they are encountered, $v_1, v_2, v_3,\cdots,v_k$. If for any $v_i$ in the clique both $(v_i,v_{k+1})$ and $(v_{k+1},v_i)$ are edges in $\vec{\mathfrak{g}}(G)$, then the sequence can be modified from $(\cdots v_i,v_{i+1}\cdots)$ to $(\cdots v_i, v_{k+1}, v_{i+1}\cdots)$ adding $v_{k+1}$ to the path.  Suppose no two-sided edges exist in the clique. If $(v_{k+1}, v_1)$ is an edge in $\vec{\mathfrak{g}}(G)$, then $v_{k+1}$ can be added to the beginning of the existing path.  If $(v_k,v_{k+1}) $ is an edge in $\vec{\mathfrak{g}}(G)$, then $v_{k+1}$ can be added to the end of the existing path. Suppose neither of these are edges in $\vec{\mathfrak{g}}(G)$.  If $(v_{k+1}, v_2)$ is an edge is $\vec{\mathfrak{g}}(G)$, then $v_{k+1}$ can be inserted in the path between $v_1$ and $v_2$.  Then, if $(v_{k+1}, v_2)$ is not an edge in $\vec{\mathfrak{g}}(G)$, $(v_2,v_{k+1})$ must be an edge in $\vec{\mathfrak{g}}(G)$.  Proceeding in this way we get the desired directed path in $\vec{\mathfrak{g}}(G)$.
\end{proof}

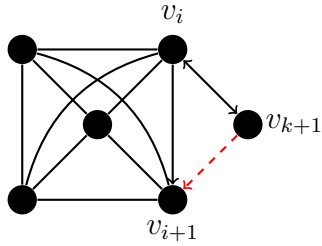
\begin{figure}[h]
\centering
\caption{An illustration of Theorem \ref{theo 3}.  Only relevant edges from the new vertex are shown in each case.}
\begin{subfigure}[b]{0.45\textwidth}
\centering

    \begin{tikzpicture}
            \node[circle,fill] (A) at (0,0) {};
            \node[circle,fill] (B) at (-1,-1) {};
            \node[circle,fill] (C) at (1,-1) [label={[yshift=-0.9cm]$v_{i+1}$}] {};
            \node[circle,fill] (D) at (-1,1) {};
            \node[circle,fill] (E) at (1,1) [label={$v_i$}] {};
            \node[circle,fill] (F) at (2,0) [label={[yshift=-0.48cm, xshift=0.62cm]$v_{k+1}$}] {};
            \Edge(A)(B);
            \Edge(A)(C);
            \Edge(A)(D);
            \Edge(A)(E);
            \Edge(B)(C);
            \Edge(B)(D);
            \Edge[style={bend left}](B)(E);
            \Edge[style={bend right}](C)(D);
            \Edge[style={<-}](C)(E);
            \Edge(D)(E);
            \Edge[style={<->}](E)(F)
            \Edge[style={->, dashed}, color=red](F)(C)
    \end{tikzpicture}
    \caption{An edge in both directions between $v_{k+1}$ and any other vertex implies $v_{k+1}$ can be added to the path}
\end{subfigure}
\qquad
\begin{subfigure}[b]{0.45\textwidth}
\centering

    \begin{tikzpicture}
            \node[circle,fill] (A) at (0,0) [label={[yshift=-0.9cm]$v_{3}$}] {};
            \node[circle,fill] (B) at (-1,-1) [label={[yshift=-0.9cm]$v_{1}$}] {};
            \node[circle,fill] (C) at (1,-1) [label={[yshift=-0.9cm]$v_{5}$}] {};
            \node[circle,fill] (D) at (-1,1) [label={$v_2$}] {};
            \node[circle,fill] (E) at (1,1) [label={$v_4$}] {};
            \node[circle,fill] (F) at (2,0) [label={[yshift=-0.48cm, xshift=0.62cm]$v_{6}$}] {};
            \Edge[style={->}](A)(B);
            \Edge[style={->}](A)(C);
            \Edge[style={<-}](A)(D);
            \Edge[style={->}](A)(E);
            \Edge[style={->}](B)(C);
            \Edge[style={->}](B)(D);
            \Edge[style={->,bend left}](B)(E);
            \Edge[style={->,bend right}](C)(D);
            \Edge[style={<-}](C)(E);
            \Edge[style={<-}](D)(E);
            \Edge[style={<-, dashed}, color=red](F)(C)
    \end{tikzpicture}
    \caption{If there is an edge from $v_k$ to $v_{k+1}$, then $v_{k+1}$ can be added to the end of the path.  Here $k=5$.}
\end{subfigure}
\linebreak
\begin{subfigure}[b]{0.45\textwidth}
\centering

    \begin{tikzpicture}
            \node[circle,fill] (A) at (0,0) [label={[yshift=-0.9cm]$v_{3}$}] {};
            \node[circle,fill] (B) at (-1,-1) [label={[yshift=-0.9cm]$v_{1}$}] {};
            \node[circle,fill] (C) at (1,-1) [label={[yshift=-0.9cm]$v_{5}$}] {};
            \node[circle,fill] (D) at (-1,1) [label={$v_2$}] {};
            \node[circle,fill] (E) at (1,1) [label={$v_4$}] {};
            \node[circle,fill] (F) at (2,0) [label={[yshift=-0.48cm, xshift=0.62cm]$v_{6}$}] {};
            \Edge[style={->}](A)(B);
            \Edge[style={->}](A)(C);
            \Edge[style={<-}](A)(D);
            \Edge[style={->}](A)(E);
            \Edge[style={->}](B)(C);
            \Edge[style={->}](B)(D);
            \Edge[style={->,bend left}](B)(E);
            \Edge[style={->,bend right}](C)(D);
            \Edge[style={<-}](C)(E);
            \Edge[style={<-}](D)(E);
            \draw[<-, dashed, color=red] (B) to[out=-30,in=-90,swap] (F);
    \end{tikzpicture}
    \caption{If there is an edge from $v_{k+1}$ to $v_{1}$, then $v_{k+1}$ can be added to beginning of the the path.  Here $k=5$.}
\end{subfigure}
\qquad
\begin{subfigure}[b]{0.45\textwidth}
\centering

    \begin{tikzpicture}
            \node[circle,fill] (A) at (0,0) [label={[yshift=-0.9cm]$v_{i}$}] {};
            \node[circle,fill] (B) at (-1,-1) {};
            \node[circle,fill] (C) at (1,-1) [label={[yshift=-0.9cm]$v_{i+1}$}] {};
            \node[circle,fill] (D) at (-1,1) {};
            \node[circle,fill] (E) at (1,1) {};
            \node[circle,fill] (F) at (2,0) [label={[yshift=-0.48cm, xshift=0.62cm]$v_{k+1}$}] {};
            \Edge[style={->}](A)(B);
            \Edge[style={->}](A)(C);
            \Edge[style={<-}](A)(D);
            \Edge[style={->}](A)(E);
            \Edge[style={->}](B)(C);
            \Edge[style={->}](B)(D);
            \Edge[style={->,bend left}](B)(E);
            \Edge[style={->,bend right}](C)(D);
            \Edge[style={<-}](C)(E);
            \Edge[style={<-}](D)(E);
            \Edge[style={->, dashed}, color=red](A)(F);
            \Edge[style={->, dashed}, color=red](F)(C);
    \end{tikzpicture}
    \caption{If none of the other cases hold, then there must exist a pair of vertices in between which $v_{k+1}$ can be placed.}
\end{subfigure}
\end{figure}

\setcounter{figure}{2}

It is well known that 
Power graphs of groups are perfect, which has been proved by using
 Theorem~\cite{PerfectGraphs}.

\section{The Cyclic Subgroup Graph}

Let $G$ be a group and define the relation $\sim$ on $G$ by $x\sim y$ if $\langle x\rangle=\langle y\rangle.$  Define the graph $\vec{C}(G)$ by $V(\vec{C}(G))= G / \mathord{\sim}$ and $(A,B)\in E(\vec{C}(G))$ if there exists elements $b\in B$ and $a\in A$ such that $\langle b\rangle \leq \langle a\rangle$ and $\langle b \rangle \neq \langle a\rangle.$  Also define a weight function $w:V(\vec{C}(G))\to \mathbb{N}$ by $w(A)=\lvert A\rvert$.  Then $\vec{C}(G)$ is a directed acyclic graph with a similar structure to the directed power graph $\vec{\mathfrak{g}}(G).$ \begin{proposition}The weight of the path with the largest weight in $\vec{C}(G)$ is the length of the longest path in $\vec{\mathfrak{g}}(G).$
\end{proposition}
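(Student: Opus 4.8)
The plan is to establish the identity by exhibiting an explicit correspondence between directed paths in $\vec{\mathfrak{g}}(G)$ and directed paths in $\vec{C}(G)$ under which the number of vertices of the former equals the total weight (sum of vertex weights) of the latter. The starting observation is that the edge relation of $\vec{C}(G)$ simplifies: since every element of a class $A\in V(\vec{C}(G))$ generates one and the same cyclic subgroup, which I will call $H_A$, the condition ``$\exists\,b\in B,\ a\in A$ with $\langle b\rangle\leq\langle a\rangle$ and $\langle b\rangle\neq\langle a\rangle$'' is equivalent to $H_B\subsetneq H_A$. Hence a directed path $A_1\to A_2\to\cdots\to A_k$ in $\vec{C}(G)$ is exactly a strictly descending chain $H_{A_1}\supsetneq H_{A_2}\supsetneq\cdots\supsetneq H_{A_k}$ of cyclic subgroups of $G$, and its weight is $\sum_{j=1}^k\lvert A_j\rvert=\sum_{j=1}^k\phi(\lvert H_{A_j}\rvert)$.

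First I would prove the inequality ``$\leq$''. Given a directed path $x_1,x_2,\dots,x_m$ in $\vec{\mathfrak{g}}(G)$, the edge condition forces $\langle x_1\rangle\supseteq\langle x_2\rangle\supseteq\cdots\supseteq\langle x_m\rangle$. The key point is that this chain, once it strictly shrinks, can never grow again, so the distinct subgroups occurring among the $\langle x_i\rangle$ form a genuine strictly descending chain $H_1\supsetneq\cdots\supsetneq H_k$, i.e.\ a directed path $A_1\to\cdots\to A_k$ in $\vec{C}(G)$ with $H_{A_j}=H_j$, and each $H_j$ is visited along one contiguous block of the $x_i$. Since the $x_i$ are distinct and every $x_i$ with $\langle x_i\rangle=H_j$ lies in the class $A_j$, at most $\lvert A_j\rvert$ of the $x_i$ fall in each block, whence $m\leq\sum_j\lvert A_j\rvert=w(A_1\to\cdots\to A_k)$.

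Next I would prove ``$\geq$'' by a construction. From a directed path $A_1\to\cdots\to A_k$ in $\vec{C}(G)$, build a sequence in $\vec{\mathfrak{g}}(G)$ by concatenating blocks: first list all generators of $H_{A_1}$ in an arbitrary order, then all generators of $H_{A_2}$, and so on. A step inside a block is a legal edge because for distinct generators $x,y$ of the same cyclic subgroup we have $x\neq y$ and $\langle y\rangle=\langle x\rangle\leq\langle x\rangle$, so $(x,y)\in E(\vec{\mathfrak{g}}(G))$ (indeed in both directions, which is why the order within a block is irrelevant); and the step from the last generator of $H_{A_j}$ to the first generator of $H_{A_{j+1}}$ is legal because $H_{A_{j+1}}\subsetneq H_{A_j}$. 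All listed elements are distinct, so this is a directed path on $\sum_j\phi(\lvert H_{A_j}\rvert)=\sum_j\lvert A_j\rvert$ vertices.

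Combining the two inequalities shows that the maximum number of vertices over all directed paths in $\vec{\mathfrak{g}}(G)$ equals the maximum weight over all directed paths in $\vec{C}(G)$, which is the assertion (reading the length of a path as its number of vertices, as in the Strong Path Lemma; if one counts edges instead, both sides decrease by $1$ and the statement is unchanged). The only step needing genuine care is the monotonicity observation in the ``$\leq$'' direction — that the subgroup chain along a path in $\vec{\mathfrak{g}}(G)$ is non-increasing and hence meets each cyclic subgroup in a single block — since this is precisely what makes the collapsed class sequence a simple path in $\vec{C}(G)$ and what makes the vertex bound tight; the remainder is bookkeeping about generators and Euler's function.
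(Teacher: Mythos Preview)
Your proof is correct and in fact cleaner and more self-contained than the paper's. The paper argues indirectly: it observes that any longest directed path in $\vec{\mathfrak{g}}(G)$ must begin at (the generators of) a maximal cyclic subgroup and then descend through a chain of maximal proper subgroups, so that both the maximum path length in $\vec{\mathfrak{g}}(G)$ and the maximum weight in $\vec{C}(G)$ are computed by the same formula $\Psi(n)$ introduced later in the paper. Your argument bypasses this structural characterization entirely by setting up a direct length-preserving correspondence between paths in the two graphs and proving the two inequalities separately. What this buys you is a proof that works uniformly for any finite group $G$ without forward reference to the $\Psi$ formula and without needing to know in advance what the optimal path looks like; what the paper's approach buys is a concrete description of the optimizer, which it then reuses in the chromatic-number section. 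Your careful remark that the subgroup chain along a path in $\vec{\mathfrak{g}}(G)$ is monotone---so that the quotient map to $\vec{C}(G)$ produces a \emph{simple} path---is exactly the point the paper's sketch glosses over.
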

\begin{proof}
The vertices in $\vec{C}(G)$ with only out-edges represent generators of the maximal cyclic subgroups of $G$.  As in the proof of Theorem 3 above any longest path in $G$ must start with these vertices.  If this maximal cyclic subgroup has order $n$ then a vertex adjacent to it represents generators of a cyclic subgroup of order $\frac{n}{d}$ where $d$ is a divisor of $n$, and if the subgroup is maximal $d$ will be a prime divisor of $n$. Then the sum of the weights of the vertices in a path from a maximal cyclic subgroup of $G$ of order $n$ through all of its maximal subgroups of maximum order to the trivial subgroup will be given by $\Psi(n)$, the length of the longest path in $\vec{\mathfrak{g}}(G)$.  
\end{proof}
When $G$ is cyclic there is no ambiguity in naming vertices in $\vec{C}(G)$ by their corresponding isomorphic group $\mathbb{Z}_n$, for example
\begin{figure}[H]
    \caption{$\vec{C}(\mathbb{Z}_{18})$ with vertex weights shown in parentheses.}
\begin{tikzpicture}
        \node[shape=circle, draw=black] (A) at (0,0) {$\mathbb{Z}_{6}(2)$};
        \node[shape=circle, draw=black] (B) at (1.5,1) {$\mathbb{Z}_{18}(6)$};
        \node[shape=circle, draw=black] (C) at (3,0) {$\mathbb{Z}_{9}(6)$};
        \node[shape=circle, draw=black] (D) at (3,-2) {$\mathbb{Z}_{3}(2)$};
        \node[shape=circle, draw=black] (E) at (1.5,-3) {$\{e\}(1)$};
        \node[shape=circle, draw=black] (F) at (0,-2) {$\mathbb{Z}_{2}(1)$};
        \path [->] (B) edge node[left] {} (A);
        \path [->] (B) edge node[left] {} (C);
        \path [->] (B) edge node[left] {} (D);
        \path [->] (B) edge node[left] {} (E);
        \path [->] (B) edge node[left] {} (F);
        \path [->] (A) edge node[left] {} (D);
        \path [->] (A) edge node[left] {} (E);
        \path [->] (A) edge node[left] {} (F);
        \path [->] (F) edge node[left] {} (E);
        \path [->] (D) edge node[left] {} (E);
        \path [->] (C) edge node[left] {} (D);
        \path [->] (C) edge node[left] {} (E);
        
    \end{tikzpicture}
\end{figure}
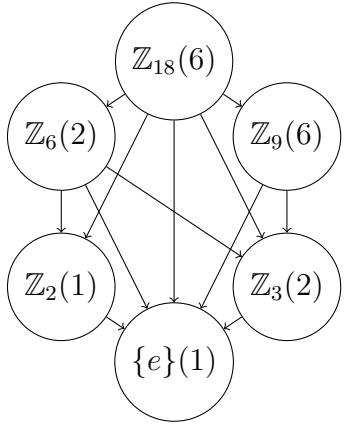
The non-oriented graph $C(G)$ also contains a lot of the information in the power graph in a smaller form.  For example $d_{\mathfrak{g}(G)}(u,v)=d_{C(G)}([u]_{\sim},[v]_{\sim})$ as long as $[u]_{\sim}\neq [v]_{\sim}$ and the independence number $\alpha(\mathfrak{g}(G))=\alpha(C(G))$.

\begin{proposition}
The cyclic subgroup graph is isomorphic to an induced subgraph of the power graph.
\end{proposition}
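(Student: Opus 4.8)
The plan is to pick an explicit transversal for the relation $\sim$ and show that the subgraph of $\mathfrak{g}(G)$ it induces is isomorphic to $C(G)$. For each class $A\in G/\!\sim$ choose an element $r_A\in A$ (possible since $G$ is finite), and set $T=\{\,r_A : A\in G/\!\sim\,\}\subseteq V(\mathfrak{g}(G))$. Define $f\colon V(C(G))\to V(\mathfrak{g}(G))$ by $f(A)=r_A$. Because distinct classes are disjoint, $f$ is injective with image exactly $T$, so it is a bijection from $V(C(G))$ onto the vertex set of the induced subgraph $\mathfrak{g}(G)[T]$.

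Next I would check that $f$ is an isomorphism of graphs. The crucial point is that if $x\sim y$ then $\langle x\rangle=\langle y\rangle$, so the cyclic subgroup $H_A:=\langle a\rangle$ is independent of the choice of $a\in A$; in particular $H_A=\langle r_A\rangle$, and $A\mapsto H_A$ is injective on $G/\!\sim$. Hence for two distinct classes $A\neq B$, the edge condition of $C(G)$ — existence of $a\in A$, $b\in B$ with $\langle b\rangle\leq\langle a\rangle$, $\langle b\rangle\neq\langle a\rangle$, or the same with the roles reversed — is equivalent to "$H_B\leq H_A$ or $H_A\leq H_B$", which (since $A\neq B$ forces $H_A\neq H_B$, hence $r_A\neq r_B$) is precisely the condition for $\{r_A,r_B\}$ to be an edge of $\mathfrak{g}(G)$. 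If $A=B$ then $f(A)=f(B)$ and neither graph has a loop there. Therefore $\{A,B\}\in E(C(G))$ iff $\{f(A),f(B)\}\in E(\mathfrak{g}(G))$, and since both endpoints lie in $T$, iff $\{f(A),f(B)\}\in E(\mathfrak{g}(G)[T])$. Thus $f$ realizes $C(G)$ as the induced subgraph $\mathfrak{g}(G)[T]$.

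There is no serious obstacle here; the only subtlety worth writing out is that the adjacency relation of $C(G)$, phrased with an existential quantifier over class representatives, is in fact representative-independent — this is immediate from the definition of $\sim$, but it is exactly what makes $f$ a well-defined graph map in both directions, so I would state it explicitly. I would close by remarking that the identical argument, retaining orientations, shows $\vec{C}(G)$ is isomorphic to an induced sub-digraph of $\vec{\mathfrak{g}}(G)$, and that this is the natural setting in which the shared invariants mentioned earlier (independence number, holes, completeness) transfer.
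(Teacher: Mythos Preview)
Your proposal is correct and follows essentially the same approach as the paper: choose a fixed representative $x'$ for each class $[x]_\sim$ and define $f([x]_\sim)=x'$, then observe that adjacency in $C(G)$ matches adjacency in $\mathfrak{g}(G)$ by the definition of the cyclic subgroup graph. The paper's proof is a one-line version of yours; your explicit verification that the edge relation is representative-independent and your remark on the directed case are welcome elaborations but not a different method.
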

\begin{proof}
Define $f:V(C(G))\to(V\mathfrak{g}(G))$ by $f([x]_{\sim})=x^{'}$, for a fixed choice of the corresponding equivalence class representative $x^{'}$. The result follows by the definition of cyclic subgroup graph.
\end{proof}

\begin{proposition}
$d_{\mathfrak{g}(G)}(u,v)=d_{C(G)}([u]_{\sim},[v]_{\sim})$ as long as $[u]_{\sim}\neq [v]_{\sim}$.
\end{proposition}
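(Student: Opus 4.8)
The plan is to show the two distances coincide by proving inequalities in both directions, using the quotient map $\pi\colon G \to G/\!\sim$ given by $\pi(x) = [x]_\sim$ as the bridge. The key observation is that adjacency in $\mathfrak{g}(G)$ descends to adjacency (or equality) in $C(G)$, and conversely every edge of $C(G)$ lifts to an edge of $\mathfrak{g}(G)$ between any chosen representatives. More precisely: if $\{x,y\} \in E(\mathfrak{g}(G))$ then $\langle x\rangle \le \langle y\rangle$ or $\langle y\rangle \le \langle x\rangle$, so either $[x]_\sim = [y]_\sim$ or $\{[x]_\sim,[y]_\sim\} \in E(C(G))$; and if $\{[x]_\sim,[y]_\sim\}\in E(C(G))$ with $[x]_\sim \ne [y]_\sim$, then for \emph{every} $x' \in [x]_\sim$ and $y' \in [y]_\sim$ we have $\langle x'\rangle = \langle x\rangle$ and $\langle y'\rangle = \langle y\rangle$, so one contains the other and $\{x',y'\} \in E(\mathfrak{g}(G))$.

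For the inequality $d_{C(G)}([u]_\sim,[v]_\sim) \le d_{\mathfrak{g}(G)}(u,v)$, I would take a shortest path $u = w_0, w_1, \dots, w_k = v$ in $\mathfrak{g}(G)$, apply $\pi$ to every vertex, and delete consecutive repetitions. By the first bullet above, each surviving step is an edge of $C(G)$, so we obtain a walk in $C(G)$ from $[u]_\sim$ to $[v]_\sim$ of length at most $k$; since $[u]_\sim \ne [v]_\sim$ this walk is nonempty and contains a path of length $\le k$. For the reverse inequality $d_{\mathfrak{g}(G)}(u,v) \le d_{C(G)}([u]_\sim,[v]_\sim)$, take a shortest path $[u]_\sim = A_0, A_1, \dots, A_m = [v]_\sim$ in $C(G)$ and lift it: choose representatives $a_0 = u$, $a_m = v$, and for $0 < i < m$ any $a_i \in A_i$. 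By the second bullet, consecutive $a_i$ are adjacent in $\mathfrak{g}(G)$ (here we use that $A_i \ne A_{i+1}$ along a shortest path, so every edge of $C(G)$ used is a genuine edge, not a loop), giving a walk of length $m$ in $\mathfrak{g}(G)$ from $u$ to $v$, hence $d_{\mathfrak{g}(G)}(u,v) \le m$. Combining the two inequalities finishes the proof.

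The only delicate point — and the step I would state most carefully — is handling the endpoints and the collapsing of equivalence classes: one must ensure that when projecting a $\mathfrak{g}(G)$-path down, the resulting walk in $C(G)$ actually has positive length (which is exactly where the hypothesis $[u]_\sim \ne [v]_\sim$ is used), and that when lifting a $C(G)$-path up, we are free to pin the two endpoints to the prescribed representatives $u$ and $v$ rather than arbitrary ones (which is fine precisely because adjacency in $\mathfrak{g}(G)$ between members of adjacent classes is representative-independent). Everything else is a routine translation between walks upstairs and downstairs; no induction or case analysis beyond the two-bullet dictionary is needed.
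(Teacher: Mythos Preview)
Your proposal is correct and follows essentially the same approach as the paper: project a shortest $\mathfrak{g}(G)$-path to $C(G)$ and lift a shortest $C(G)$-path back to $\mathfrak{g}(G)$ to obtain the two inequalities. The only cosmetic difference is that the paper argues all classes along a shortest $\mathfrak{g}(G)$-path are already distinct (so no collapsing occurs) and phrases the second inequality as a contradiction, whereas you allow collapsing and prove both directions as direct inequalities; the underlying mechanism is identical.
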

\begin{proof}
let $G$ be a group and let $u,v\in G$ with $d_{\mathfrak{g}(G)}(u,v)=k$.  
Let $P=\{u, u_1,u_2,\cdots,u_k, v\}$ denote the shortest path from $u$ to $v$ in $\mathfrak{g}(G)$.  It must be the case that for each $u_i,u_j\in P$, $[u_i]_{\sim}\neq[u_j]_{\sim}$ otherwise deleting the $u_i$ or $u_j$ we get a shorter path in $\mathfrak{g}(G)$ which is a contradiction.  Then $\{[u]_{\sim},[u_1]_{\sim},\cdots,[u_k]_{\sim}\}$ is a path from $[u]_{\sim}$ to $[v]_{\sim}$ in $C(G)$.  Suppose there is a shorter path from $[u]_{\sim}$ to $[v]_{\sim}$ in $C(G)$, and denote that path $P_1=\{[u]_{\sim},[y_1]_{\sim},\cdots[y_j]_{\sim},[v]_{\sim}\}$ where $j<k$.  Then $\{u,y_1,\cdots, y_j,v\}$ is a path in $\mathfrak{g}(G)$ which is shorter than $P$, a contradiction.  Then $\{[u]_{\sim},[u_1]_{\sim},\cdots,[u_k]_{\sim}\}$ is the shortest path from $[u]_{\sim}$ to $[v]_{\sim}$ in $C(G)$ meaning $d_{C(G)}([u]_{\sim},[v]_{\sim})=k=d_{\mathfrak{g}(G)}(u,v)$.
\end{proof}
\begin{proposition}
Let $G$ be a group.  Elements $\{g_1,g_2,\cdots,g_k\}$ form an independent set in $\mathfrak{g}(G)$ if and only if $\{[g_1]_{\sim},[g_2]_{\sim},\cdots,[g_k]_{\sim}\}$ is an independent set in $C(G)$.
\end{proposition}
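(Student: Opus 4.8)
The plan is to reduce both conditions to one combinatorial statement about the cyclic subgroups $\langle g_i\rangle$, exploiting the fact that adjacency in \emph{either} graph depends only on the cyclic subgroups involved. First I would record the elementary observation that, for distinct $x,y\in G$, the pair $\{x,y\}$ is an edge of $\mathfrak{g}(G)$ precisely when $\langle x\rangle\subseteq\langle y\rangle$ or $\langle y\rangle\subseteq\langle x\rangle$; in particular, if $x\neq y$ but $\langle x\rangle=\langle y\rangle$, then $x$ and $y$ are still adjacent in $\mathfrak{g}(G)$. Consequently, in any independent set of $\mathfrak{g}(G)$ the listed elements generate pairwise distinct cyclic subgroups, so the classes $[g_i]_{\sim}$ are pairwise distinct and $\{[g_1]_{\sim},\dots,[g_k]_{\sim}\}$ is genuinely a set of $k$ vertices of $\vec{C}(G)$. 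On the other side, the very fact that $\{[g_1]_{\sim},\dots,[g_k]_{\sim}\}$ is named as an independent set of $C(G)$ presupposes that these are $k$ distinct vertices, so $\langle g_i\rangle\neq\langle g_j\rangle$ for $i\neq j$ in that case as well.

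For the forward direction, assume $\{g_1,\dots,g_k\}$ is independent in $\mathfrak{g}(G)$, fix $i\neq j$, and note that non-adjacency gives $\langle g_i\rangle\not\subseteq\langle g_j\rangle$ and $\langle g_j\rangle\not\subseteq\langle g_i\rangle$. Every element of $[g_i]_{\sim}$ generates $\langle g_i\rangle$ and likewise for $j$, so by the definition of $E(\vec{C}(G))$ there can be no directed edge $([g_i]_{\sim},[g_j]_{\sim})$ or $([g_j]_{\sim},[g_i]_{\sim})$, hence no edge of $C(G)$ joining $[g_i]_{\sim}$ and $[g_j]_{\sim}$. As $i,j$ were arbitrary, $\{[g_1]_{\sim},\dots,[g_k]_{\sim}\}$ is independent in $C(G)$.

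For the converse, assume $\{[g_1]_{\sim},\dots,[g_k]_{\sim}\}$ is independent in $C(G)$; as noted above the $\langle g_i\rangle$ are then pairwise distinct. Fix $i\neq j$: the absence of an edge between $[g_i]_{\sim}$ and $[g_j]_{\sim}$ in $C(G)$ means neither $\langle g_i\rangle$ nor $\langle g_j\rangle$ is a proper subgroup of the other, and combined with $\langle g_i\rangle\neq\langle g_j\rangle$ this yields $\langle g_i\rangle\not\subseteq\langle g_j\rangle$ and $\langle g_j\rangle\not\subseteq\langle g_i\rangle$, so $g_i$ and $g_j$ are non-adjacent in $\mathfrak{g}(G)$; hence $\{g_1,\dots,g_k\}$ is independent in $\mathfrak{g}(G)$. (One could alternatively invoke the earlier proposition that $C(G)$ embeds as an induced subgraph of $\mathfrak{g}(G)$, but that directly handles only independent sets built from the chosen class representatives $x'$, so the ``adjacency depends only on the generated subgroup'' observation is still needed to pass to arbitrary $g_i$.) I expect the only delicate point to be exactly this bookkeeping — distinctness of the classes and strict-versus-non-strict containment of subgroups — with the rest being immediate from unwinding the two definitions.
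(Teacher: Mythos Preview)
Your proof is correct and follows essentially the same approach as the paper: both directions are reduced to the observation that adjacency in either graph is governed by containment of the cyclic subgroups $\langle g_i\rangle$. Your version is in fact more careful than the paper's, which glosses over the distinctness of the classes $[g_i]_\sim$ and the strict-versus-non-strict containment issue you flag, and which dispatches the converse in a single sentence.
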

\begin{proof}
Suppose $I=\{g_1,g_2,\cdots,g_k\}$ forms an independent set in $\mathfrak{g}(G)$.  Then for each $g_i,g_j\in I$, $\langle g_i\rangle \nleq \langle g_j\rangle$ and $\langle g_j\rangle \nleq \langle g_i\rangle$. So, $\{[g_i]_{\sim},[g_j]_{\sim}\}\notin E(C(G))$,  giving an independent set of size $k$ in $C(G)$.

Now suppose $\{[g_1]_{\sim},[g_2]_{\sim},\cdots,[g_k]_{\sim}\}$ is an independent set in $C(G)$. Then  $\{g_1,g_2,\cdots,g_k\}$ is an independent set in $\mathfrak{g}(G)$.
\end{proof}
\begin{Corollary}
$\alpha(\mathfrak{g}(G))=\alpha(C(G))$.
\end{Corollary}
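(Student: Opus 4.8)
The plan is to deduce the equality from the preceding proposition, which gives a correspondence between independent sets in $\mathfrak{g}(G)$ and independent sets in $C(G)$, by proving the two inequalities $\alpha(\mathfrak{g}(G)) \le \alpha(C(G))$ and $\alpha(C(G)) \le \alpha(\mathfrak{g}(G))$ separately.

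For the first inequality, I would take a maximum independent set $I = \{g_1, g_2, \ldots, g_k\}$ in $\mathfrak{g}(G)$, so $k = \alpha(\mathfrak{g}(G))$. By the previous proposition, $\{[g_1]_{\sim}, [g_2]_{\sim}, \ldots, [g_k]_{\sim}\}$ is an independent set in $C(G)$. The one point requiring care is that these $k$ equivalence classes are pairwise distinct: if $[g_i]_{\sim} = [g_j]_{\sim}$ for some $i \ne j$, then $\langle g_i \rangle = \langle g_j \rangle$, hence $\langle g_i \rangle \le \langle g_j \rangle$ with $g_i \ne g_j$, so $g_i$ and $g_j$ would be adjacent in $\mathfrak{g}(G)$, contradicting that $I$ is independent. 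Thus $C(G)$ contains an independent set of size $k$, giving $\alpha(C(G)) \ge \alpha(\mathfrak{g}(G))$.

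For the reverse inequality, I would start from a maximum independent set $\{[h_1]_{\sim}, [h_2]_{\sim}, \ldots, [h_m]_{\sim}\}$ in $C(G)$, where $m = \alpha(C(G))$, and choose a representative $h_i$ from each class. Since the classes are distinct, the $h_i$ are distinct elements of $G$, and by the previous proposition $\{h_1, h_2, \ldots, h_m\}$ is an independent set in $\mathfrak{g}(G)$, so $\alpha(\mathfrak{g}(G)) \ge m = \alpha(C(G))$. Combining the two inequalities yields $\alpha(\mathfrak{g}(G)) = \alpha(C(G))$.

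There is no real obstacle here; the only subtlety is the distinctness observation in the first direction, and even that is immediate from the definition of adjacency in $\mathfrak{g}(G)$. The bulk of the work has already been done in the preceding proposition, so the corollary is essentially a bookkeeping argument.
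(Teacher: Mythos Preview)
Your argument is correct and is exactly the approach the paper intends: the corollary is stated in the paper without proof, as an immediate consequence of the preceding proposition, and your two-inequality argument is precisely the routine verification that makes this explicit. The only point you add beyond what the paper spells out is the distinctness of the classes $[g_i]_\sim$, which is indeed necessary and which you handle correctly.
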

\begin{proposition}
$\mathfrak{g}(G)$ is complete if and only if $C(G)$ is complete.
\end{proposition}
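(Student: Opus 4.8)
The plan is to prove both implications by directly unwinding the definitions of the two edge relations, using only the observation that two elements $x,y$ lie in the same $\sim$-class exactly when $\langle x\rangle=\langle y\rangle$.

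First I would prove the forward implication. Assume $\mathfrak{g}(G)$ is complete and let $[x]_{\sim}$ and $[y]_{\sim}$ be two distinct vertices of $C(G)$. Distinctness of the classes forces $x\neq y$ and $\langle x\rangle\neq\langle y\rangle$. Since $\mathfrak{g}(G)$ is complete, $\{x,y\}\in E(\mathfrak{g}(G))$, so without loss of generality $\langle x\rangle\leq\langle y\rangle$; together with $\langle x\rangle\neq\langle y\rangle$ this gives $\langle x\rangle\subsetneq\langle y\rangle$, which is precisely the condition making $([y]_{\sim},[x]_{\sim})\in E(\vec{C}(G))$, hence an edge of $C(G)$. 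As the two classes were arbitrary, $C(G)$ is complete.

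Next the converse. Assume $C(G)$ is complete and take distinct $x,y\in G$. If $[x]_{\sim}=[y]_{\sim}$ then $\langle x\rangle=\langle y\rangle$, so in particular $\langle x\rangle\leq\langle y\rangle$ and $\{x,y\}\in E(\mathfrak{g}(G))$. If $[x]_{\sim}\neq[y]_{\sim}$, then since $C(G)$ is complete there is an edge between $[x]_{\sim}$ and $[y]_{\sim}$, so by definition there exist $a\in[x]_{\sim}$ and $b\in[y]_{\sim}$ with, say, $\langle b\rangle\subsetneq\langle a\rangle$; but $\langle a\rangle=\langle x\rangle$ and $\langle b\rangle=\langle y\rangle$, so $\langle y\rangle\leq\langle x\rangle$ and again $\{x,y\}\in E(\mathfrak{g}(G))$. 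Thus $\mathfrak{g}(G)$ is complete.

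Alternatively, and more briefly, one can invoke results already established: $\mathfrak{g}(G)$ is complete iff $\alpha(\mathfrak{g}(G))\leq 1$, which by the Corollary $\alpha(\mathfrak{g}(G))=\alpha(C(G))$ equals the condition $\alpha(C(G))\leq 1$, i.e.\ $C(G)$ is complete; one only has to dispatch separately the degenerate case where $G$, hence each graph, has a single vertex. There is no substantive obstacle here: the content is entirely in matching the edge relation of $C(G)$ against that of $\mathfrak{g}(G)$, and the only point requiring a moment's care is the case $[x]_{\sim}=[y]_{\sim}$, where distinct group elements are still adjacent in $\mathfrak{g}(G)$ even though they collapse to a single vertex in $C(G)$.
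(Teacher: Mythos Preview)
Your proof is correct and the forward direction matches the paper's argument exactly. For the converse, the paper argues by contrapositive---a non-edge in $\mathfrak{g}(G)$ gives an independent set of size two, which by the prior proposition is an independent set in $C(G)$, hence a non-edge there---whereas your primary argument is direct via the case split on whether $[x]_{\sim}=[y]_{\sim}$; your alternative route through $\alpha(\mathfrak{g}(G))=\alpha(C(G))$ is in fact precisely the paper's contrapositive recast in different language. These are cosmetic differences; the content is the same definition-unwinding in each case.
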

\begin{proof}
Suppose $\mathfrak{g}(G)$ is complete and let $[u]_{\sim}$ and $[v]_{\sim}$ be arbitrary vertices in $C(G)$ Since $u\in [u]_{\sim}$ and $v\in [v]_{\sim}$ and $\{u,v\}\in E(\mathfrak{g}(G))$, $\langle u\rangle \leq \langle v\rangle$ or $\langle v\rangle \leq \langle u\rangle$, so   $\{[u]_{\sim},[v]_{\sim}\}\in E(C(G))$.  Since $[u]_{\sim}$ and $[v]_{\sim}$ were arbitrary, $C(G)$ must be complete.

Now suppose $\mathfrak{g}(G)$ is not complete, then there exist vertices $u$ and $v$ such that $\{u,v\}\notin E(\mathfrak{g}(G))$.  Then $\{u,v\}$ is an independent set in $\mathfrak{g}(G)$ and consequently $\{[u]_{\sim},[v]_{\sim}\}$ is an independent set in $C(G)$.  Then there exist elements $[u]_{\sim}$ and $[v]_{\sim}$ in $C(G)$ such that $\{[u]_{\sim},[v]_{\sim}\}\notin E(C(G))$ so $C(G)$ is not complete.
\end{proof}
\begin{proposition}
Let $u_1,u_2,\cdots u_{2k}$ be vertices in $\mathfrak{g}(G)$, then $u_1, \cdots u_{2k}$ is a hole in $\mathfrak{g}(G)$ if and only if $[u_1]_{\sim},[u_2]_{\sim},\cdots,[u_{2k}]_{\sim}$ is a hole in $C(G)$.
\end{proposition}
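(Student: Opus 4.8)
The plan is to prove both implications by unwinding the two adjacency conditions: in $\mathfrak{g}(G)$ two distinct elements $x,y$ are adjacent iff $\langle x\rangle$ and $\langle y\rangle$ are comparable, while in $C(G)$ two distinct classes $A,B$ are adjacent iff their (well-defined) generated subgroups are comparable — note that for a class $A=[a]_\sim$ every member generates the same subgroup $\langle a\rangle$, so an edge of $C(G)$ is exactly a pair of distinct classes with comparable generated subgroups. The only mismatch between the two notions is that $\mathfrak{g}(G)$ contains edges between distinct members of a single $\sim$-class and $C(G)$ does not; consequently the heart of the argument is to show that the vertices of a hole necessarily lie in pairwise distinct $\sim$-classes.

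For the forward direction, suppose $u_1,\dots,u_{2k}$ is a hole in $\mathfrak{g}(G)$ (so, under the standing convention, it has length $2k\ge 4$; indices are read cyclically). First I would check $[u_i]_\sim\ne[u_j]_\sim$ for $i\ne j$. If $i,j$ are non-consecutive on the cycle and $\langle u_i\rangle=\langle u_j\rangle$, then $u_i\ne u_j$ with comparable cyclic subgroups, so $\{u_i,u_j\}\in E(\mathfrak{g}(G))$ is a chord, a contradiction. If $j=i+1$ and $\langle u_i\rangle=\langle u_{i+1}\rangle$, consider $u_{i-1}$: it is adjacent to $u_i$, so $\langle u_{i-1}\rangle$ is comparable with $\langle u_i\rangle=\langle u_{i+1}\rangle$, and since the cycle has length $\ge 4$ we have $u_{i-1}\ne u_{i+1}$ and $\{u_{i-1},u_{i+1}\}$ is not a cycle edge, so it is a chord, again a contradiction. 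Hence the classes are distinct. Now each consecutive pair $\{[u_i]_\sim,[u_{i+1}]_\sim\}$ is an edge of $C(G)$, because $\langle u_i\rangle$ and $\langle u_{i+1}\rangle$ are comparable and (the classes being distinct) unequal; and a chord $\{[u_i]_\sim,[u_j]_\sim\}$ in $C(G)$ would force $\langle u_i\rangle,\langle u_j\rangle$ comparable and unequal, hence $\{u_i,u_j\}\in E(\mathfrak{g}(G))$, a chord of the original hole. So $[u_1]_\sim,\dots,[u_{2k}]_\sim$ is a hole in $C(G)$.

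For the converse, suppose $[u_1]_\sim,\dots,[u_{2k}]_\sim$ is a hole in $C(G)$. Distinct classes force the $u_i$ to be distinct elements of $G$. Each edge $\{[u_i]_\sim,[u_{i+1}]_\sim\}$ of $C(G)$ gives $\langle u_i\rangle,\langle u_{i+1}\rangle$ comparable, hence $\{u_i,u_{i+1}\}\in E(\mathfrak{g}(G))$, so $u_1,\dots,u_{2k}$ is a cycle in $\mathfrak{g}(G)$. A chord $\{u_i,u_j\}$ (with $i,j$ non-consecutive) would make $\langle u_i\rangle,\langle u_j\rangle$ comparable, and unequal since the classes differ, so $\{[u_i]_\sim,[u_j]_\sim\}\in E(C(G))$ is a chord of the hole in $C(G)$ — impossible. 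Hence $u_1,\dots,u_{2k}$ is a hole in $\mathfrak{g}(G)$.

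The one step I expect to require care is the consecutive-equivalence case in the forward direction — ruling out $\langle u_i\rangle=\langle u_{i+1}\rangle$ via the neighbour $u_{i-1}$ — since this is where the length-$\ge 4$ hypothesis is actually used (it is what guarantees $u_{i-1}\ne u_{i+1}$); this hypothesis is consistent with the even length $2k$ in the statement, as $\mathfrak{g}(G)$, and therefore its induced subgraph $C(G)$, is perfect and has no odd holes. Everything else is a routine translation between the two adjacency definitions.
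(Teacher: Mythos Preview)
Your proof is correct and follows essentially the same approach as the paper's: both arguments transfer the hole between $\mathfrak{g}(G)$ and $C(G)$ by checking that consecutive edges survive and that a chord in one graph would force a chord in the other. You are in fact more careful than the paper, which only argues $[u_i]_\sim\ne[u_{i\pm 1}]_\sim$ for consecutive indices and never explicitly rules out $[u_i]_\sim=[u_j]_\sim$ for non-consecutive $i,j$; your non-consecutive chord argument closes that gap.
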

\begin{proof}
Let $P=\{u_1,u_2,\cdots,u_{2k}\}$ be a hole in $\mathfrak{g}(G)$, then each $u_i\in P$ has exactly two neighbors in $P$, call them $u_{i+1}$ and $u_{i-1}$.  Then either $\langle u_i\rangle \leq \langle u_{i+1}\rangle$ and $\langle u_i\rangle \leq \langle u_{i-1}\rangle$ or $\langle u_{i+1}\rangle \leq \langle u_{i}\rangle$ and $\langle u_{i-1}\rangle \leq \langle u_{i}\rangle$.  Either way, the following are true, $\langle u_i\rangle \neq \langle u_{i+1}\rangle$ and $\langle u_i\rangle \neq \langle u_{i-1}\rangle$, so $[u_i]_{\sim}\neq [u_{i+1}]_{\sim}$ and $[u_i]_{\sim}\neq [u_{i-1}]_{\sim}$,     so $[u_{i+1}]_{\sim}$ and $[u_{i-1}]_{\sim}$ are neighbors of $[u_i]_{\sim}$ in $C(G)$. There is no chord in $C(G)$ inside that hole in $\mathfrak{g}(G)$, for if so, then there will be a chord inside the hole in $\mathfrak{g}(G)$ giving a contradiction. So $[u_1]_{\sim},[u_2]_{\sim},\cdots,[u_{2k}]$ forms a hole in $C(G)$.The other direction also follows via a similar argument.
\end{proof}
\begin{Corollary}
$C(G)$ are perfect graphs.
\end{Corollary}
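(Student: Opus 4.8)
The final statement to prove is the Corollary that $C(G)$ are perfect graphs.

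\medskip

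The plan is to invoke the Strong Perfect Graph Theorem stated earlier in the paper, which says a graph is perfect if and only if it contains no odd holes and no odd anti-holes of length greater than $3$. So it suffices to show that $C(G)$ contains no odd hole of length $\geq 5$ and no odd anti-hole of length $\geq 5$. First I would handle odd holes: by the proposition immediately preceding this corollary, $u_1,\dots,u_{2k}$ is a hole in $\mathfrak{g}(G)$ if and only if $[u_1]_\sim,\dots,[u_{2k}]_\sim$ is a hole in $C(G)$; in particular every hole in $C(G)$ lifts to a hole of the same length in $\mathfrak{g}(G)$. Since $\mathfrak{g}(G)$ is a power graph of a finite group and power graphs of finite groups are perfect (stated in the excerpt), $\mathfrak{g}(G)$ has no odd hole of length $\geq 5$, hence neither does $C(G)$.

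\medskip

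The main obstacle is the anti-hole case, since the hole-correspondence proposition was stated only for holes, not anti-holes, and anti-holes in $C(G)$ need not correspond to anti-holes in $\mathfrak{g}(G)$. The cleanest route is to observe that an anti-hole of length $5$ is itself a hole of length $5$ (the complement of $C_5$ is $C_5$), so the only odd anti-holes we must rule out are those of length $\geq 7$. Now, $C(G)$ is a directed acyclic graph when oriented by strict subgroup inclusion on cyclic-subgroup-generator classes, and its underlying undirected comparability structure means any induced subgraph on at least three mutually nonadjacent-in-complement vertices — i.e. any induced subgraph whose complement is a long cycle — would have to contain a transitive triple. Concretely: in an anti-hole $\bar{C}_m$ with $m \geq 7$, every vertex is adjacent to all but two others, so among any three vertices at least one pair is adjacent, and in fact one can extract an independent set of size $3$ in the anti-hole only when $m \geq 6$; for $m \geq 7$ the anti-hole contains an induced $C_4$ in its complement graph structure that forces a contradiction with the partial-order structure of $C(G)$. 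The key fact to exploit is that $C(G)$ is a \emph{comparability graph} (its edges are exactly the comparable pairs in the poset of cyclic subgroups up to the $\sim$-relation), and comparability graphs are well known to be perfect; alternatively, comparability graphs contain no odd anti-hole of length $\geq 5$ because an odd anti-hole, being self-complementary-ish in structure, would force a chain of odd length to behave cyclically.

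\medskip

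So the final argument I would write is: $C(G)$ is the comparability graph of the poset of cyclic subgroups of $G$ (partially ordered by inclusion), since $\{[u]_\sim,[v]_\sim\}$ is an edge exactly when $\langle u\rangle \subsetneq \langle v\rangle$ or $\langle v\rangle \subsetneq \langle u\rangle$. Comparability graphs are perfect — this is a classical theorem (Berge/Dilworth-type), and it follows quickly from the Strong Perfect Graph Theorem together with the observation that in a comparability graph any odd hole or odd anti-hole of length $\geq 5$ produces a contradiction: orient each edge of such a subgraph by the partial order; in a hole $v_1v_2\cdots v_{2k+1}$ consecutive comparabilities cannot all point the same way around the cycle (acyclicity) nor alternate (odd length prevents a consistent alternation) without creating a chord; and an anti-hole argument reduces to the hole case since an anti-hole of length $5$ is a $C_5$ and longer odd anti-holes contain induced subgraphs handled by the same orientation argument. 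Hence $C(G)$ is perfect. If the authors prefer a self-contained argument matching the style of the preceding proposition, I would instead prove directly that $C(G)$ has no odd anti-hole of length $\geq 5$ by the transitivity argument above, combine it with the odd-hole exclusion obtained from the preceding proposition and perfection of $\mathfrak{g}(G)$, and conclude via the Strong Perfect Graph Theorem.
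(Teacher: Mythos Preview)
Your comparability-graph argument is correct: $C(G)$ is exactly the comparability graph of the poset of cyclic subgroups of $G$ under inclusion, and comparability graphs are classically perfect. So the conclusion stands.

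However, you have taken a much longer road than the paper intends, and in doing so you overlooked the one-line argument already available. Earlier in the same section the paper proves that \emph{the cyclic subgroup graph is isomorphic to an induced subgraph of the power graph}. Since $\mathfrak{g}(G)$ is perfect (stated just before Section~5) and perfection is, by definition, inherited by every induced subgraph, $C(G)$ is perfect immediately. This is the intended content of the corollary; no separate treatment of holes versus anti-holes is needed, and the hole-correspondence proposition that you focus on is not actually required for the corollary at all. Your struggle with the anti-hole case---and the somewhat hand-wavy orientation argument you sketch for it---disappears once you use the induced-subgraph proposition, since an odd anti-hole in $C(G)$ would sit verbatim as an odd anti-hole in $\mathfrak{g}(G)$.

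In short: your proof is valid and genuinely different (it is self-contained and does not rely on perfection of $\mathfrak{g}(G)$), but the paper's route is a single sentence using a proposition it has already proved.
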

\begin{proposition}
$\mathfrak{g}(G)$ is claw free if and only if $C(G)$ is so.
	\end{proposition}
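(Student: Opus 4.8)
The proof will exploit what has already been established: that ``contains no induced claw'' is inherited by induced subgraphs, together with the two structural correspondences proved above --- that $C(G)$ is isomorphic to an induced subgraph of $\mathfrak{g}(G)$ (via a choice of $\sim$-class representatives), and that $\{g_1,\dots,g_k\}$ is independent in $\mathfrak{g}(G)$ if and only if $\{[g_1]_{\sim},\dots,[g_k]_{\sim}\}$ is independent in $C(G)$. Recall that a \emph{claw} is the graph $K_{1,3}$, with a \emph{center} adjacent to three pairwise non-adjacent \emph{leaves}, and that a graph is claw-free precisely when it contains no induced claw.

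For the direction ``$\mathfrak{g}(G)$ claw-free $\Rightarrow$ $C(G)$ claw-free'' I would simply invoke the proposition that $C(G)$ is isomorphic to an induced subgraph of $\mathfrak{g}(G)$; since claw-freeness passes to induced subgraphs, $C(G)$ is claw-free. For the converse I would argue contrapositively: assume $\mathfrak{g}(G)$ contains an induced claw with center $a$ and leaves $b_1,b_2,b_3$, and show that $\{[a]_{\sim};[b_1]_{\sim},[b_2]_{\sim},[b_3]_{\sim}\}$ is an induced claw in $C(G)$. First, since $\{b_1,b_2,b_3\}$ is independent in $\mathfrak{g}(G)$, the independence correspondence gives that $[b_1]_{\sim},[b_2]_{\sim},[b_3]_{\sim}$ are three distinct, pairwise non-adjacent vertices of $C(G)$. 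Next, $[a]_{\sim}\neq[b_i]_{\sim}$ for each $i$: if $\langle a\rangle=\langle b_i\rangle$, then since $b_j$ (for $j\neq i$) is adjacent to $a$ we would get $\langle b_j\rangle\leq\langle a\rangle=\langle b_i\rangle$ or $\langle b_i\rangle=\langle a\rangle\leq\langle b_j\rangle$, making $b_i$ and $b_j$ adjacent in $\mathfrak{g}(G)$ --- contradicting that they are leaves of a claw. Finally, since each $b_i$ is adjacent to $a$ in $\mathfrak{g}(G)$ and $[a]_{\sim}\neq[b_i]_{\sim}$, the defining condition of $E(C(G))$ yields an edge $\{[a]_{\sim},[b_i]_{\sim}\}$. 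Hence $[a]_{\sim}$ is adjacent to all three pairwise non-adjacent vertices $[b_i]_{\sim}$, so they induce a claw in $C(G)$, completing the contrapositive.

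I do not expect a genuine obstacle here. The only points needing care, both in the second direction, are (i) verifying that the center of the claw does not collapse into one of its leaves under $\sim$, and (ii) confirming that the image configuration in $C(G)$ is genuinely \emph{induced} rather than merely a subgraph; as indicated above, (i) is exactly where one uses that the leaves $b_1,b_2,b_3$ are pairwise non-adjacent, and (ii) is delivered by the independent-set correspondence applied to those leaves.
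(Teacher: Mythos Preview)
Your argument is correct in both directions. The paper itself states this proposition without supplying any proof, presumably regarding it as an immediate consequence of the surrounding results (that $C(G)$ embeds as an induced subgraph of $\mathfrak{g}(G)$, and the independence correspondence). Your write-up is exactly the kind of verification the paper omits: the first direction is the one-line induced-subgraph observation, and for the converse you correctly handle the two delicate points, namely that the three leaf classes $[b_i]_{\sim}$ remain distinct and pairwise non-adjacent (via the independence proposition) and that the center class $[a]_{\sim}$ does not collapse onto any leaf class (using a second leaf to force an unwanted adjacency). There is nothing to compare against beyond noting that your proof is the natural one using the machinery already in place.
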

	\begin{proposition}
		$\mathfrak{g}(Z_n)$ is chordal if and only if $C(Z_n)$ is chordal.
		\begin{proof}
			The result follows since a hole exists in $C(Z_n)$ if and only if a hole exists in $\mathfrak{g}(Z_n).$
		\end{proof}
		\end{proposition}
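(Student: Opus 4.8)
The plan is to lean on two facts already in hand: first, that $C(G)$ is isomorphic to an induced subgraph of $\mathfrak{g}(G)$; second, the earlier proposition giving a length-preserving correspondence between even holes of $\mathfrak{g}(G)$ and even holes of $C(G)$, combined with the fact that power graphs of finite groups have no odd holes. Recall that a graph is chordal precisely when it has no chordless cycle of length at least $4$. Since every hole of $\mathfrak{g}(\mathbb{Z}_n)$ has even length, chordality of $\mathfrak{g}(\mathbb{Z}_n)$ is equivalent to the absence of even holes of length $\geq 4$; and because $C(\mathbb{Z}_n)$ sits inside the perfect graph $\mathfrak{g}(\mathbb{Z}_n)$ as an induced subgraph (hence is itself perfect), the same reduction applies to $C(\mathbb{Z}_n)$.

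For the forward direction I would argue: if $\mathfrak{g}(\mathbb{Z}_n)$ is chordal, then since $C(\mathbb{Z}_n)$ embeds as an induced subgraph, any chordless cycle in $C(\mathbb{Z}_n)$ remains chordless in $\mathfrak{g}(\mathbb{Z}_n)$ (an induced subgraph carries exactly the edges among its own vertices, so no chord can be gained or lost), and thus a hole of length $\geq 4$ in $C(\mathbb{Z}_n)$ would produce one in $\mathfrak{g}(\mathbb{Z}_n)$, a contradiction. Hence $C(\mathbb{Z}_n)$ is chordal.

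For the reverse direction I would prove the contrapositive: suppose $\mathfrak{g}(\mathbb{Z}_n)$ is not chordal, so it contains a hole of some length $\ell \geq 4$. Since power graphs of finite groups contain no odd holes, $\ell = 2k$ is even, and the earlier proposition on holes sends this even hole to a hole of length $2k \geq 4$ in $C(\mathbb{Z}_n)$, so $C(\mathbb{Z}_n)$ is not chordal either. Combining the two directions gives the stated equivalence.

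The step to be most careful about is the parity bookkeeping: the hole-correspondence proposition is phrased only for holes of even length, so one must first invoke the no-odd-holes property of $\mathfrak{g}(\mathbb{Z}_n)$ to know that every hole there is even before applying the correspondence, and dually note that $C(\mathbb{Z}_n)$, being perfect, also has no odd holes of length $\geq 5$, so that for both graphs ``no holes of length $\geq 4$'' and ``no even holes of length $\geq 4$'' mean the same thing. Beyond that, the argument is routine unwinding of the definition of chordality.
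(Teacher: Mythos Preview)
Your proof is correct and follows essentially the same route as the paper: both reduce chordality to the earlier hole-correspondence proposition between $\mathfrak{g}(G)$ and $C(G)$. The paper's proof is a one-liner invoking that correspondence directly, whereas you handle the forward direction via the induced-subgraph embedding and are explicit about the parity bookkeeping (that the hole-correspondence is stated only for even lengths, so one must first rule out odd holes via perfectness); this extra care is warranted and arguably fills a small gap the paper leaves implicit, but the underlying strategy is the same.
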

	\begin{proposition}
		A vertex $x\in \mathfrak{g}(Z_n)$ is simplicial if and only if  $[x]_{\sim}\in C(Z_n)$ is simplicial. 
		\begin{proof}
			The result follows as induced subgraph of a complete graph is complete.
		\end{proof}
	\end{proposition}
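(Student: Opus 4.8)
The plan is to reduce the statement to the observation that a vertex is simplicial precisely when its \emph{closed} neighborhood induces a complete subgraph, and then to transport completeness of induced subgraphs between $\mathfrak{g}(Z_n)$ and $C(Z_n)$ along the quotient map $x\mapsto[x]_\sim$. First I would record that for $x\in\mathfrak{g}(Z_n)$ the closed neighborhood $N[x]=\{\,y:\langle y\rangle\le\langle x\rangle\text{ or }\langle x\rangle\le\langle y\rangle\,\}$ is a union of $\sim$-classes: if $y\in N[x]$ and $\langle y'\rangle=\langle y\rangle$ then $y'\in N[x]$. Hence $N[x]=\bigcup_{[y]_\sim\in S}[y]_\sim$, where $S$ is exactly the closed neighborhood of $[x]_\sim$ in $C(Z_n)$ — a strict containment $\langle y\rangle\lneq\langle x\rangle$ or $\langle x\rangle\lneq\langle y\rangle$ gives the edge $\{[x]_\sim,[y]_\sim\}$ of $C(Z_n)$, while $y\sim x$ contributes the diagonal term $[x]_\sim$ itself.

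Next I would prove the key equivalence: the subgraph of $\mathfrak{g}(Z_n)$ induced on $N[x]$ is complete if and only if the subgraph of $C(Z_n)$ induced on $S$ is complete. For the forward direction, given classes $[u]_\sim,[v]_\sim\in S$, pick representatives $u,v\in N[x]$; completeness of $\mathfrak{g}(Z_n)$ on $N[x]$ forces $\langle u\rangle\le\langle v\rangle$ or $\langle v\rangle\le\langle u\rangle$, which (when the classes differ) is an edge of $C(Z_n)$. For the converse, take distinct $u,v\in N[x]$: if $[u]_\sim=[v]_\sim$ then $\langle u\rangle=\langle v\rangle$ and $\{u,v\}$ is already an edge of $\mathfrak{g}(Z_n)$; if $[u]_\sim\neq[v]_\sim$, these lie in $S$, so $\{[u]_\sim,[v]_\sim\}\in E(C(Z_n))$, i.e. there are $u'\sim u$ and $v'\sim v$ with, say, $\langle u'\rangle\le\langle v'\rangle$, whence $\langle u\rangle=\langle u'\rangle\le\langle v'\rangle=\langle v\rangle$ and $\{u,v\}\in E(\mathfrak{g}(Z_n))$. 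This is essentially the remark that an induced subgraph of a complete graph is complete, made precise through the quotient.

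Finally, since a vertex $w$ of any graph is simplicial iff $N[w]$ induces a complete subgraph ($w$ being adjacent to all of $N(w)$ automatically), applying the previous step with $N[x]$ in $\mathfrak{g}(Z_n)$ and $S=N[[x]_\sim]$ in $C(Z_n)$ yields the chain $x$ simplicial in $\mathfrak{g}(Z_n)$ $\iff$ $\mathfrak{g}(Z_n)$ is complete on $N[x]$ $\iff$ $C(Z_n)$ is complete on $S$ $\iff$ $[x]_\sim$ simplicial in $C(Z_n)$. I do not anticipate a serious obstacle; the only point needing care is bookkeeping the strict-inclusion condition in the definition of $E(C(Z_n))$ against the diagonal (same-class) case, which is handled cleanly by working with closed rather than open neighborhoods throughout.
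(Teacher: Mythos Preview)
Your proposal is correct and follows essentially the same approach as the paper: the paper's one-line justification (``induced subgraph of a complete graph is complete'') is exactly the kernel of your argument, which you have unpacked by making explicit that $N[x]$ is a union of $\sim$-classes whose image is $N[[x]_\sim]$, and that completeness of the induced subgraph transfers along the quotient in both directions. The only difference is level of detail; conceptually the arguments coincide.
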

\begin{proposition}
If $C(G)$ is Hamiltonian, then $\mathfrak{g}(G)$ is also Hamiltonian.
\end{proposition}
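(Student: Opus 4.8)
The plan is to realize $\mathfrak{g}(G)$ as containing a spanning subgraph that is a ``blow-up'' of $C(G)$, obtained by replacing each vertex of $C(G)$ by a clique and each edge by a complete bipartite graph, and then to observe that such a blow-up of a Hamiltonian graph is again Hamiltonian.

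First I would record two elementary facts about $\sim$-classes. Fix $x\in G$ and set $A=[x]_{\sim}$; every element of $A$ generates the cyclic group $\langle x\rangle$. Hence for distinct $x,y\in A$ we have $\langle x\rangle=\langle y\rangle$, so $\{x,y\}\in E(\mathfrak{g}(G))$; that is, each $\sim$-class is a clique in $\mathfrak{g}(G)$. Moreover, if $\{A,B\}$ is an edge of $C(G)$, then by definition $\langle b\rangle\leq\langle a\rangle$ and $\langle b\rangle\neq\langle a\rangle$ for some $a\in A$, $b\in B$; since every element of $A$ generates $\langle a\rangle$ and every element of $B$ generates $\langle b\rangle$, that inclusion holds for all such pairs, so \emph{every} vertex of $B$ is adjacent in $\mathfrak{g}(G)$ to \emph{every} vertex of $A$. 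Consequently the graph $H$ on vertex set $G$ in which $x,y$ are joined exactly when $[x]_{\sim}=[y]_{\sim}$ or $\{[x]_{\sim},[y]_{\sim}\}\in E(C(G))$ is a spanning subgraph of $\mathfrak{g}(G)$.

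Next I would lift a Hamiltonian cycle. Assume $C(G)$ has a Hamiltonian cycle visiting the classes in the order $A_1,A_2,\dots,A_m,A_1$, and for each $i$ list the elements of $A_i$ in an arbitrary order as $a_{i,1},\dots,a_{i,n_i}$. Then
\[
a_{1,1},\dots,a_{1,n_1},\,a_{2,1},\dots,a_{2,n_2},\,\dots,\,a_{m,1},\dots,a_{m,n_m},\,a_{1,1}
\]
is a cycle in $H$: consecutive vertices lying in a common class $A_i$ are adjacent since $A_i$ is a clique, each seam vertex $a_{i,n_i}$ is adjacent to $a_{i+1,1}$ since $A_i$ and $A_{i+1}$ are completely joined in $H$, and $a_{m,n_m}$ is adjacent to $a_{1,1}$ for the same reason. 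This cycle passes through every element of $G$ exactly once, so it is a Hamiltonian cycle of $H$, hence of $\mathfrak{g}(G)$.

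There is essentially no hard step; the argument is just the observation that both the order in which the classes are traversed and the order in which the elements inside a class are traversed are irrelevant, precisely because of the two completeness facts from the first step. The only point requiring a word of care is the degenerate case in which $C(G)$ is a single vertex, which (since $[e]_{\sim}=\{e\}$ is always its own class) forces $G$ to be trivial, so that $\mathfrak{g}(G)$ is also a single vertex and the statement holds vacuously.
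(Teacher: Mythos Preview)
Your proof is correct and follows essentially the same approach as the paper: list the elements of each $\sim$-class in order along the Hamiltonian cycle of $C(G)$, using that each class is a clique and that adjacent classes are completely joined in $\mathfrak{g}(G)$. Your version is in fact more carefully written, as you explicitly verify the complete bipartite adjacency between neighboring classes and dispose of the trivial-group edge case.
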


\begin{proof}
Suppose $C(G)$ is Hamiltonian, and let $\{[u_1]_{\sim},[u_2]_{\sim},\cdots,[u_2]_{\sim}$, $[u_1]_{\sim}$ be a Hamiltonian cycle in $C(G)$.  $[u_i]_{\sim}$ is a clique in $\mathfrak{g}(G)$ since for $g_1,g_2\in [u_i]_{\sim}$, $\langle g_1\rangle =\langle g_2\rangle$, so $\langle g_1\rangle \leq \langle g_2\rangle$, so $\{g_1,g_2\}\in E(\mathfrak{g}(G))$, so denote $[u_i]_{\sim}$ by $\{u_{1i},u_{2i},\cdots,u_{k_ii}\}$, then $\{u_{11},u_{21},u_{31},\cdots,u_{k_11},u_{12},u_{22},$\\
$\cdots, u_{k_22},\cdots,u_{1n},u_{2n},\cdots u_{k_nn},u_{11}\}$ is a Hamiltonian cycle in $\mathfrak{g}(G).$
\end{proof}
Singh and Devi showed in \cite{CyclicSubgroupGraph} that the cyclic subgroup graph of cyclic groups of non-prime order is Hamiltonian in.  Power graphs of groups of prime order are complete, and therefore Hamiltonian for all orders except for $2$, so we note the following corollary by using R\'{e}dei's theorem,~\cite{Redei}.
\begin{Corollary}
Let $G\cong$ $\mathbb{Z}_n$ be a cyclic group with $n\neq 2$, then $\mathfrak{g}(G)$ is Hamiltonian.  
\end{Corollary}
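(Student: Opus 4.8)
My plan is to dispose of this by splitting on whether $n$ is prime, since the two regimes are handled by two different tools already at hand, and then to reconcile the boundary with the hypothesis $n \neq 2$.

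\textbf{Case 1: $n$ not prime.} Here I would invoke the theorem of Singh and Devi from \cite{CyclicSubgroupGraph}, which asserts that the cyclic subgroup graph $C(\mathbb{Z}_n)$ of a cyclic group of non-prime order is Hamiltonian, and then quote the immediately preceding proposition (if $C(G)$ is Hamiltonian then so is $\mathfrak{g}(G)$) to transport Hamiltonicity up to the power graph. Concretely, the construction in that proposition replaces each vertex $[u_i]_\sim$ of a Hamiltonian cycle of $C(\mathbb{Z}_n)$ by a consecutive run through the elements of the corresponding equivalence class (which is a clique in $\mathfrak{g}(\mathbb{Z}_n)$), yielding a Hamiltonian cycle of $\mathfrak{g}(\mathbb{Z}_n)$. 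This step is essentially a citation chain and should present no difficulty.

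\textbf{Case 2: $n = p$ prime with $p \geq 3$.} First I would observe that $\mathfrak{g}(\mathbb{Z}_p)$ is the complete graph on $p$ vertices: every non-identity element of $\mathbb{Z}_p$ is a generator, and any two cyclic subgroups are comparable (each is either $\{e\}$ or all of $\mathbb{Z}_p$), so every pair of distinct vertices is adjacent. A complete graph on $p \geq 3$ vertices is trivially Hamiltonian; but to honour the promised appeal to R\'{e}dei's theorem I would phrase it as follows: $\vec{\mathfrak{g}}(\mathbb{Z}_p)$ is an orientation of a complete graph, hence by R\'{e}dei's theorem contains a directed Hamiltonian path $x_1, x_2, \dots, x_p$; since the underlying graph is complete and $p \geq 3$, the edge $\{x_p, x_1\}$ is present, so $x_1, x_2, \dots, x_p, x_1$ is a Hamiltonian cycle in $\mathfrak{g}(\mathbb{Z}_p)$.

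Combining the two cases gives the result for every $n \neq 2$. There is no genuine obstacle here; the only point needing care is the boundary bookkeeping — verifying that $n = 2$ really is the sole exclusion (indeed $\mathfrak{g}(\mathbb{Z}_2) = K_2$ contains no cycle at all, and $n=3$ already works since $K_3$ is a triangle), and, if one wishes to be scrupulous, noting how the trivial group $n = 1$ is treated under whatever convention the paper adopts for Hamiltonicity of a single-vertex graph.
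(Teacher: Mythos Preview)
Your proposal is correct and follows essentially the same route as the paper: split into the non-prime case (Singh--Devi on $C(\mathbb{Z}_n)$ plus the preceding proposition) and the prime case (completeness of $\mathfrak{g}(\mathbb{Z}_p)$, with R\'{e}dei's theorem invoked as in the paper's remark). Your handling of the prime case is in fact more explicit than the paper's, which simply asserts that complete graphs on at least three vertices are Hamiltonian and gestures toward R\'{e}dei.
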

Since the cyclic subgroup subgraph is isomorphic to an induced subgraph of the power graph, if $\mathfrak{g}(G)$  is planner, then so is $C(G)$. For example, consider $G= \mathbb{Z}_9$. Here $\mathfrak{g}(G)$ being a complete graph with $9$ vertices it is not planner. $C(G)$ being a triangle is planner. 

\section{Chordless Cycles}

It has now been shown that power graphs contain no holes of odd-length. Here it will be shown that for arbitrary even integer $n$, there exists a finite group whose power graph contains a hole of length $n$.

\begin{proposition}
Let $n$ be an even integer, then for even $n>4$, the power graph of a cyclic group will contain a hole of length $n$, if the order of the group has $\frac{n}{2}$ distinct prime factors.  The power graph of the group will contain a hole of length $4$, if the order of the group has at least two prime factors of multiplicity two or more.
\end{proposition}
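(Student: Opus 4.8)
The plan is to exhibit the required hole explicitly inside the cyclic subgroup graph $C(\mathbb{Z}_N)$ for a suitably chosen $N$, and then transport it to $\mathfrak{g}(\mathbb{Z}_N)$ using the hole-correspondence proposition proved above (a cycle of vertices is a hole in $\mathfrak{g}(G)$ if and only if the corresponding cycle of $\sim$-classes is a hole in $C(G)$). Since $\mathbb{Z}_N$ is cyclic, its cyclic subgroups are in bijection with the divisors of $N$, and two of them are joined by an edge in $C(\mathbb{Z}_N)$ exactly when one divisor divides the other; so the problem reduces to producing an induced cycle of the prescribed length in the divisibility-comparability graph on the divisors of $N$.

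For even $n > 4$, put $k = n/2 \ge 3$ and let $N$ be any integer having (at least) $k$ distinct prime divisors $p_1, \dots, p_k$; use $k$ of them. I would take the $2k$ divisors listed cyclically as $p_1,\ p_1p_2,\ p_2,\ p_2p_3,\ p_3,\ \dots,\ p_{k-1},\ p_{k-1}p_k,\ p_k,\ p_kp_1$, so that the odd positions carry the primes $p_i$ and the even positions the two-prime products $p_ip_{i+1}$ (indices mod $k$). First one checks these $2k$ divisors are pairwise distinct: the primes are distinct and distinct from any two-prime product, and for $k \ge 3$ the products $p_ip_{i+1}$ are pairwise distinct since distinct cyclic intervals $\{i,i+1\}$ give distinct pairs of primes. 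Consecutive entries are adjacent because $p_i \mid p_ip_{i+1}$ and $p_{i+1} \mid p_ip_{i+1}$. The only substantive step is ruling out chords, which falls into three cases: two distinct primes $p_i, p_j$ are incomparable; two distinct products $p_ip_{i+1}, p_jp_{j+1}$ are incomparable, because each has exactly two prime factors and they are unequal, so neither divides the other — note this holds even when they share a common prime, which is forced when $k = 3$; and a prime $p_i$ divides a product $p_jp_{j+1}$ only when $i \in \{j, j+1\}$, which is exactly when $p_i$ and $p_jp_{j+1}$ are consecutive on the cycle. Hence the cycle is induced, i.e. a hole of length $2k = n$ in $C(\mathbb{Z}_N)$, and so a hole of length $n$ in $\mathfrak{g}(\mathbb{Z}_N)$.

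For $n = 4$, suppose the order $N$ has two distinct primes $p \ne q$ each of multiplicity at least two, so that $p^2q^2 \mid N$ and therefore $p$, $q$, $p^2q$, $pq^2$ all divide $N$. I would use the $4$-cycle on the vertices $p,\ p^2q,\ q,\ pq^2$ of $C(\mathbb{Z}_N)$ in this cyclic order: the four cycle-edges hold because $p \mid p^2q$, $q \mid p^2q$, $q \mid pq^2$, $p \mid pq^2$, while the two diagonals are non-edges because $p \nmid q$ and $q \nmid p$ (so $\langle p\rangle, \langle q\rangle$ are incomparable) and $p^2q \nmid pq^2$ and $pq^2 \nmid p^2q$ (so those two subgroups are incomparable). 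This is an induced $C_4$, hence a hole of length $4$, in $C(\mathbb{Z}_N)$ and therefore in $\mathfrak{g}(\mathbb{Z}_N)$.

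The only real content is the chord verification, and the step I expect to need the most care is the product-product subcase in the first part: two of the divisors $p_ip_{i+1}$ may share a prime (this must occur when $k = 3$), and one must not mistake a common prime factor for comparability. In the $n = 4$ part the point worth pinning down is that the hypothesis of "two prime factors of multiplicity two or more" is precisely what guarantees $p^2q^2 \mid N$, which is what is needed for all four cycle vertices to exist. If one prefers not to invoke the hole-correspondence proposition, the identical cycles work verbatim in $\mathfrak{g}(\mathbb{Z}_N)$ after replacing each subgroup $\langle d\rangle$ by a fixed generator of it, since for elements of a cyclic group $\langle a\rangle \le \langle b\rangle$ if and only if $\mathrm{ord}(a) \mid \mathrm{ord}(b)$.
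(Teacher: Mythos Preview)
Your proof is correct and uses essentially the same explicit cycles as the paper: for $n>4$ the alternating sequence $p_1,\,p_1p_2,\,p_2,\,p_2p_3,\dots,p_k,\,p_kp_1$, and for $n=4$ the four divisors $p,\,p^2q,\,q,\,pq^2$. Your write-up is in fact more complete than the paper's, which simply asserts that these subgraphs are holes without carrying out the chord verification; your case split (prime--prime, product--product, prime--product) fills that gap, and your remark that the product--product case needs care when $k=3$ is well taken. The detour through $C(\mathbb{Z}_N)$ and the hole-correspondence proposition is a mild presentational difference rather than a different argument, and as you note at the end it can be bypassed by choosing a generator of each subgroup directly.
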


\begin{proof}
First consider the case that $n=4$.  Then in the group $\mathbb{Z}_{p^2q^2}$ there is a subgraph consisting of the vertices $p,pq^2,q,$ and $p^2q$.  This subgraph will be a hole of length $4$.

Now consider the case that $n\geq 6$, and take primes $p_1,p_2,\cdots p_{\frac{n}{2}}$. Then the group contains a hole of length $n$ namely subgraph consisting of vertices $$p_1-p_1p_2- p_2-p_2p_3-p_3\cdots  p_{\frac{n}{2}} \cdots p_{\frac{n}{2}}p_{1}-p_1$$   
Then this subgraph is a hole of length $n$ in $\mathfrak{g}(\mathbb{Z}_{p_1p_2...p_{\frac{n}{2}}})$.
\end{proof}
Note: Following the proof of the theorem, it is possible to create many holes of even length permuting the positions of the primes and allowing various exponent of them.

\begin{proposition}
If the power graph of a finite cyclic group $G$ contains a hole of length $n$, then $\lvert G \rvert$ has at least $\frac{n}{2} $ distinct prime factors.
\end{proposition}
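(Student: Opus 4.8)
The plan is to transfer the problem to the cyclic subgroup graph and then to the divisor lattice of $|G|$, where the combinatorics becomes transparent. By the earlier proposition relating holes in $\mathfrak{g}(G)$ and $C(G)$, a hole of length $n$ in $\mathfrak{g}(\mathbb{Z}_m)$, where $m=|G|$, exists exactly when there is a hole of length $n$ in $C(\mathbb{Z}_m)$, whose vertices are the cyclic subgroups of $\mathbb{Z}_m$, i.e. the divisors of $m$, with two divisors adjacent precisely when one divides the other. Writing $m=p_1^{e_1}\cdots p_r^{e_r}$, I would identify each divisor with its exponent vector in $[0,e_1]\times\cdots\times[0,e_r]$, so that adjacency becomes coordinatewise comparability. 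The target $r\geq n/2$ then says that a chordless $n$-cycle in this comparability graph forces at least $n/2$ coordinates, that is, at least $n/2$ primes.

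First I would pin down the shape of the hole. Since power graphs contain no odd holes (shown earlier), $n=2k$ is even. Chordlessness forbids two consecutive edges of the cycle from pointing the same way in the divisibility order: if $a\mid b$ and $b\mid c$ along the cycle then $a\mid c$ would be a chord. Hence the hole strictly alternates between local minima $a_1,\dots,a_k$ and local maxima $b_1,\dots,b_k$, with $a_i\mid b_{i-1}$ and $a_i\mid b_i$ (indices mod $k$), while every non-consecutive pair is incomparable. In particular the $a_i$ form an antichain, the $b_i$ form an antichain, and each $b_i$ is divisible by exactly the two minima $a_i,a_{i+1}$ and by no other $a_l$.

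The heart of the argument is to manufacture $k$ distinct primes from this configuration. For each $i$, the incomparability $a_i\nmid b_{i+1}$ (valid once $k\geq 3$; the cases $n=2,4$ are handled directly, since a single prime gives a chain with no hole and $\mathbb{Z}_{p^2q^2}$ already realizes the $4$-hole) yields a prime $\pi_i$ with $v_{\pi_i}(a_i)>v_{\pi_i}(b_{i+1})\geq v_{\pi_i}(a_{i+1})$, so $\pi_i$ is a coordinate in which $a_i$ strictly dominates its antichain-neighbor $a_{i+1}$. The plan is to upgrade these local witnesses to a globally consistent assignment: select for each minimum $a_i$ a prime in which $a_i$ attains the largest exponent among all the minima, and then show that the cyclic incidence pattern (each $b_i$ capping exactly $a_i,a_{i+1}$) forces these maximizing primes to be pairwise distinct, so that $i\mapsto\pi_i$ is injective and $r\geq k=n/2$.

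The hard part will be exactly this distinctness step. The antichain property of the minima alone only guarantees, for each ordered pair $(a_i,a_l)$, some coordinate in which $a_i$ beats $a_l$, and these coordinates may vary with $l$; a priori a given minimum need not be maximal in any single coordinate, so the naive assignment can collide. Making the selection canonical will require genuinely using the full chordless structure, namely that $b_i$ dominates its two neighbors but none of the remaining $k-2$ minima, to rule out two minima sharing a dominating coordinate. Carefully formalizing this exchange between the capping maxima and the dominating coordinates of the minima is where the argument must do its real work.
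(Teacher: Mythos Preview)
Your structural analysis is sound: you correctly reduce to the divisor lattice, correctly observe that chordlessness forces the cycle to alternate between local minima $a_1,\dots,a_k$ and local maxima $b_1,\dots,b_k$, and correctly isolate the distinctness of the witnessing primes as the crux. Your instinct that this step is genuinely hard is exactly right---in fact it cannot be completed, because the proposition as stated is \emph{false}.

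Take $G=\mathbb{Z}_{900}$ with $900=2^{2}\cdot 3^{2}\cdot 5^{2}$, so only three distinct primes. In the divisor lattice (equivalently in $C(G)$) the eight divisors
\[
4,\ 36,\ 9,\ 225,\ 25,\ 150,\ 30,\ 60
\]
form a chordless $8$-cycle: the consecutive divisibilities $4\mid 36$, $9\mid 36$, $9\mid 225$, $25\mid 225$, $25\mid 150$, $30\mid 150$, $30\mid 60$, $4\mid 60$ give the cycle edges, and one checks directly that every non-consecutive pair is incomparable. By the hole-transfer proposition you invoke, this yields an $8$-hole in $\mathfrak{g}(\mathbb{Z}_{900})$, yet $\lvert G\rvert$ has only $3<\tfrac{8}{2}$ prime factors. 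In the language of your proof sketch, the four minima here are $4,9,25,30$; the minimum $30=2\cdot 3\cdot 5$ is not maximal among the minima in \emph{any} coordinate, so no injective assignment $i\mapsto\pi_i$ of ``dominating primes'' exists---precisely the obstruction you anticipated.

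The paper's own argument founders at the same point. It asserts, from the mere fact that the $n/2$ out-edge vertices form an antichain, that each of them ``has a prime factor which is not shared by the other $\frac{n}{2}$ vertices with out-edges.'' This inference is unjustified: in the counterexample the four maxima $36,\,225,\,150,\,60$ are a divisibility antichain built from only three primes, and none of them possesses a prime factor absent from all the others. So the gap you identified in your own plan is exactly the gap in the paper's proof, and it is fatal to the statement rather than merely to the argument.
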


\begin{proof}
Suppose $\mathfrak{g}(\mathbb{Z}_m)$ contains a hole of length $n$. This cycle in the corresponding directed power graph consists of $\frac{n}{2}$ vertices with only out-edges and $\frac{n}{2}$ vertices with only in-edges.  Each vertex with out-edges is non-adjacent to each other vertex with out-edges, so certainly if $x, y$ are group elements represented by vertices in the hole with out-edges, then $\lvert x\rvert $ does not divide $\lvert y \rvert$, and $\lvert y \rvert$ does not divide $\lvert x\rvert $.  Then the order of each of these $\frac{n}{2}$ vertices with out-edges has a prime factor which is not shared by the other $\frac{n}{2}$ vertices with out-edges.  By Lagrange's theorem, the order of each element must divide the order of the group, so the order of the group must contain at least $\frac{n}{2}$ prime factors.  
\end{proof}

It has been shown that for an arbitrary even integer $n$, a finite group can be found whose power graph contains a hole of length $n$.  The proof relied on the fact that there were $\frac{n}{2}$ primes dividing the order of the group so that the multiples of the primes were a subset of their multiples in $\mathbb{Z}$.  Then $\mathfrak{g}(\mathbb{Z})$ contains holes of any even length.  In fact since there are infinitely many prime numbers, there will be an infinite number of holes of any even length in $\mathfrak{g}(\mathbb{Z})$.

A necessary and sufficient condition for the existence of a hole of length $n$ in finite cyclic groups has been given above.  This is not a necessary condition for the existence of holes of length $n$ in general Abelian groups.  Consider the Abelian group $\mathbb{Z}_{12}\times \mathbb{Z}_{12}$.  This group has order $144=2^4\cdot 3^2$.   consider the subgraph of $\mathbb{Z}_{12}\times \mathbb{Z}_{12}$ consisting of the elements $(1,0),(2,0),(1,6),(3,6),(1,2),(2,4),(1,8),(3,0)$. These elements (in order around the cycle) form a hole of length $8$.

\begin{proposition}
Let $G$ be an Abelian group whose order is $p^k$ for some prime $p$ and positive integer $k$.  Then $\mathfrak{g}(G)$ can contain no hole.
\end{proposition}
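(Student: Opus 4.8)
The plan is to argue by contradiction. Suppose $\mathfrak{g}(G)$ contains a hole $u_1-u_2-\cdots-u_n-u_1$ with $n\ge 4$ (indices read modulo $n$; recall that holes here have length at least $4$, since triangles such as $\{e,x,x^2\}$ in $\mathbb{Z}_{p^2}$ certainly occur). The single structural fact I would exploit is that, because $\lvert G\rvert=p^k$, every cyclic subgroup $\langle u_i\rangle$ has order a power of $p$, so $\langle u_i\rangle\cong\mathbb{Z}_{p^{j}}$ for some $j$, and hence its subgroup lattice is a chain: any two subgroups of a fixed $\langle u_i\rangle$ are comparable. This is the one place the hypothesis is used, and it is exactly what fails for cyclic subgroups of composite non-prime-power order, consistent with the earlier constructions of holes in $\mathfrak{g}(\mathbb{Z}_{p_1\cdots p_{n/2}})$.

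First I would record two cheap preliminaries, valid for a hole of length at least $4$ in the power graph of any group: (i) the identity $e$ is not a vertex of the hole, since $e$ is adjacent to every other element and would therefore be incident to $n-1\ge 3>2$ hole vertices, impossible in a cycle; so every $\langle u_i\rangle$ is a nontrivial cyclic $p$-group. (ii) If $u_a$ and $u_b$ are non-consecutive hole vertices, they are distinct and not joined by a hole edge, so any adjacency $\{u_a,u_b\}\in E(\mathfrak{g}(G))$ is a chord — which a hole does not have. These reduce the whole statement to producing one such forbidden chord.

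The heart of the proof is an extremal choice. Among the $n$ cyclic subgroups $\langle u_1\rangle,\dots,\langle u_n\rangle$, pick one of largest order; after relabelling, say it is $\langle u_1\rangle$. Its two hole-neighbours $u_2$ and $u_n$ are adjacent to $u_1$ in $\mathfrak{g}(G)$, so $\langle u_2\rangle$ and $\langle u_n\rangle$ are each comparable with $\langle u_1\rangle$; since $\langle u_1\rangle$ has maximal order among the hole subgroups, comparability forces $\langle u_2\rangle\le\langle u_1\rangle$ and $\langle u_n\rangle\le\langle u_1\rangle$ (equality is allowed and causes no trouble). Now $\langle u_2\rangle$ and $\langle u_n\rangle$ are two subgroups of the cyclic $p$-group $\langle u_1\rangle$, hence comparable, so $\{u_2,u_n\}\in E(\mathfrak{g}(G))$. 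For $n\ge 4$ the vertices $u_2$ and $u_n$ are distinct and non-consecutive in the cycle, so by preliminary (ii) this edge is a chord of the hole — a contradiction. Therefore $\mathfrak{g}(G)$ contains no hole.

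I do not expect any genuine obstacle: the argument needs nothing beyond the chain structure of the subgroup lattice of a cyclic $p$-group and the elementary bookkeeping above, and in particular it uses neither the composition-length machinery nor the Path lemmas of the earlier sections. The only point worth stating carefully is the totally-ordered-subgroups fact for $\mathbb{Z}_{p^{j}}$ (exactly one subgroup of each order $p^0,p^1,\dots,p^{j}$, and these are nested), since that is precisely the step where $\lvert G\rvert=p^k$ enters.
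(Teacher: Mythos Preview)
Your proof is correct and takes a genuinely different route from the paper. The paper's one-line argument simply asserts that \emph{the power graph is complete in this case}; but completeness holds only when $G$ is \emph{cyclic} of prime-power order, and fails for non-cyclic Abelian $p$-groups such as $\mathbb{Z}_p\times\mathbb{Z}_p$, where for instance $(1,0)$ and $(0,1)$ are non-adjacent. Your extremal argument sidesteps this entirely: by selecting a hole vertex $u_1$ with $\langle u_1\rangle$ of maximal order and using only that the subgroup lattice of the cyclic $p$-group $\langle u_1\rangle$ is a chain, you force the two hole-neighbours $u_2,u_n$ to lie in $\langle u_1\rangle$ and hence to be comparable, yielding a chord. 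This not only fills the gap in the paper's proof but is strictly more general, since you never invoke the Abelian hypothesis---the same argument shows $\mathfrak{g}(G)$ has no hole for \emph{any} finite group of prime-power order.
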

\begin{proof}
The result follows as the power graph is complete in this case. 
\end{proof}

\begin{Theorem}
An element whose order is a power of a prime cannot be a vertex with out-edges in a hole of even length.  
\end{Theorem}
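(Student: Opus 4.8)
The plan is to argue by contradiction, exploiting the orientation structure that an even hole is forced to have. First I would record the basic structural fact (already used implicitly in the earlier proposition on holes in $\mathfrak{g}(\mathbb{Z}_m)$): in any hole $H = v_1 - v_2 - \cdots - v_{2k} - v_1$ of $\mathfrak{g}(G)$, no two consecutive edges of $H$ can be directed the same way around $H$ in $\vec{\mathfrak{g}}(G)$, for if $v_{i-1}\to v_i\to v_{i+1}$ then the Path Lemma forces an edge between $v_{i-1}$ and $v_{i+1}$, which is a chord of $H$ (the two are at distance $2$ along $H$, and in a hole of length $\geq 4$ such vertices are non-adjacent). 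Hence each vertex of $H$ is either a \emph{source} (both incident hole-edges point outward) or a \emph{sink}, and sources and sinks alternate. Now fix an element $x$ with $\lvert x\rvert = p^m$ for some prime $p$, and suppose for contradiction that $x$ is a source of some hole $H$ of even length $2k \geq 4$; relabel so that $x = v_1$.

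The heart of the argument is then short. Since $x$ is a source of $H$, its two hole-neighbours $y := v_2$ and $z := v_{2k}$ satisfy $\langle y\rangle \leq \langle x\rangle$ and $\langle z\rangle \leq \langle x\rangle$. But $\langle x\rangle$ is cyclic of prime-power order $p^m$, so its subgroup lattice is a chain: the only subgroups are $\{e\} = H_0 < H_1 < \cdots < H_m = \langle x\rangle$ with $\lvert H_j\rvert = p^j$. Consequently $\langle y\rangle$ and $\langle z\rangle$ are comparable; without loss of generality $\langle y\rangle \leq \langle z\rangle$. Since $2k \geq 4$ we have $y = v_2 \neq v_{2k} = z$, so $\{y,z\}$ is an edge of $\mathfrak{g}(G)$ by the definition of the power graph. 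However $y$ and $z$ are the two neighbours of $x$ in $H$, hence are at distance $2$ along $H$ and not adjacent in $H$; therefore $\{y,z\}$ is a chord of $H$, contradicting that $H$ is a hole. This contradiction shows that an element of prime-power order can never be a source vertex of an even hole, which is the assertion.

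I do not expect a genuine obstacle here; the proof is essentially the observation that a source of a hole must have two hole-neighbours generating \emph{incomparable} subgroups, which is impossible inside a cyclic group of prime-power order. The only points deserving a careful sentence are (i) the alternation claim, which should be spelled out via the Path Lemma exactly as above, and (ii) the degenerate small case $2k = 4$: there $y$ and $z$ are the ``diagonal'' of the $4$-cycle, so one must note explicitly that they are distinct and non-adjacent in $H$, so that the derived edge $\{y,z\}$ really is a chord. Everything else is routine bookkeeping with subgroup containments.
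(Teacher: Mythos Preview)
Your proposal is correct and follows essentially the same route as the paper's proof: assume a prime-power-order element $x$ is a source in a hole, observe that its two hole-neighbours $y,z$ generate subgroups of the cyclic $p$-group $\langle x\rangle$, use that the subgroup lattice of such a group is a chain to force $y$ and $z$ to be adjacent, and conclude that this adjacency is a chord. Your write-up is in fact slightly more careful than the paper's --- you spell out the source/sink alternation via the Path Lemma, and you avoid the paper's unjustified assertion that the orders of $y$ and $z$ are distinct by arguing directly from comparability of $\langle y\rangle$ and $\langle z\rangle$.
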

\begin{proof}
	Suppose, that is not the case. Then, $\vec{\mathfrak{g}}(G)$ contains a hole with an element $x$ of order $p^n$ as a vertex with out-edges in the hole. Let $y$ and $z$ be  the elements adjacent to $x$.  Then $y$ and $z$ are of orders $p^l$ and $p^m$ respectively, where $l,m<n$.   $l\neq m$. Without loss of generality, suppose $l<m$, then $\langle y \rangle \subset \langle z \rangle$ as each are proper subgroups of $\langle x \rangle$ which is a primary cyclic group. So there is a chord from $y$ to $z$ giving a contradiction.
\end{proof}

\begin{proposition}
Power graphs of groups can contain no anti-holes of length greater than $4$.
\end{proposition}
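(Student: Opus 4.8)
The plan is to show that for every $n\ge 5$ the anti-hole of length $n$ — the complement $\overline{C_n}$ of the $n$-cycle — cannot occur as an induced subgraph of $\mathfrak{g}(G)$; since $\overline{C_4}$ is $2K_2$ and does occur in many power graphs, this is exactly the asserted bound. For odd $n\ge 5$ this is already known: such an anti-hole is an odd anti-hole, and $\mathfrak{g}(G)$ has none because it is perfect (Strong Perfect Graph Theorem; see above). But the argument I have in mind covers all $n\ge 5$ uniformly, so I will not separate the parities.

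Suppose, for contradiction, that $v_1,\dots,v_n$ induce $\overline{C_n}$ in $\mathfrak{g}(G)$, with subscripts read cyclically modulo $n$; thus $\langle v_i\rangle$ and $\langle v_j\rangle$ are incomparable (under $\le$) exactly when $i,j$ are consecutive, and comparable otherwise. Two preliminary facts are needed. First, the subgroups $\langle v_1\rangle,\dots,\langle v_n\rangle$ are pairwise distinct: if $\langle v_i\rangle=\langle v_j\rangle$ then $v_i$ and $v_j$ are comparable to exactly the same vertices and hence have equal closed neighbourhoods in $\overline{C_n}$, and for $n\ge 5$ this forces $i=j$. Second, for each $i$ the set $N_i=\{v_{i+2},v_{i+3},\dots,v_{i-2}\}$ of vertices comparable to $v_i$ is connected for the ``incomparable'' relation: inside the cycle $C_n$ it is the induced path $v_{i+2}\,v_{i+3}\cdots v_{i-2}$ when $n\ge 6$ and the single edge $\{v_{i+2},v_{i-2}\}$ when $n=5$.

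The heart of the argument is a dichotomy: each $v_i$ is either \emph{low}, meaning $\langle v_i\rangle\subsetneq\langle v_j\rangle$ for all $v_j\in N_i$, or \emph{high}, meaning $\langle v_j\rangle\subsetneq\langle v_i\rangle$ for all $v_j\in N_i$. To prove it, suppose $\langle v_i\rangle\subsetneq\langle v_j\rangle$ for some $v_j\in N_i$ and let $v_k\in N_i$ be arbitrary; using the connectedness of $N_i$, pick a walk $v_j=v_{j_0},v_{j_1},\dots,v_{j_r}=v_k$ in $N_i$ along which consecutive terms are incomparable. Now propagate the containment: assuming $\langle v_i\rangle\subsetneq\langle v_{j_t}\rangle$, if we also had $\langle v_{j_{t+1}}\rangle\subseteq\langle v_i\rangle$ then $\langle v_{j_{t+1}}\rangle\subsetneq\langle v_{j_t}\rangle$, contradicting that $v_{j_t},v_{j_{t+1}}$ are incomparable; since $v_{j_{t+1}}\in N_i$ is comparable to $v_i$ and $\langle v_{j_{t+1}}\rangle\ne\langle v_i\rangle$ by distinctness, the only remaining possibility is $\langle v_i\rangle\subsetneq\langle v_{j_{t+1}}\rangle$. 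Hence $\langle v_i\rangle\subsetneq\langle v_k\rangle$, so $v_i$ is low; the symmetric case $\langle v_j\rangle\subsetneq\langle v_i\rangle$ makes $v_i$ high, and distinctness forbids both.

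To finish, observe that two high vertices cannot be comparable — mutual strict containment is impossible — so the high vertices are pairwise incomparable, i.e.\ they form a clique in the incomparability graph $C_n$. As $C_n$ is triangle-free for $n\ge 4$, there are at most two high vertices, and likewise at most two low vertices; since the two classes exhaust all $n$ vertices, $n\le 4$, contradicting $n\ge 5$. I expect the dichotomy to be the only delicate point: it depends on $N_i$ being connected (so the propagation reaches every vertex comparable to $v_i$) and on distinctness of the subgroups (to exclude equality at each step), and one should check the connectedness carefully in the boundary case $n=5$. Everything after the lemma is a two-line count. The same reasoning, incidentally, re-proves that $\mathfrak{g}(G)$ contains no odd anti-hole of length $\ge 5$.
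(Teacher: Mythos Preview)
Your proof is correct, and the core lemma is the same one the paper proves: every vertex of the anti-hole is either a ``source'' or a ``sink'' with respect to the other anti-hole vertices (your \emph{high}/\emph{low} dichotomy is exactly the paper's claim that each vertex has in-degree zero or out-degree zero). Both arguments establish this by propagating the direction of one comparison along the path of incomparable pairs inside $N_i$; you state the connectedness of $N_i$ and the distinctness of the $\langle v_i\rangle$ more explicitly, but the mechanism is identical to the paper's ``repeat the process for a vertex non-adjacent to one of $s_1$ or $s_2$'' step.

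Where the two proofs genuinely diverge is the endgame. The paper, after the dichotomy, invokes perfectness to rule out odd $n\ge 5$, and for even $n\ge 6$ finds two adjacent neighbours $v_1,v_2$ of a fixed vertex $d$ (possible because $n-3\ge 3$) so that the edge between $v_1$ and $v_2$ forces one of them to have both an in-edge and an out-edge, contradicting the dichotomy. Your finish is instead a clique count: high vertices are pairwise incomparable, hence sit inside a clique of $C_n$, so there are at most two of them, and likewise at most two low vertices, giving $n\le 4$. This is cleaner, handles $n=5$ and $n\ge 6$ uniformly, and avoids the appeal to the Strong Perfect Graph Theorem altogether. The paper's triangle argument, on the other hand, is closer in spirit to the directed-path/Path-Lemma machinery developed earlier in the paper.
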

\begin{proof}
Let $G$ be a group and suppose that $\mathfrak{g}(G)$ contains an anti-hole of length $n$ greater than $4$. Claim: All vertices in the anti hole have in-degree zero or out-degree zero in the corresponding directed power graph. Proof of the claim:
First note that, if we consider $n=4$, then the claim follows clearly. Arbitrarily choose a vertex $d$ in the anti hole. There must be a vertex $s_1$ in the antihole adjacent to $d$. Without loss of generality, let the source of the corresponding edge be $s_1$ and the destination vertex $d$. Now choose an edge between a vertex $s_2$, which is non-adjacent to vertex $s_1$, and adjacent to $d$.  Such an edge must exist as we assume $n>4$, since no two vertices can share the same pair of non-adjacent vertices.  The direction of this edge must be from $s_2$ to $d$ since if it were from $d$ to $s_2$ then a directed path would exist between $s_1$ and $s_2$, which by the path lemma would make them adjacent. Repeat the process for a vertex non-adjacent to one of either $s_1$ or $s_2$ and adjacent to $d$ again. The stopping point of this process is when all $n-3$ vertices adjacent to vertex $d$ have been selected. Following this procedure we see that $d$ has out-degree zero. Hence the claim follows.
Now power graph of a group being perfect, here $n>4$ means $n\geq 6$, as it can't have any anti-hole of odd length. So, for any arbitrary vertex $d$ in the antihole, degree of $d=n-3\geq 3$. So, it is possible to choose two vertices $v_1$ and $v_2$ in the neighborhood of $d$ that are adjacent to each other. Without loss of generality, if we assume $d$ has in degree zero, then as there is at least one directed edge between $v_1$ and $v_2$, we get a contradiction to the above claim. Hence the result follows.
  \end{proof}

\section{Completeness}
Here an alternative proof of the well known result regarding completeness of power graphs of cyclic groups of prime-power order in terms of the strong path lemma is presented.
\begin{proposition}
The power graph of a cyclic group of order $p^n$ where $p$ is prime and $n$ is a non-negative integer is complete.
\end{proposition}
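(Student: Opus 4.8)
The plan is to obtain completeness as a consequence of the Strong Path Lemma: I will exhibit a single directed path in $\vec{\mathfrak{g}}(\mathbb{Z}_{p^n})$ that visits all $p^n$ vertices, and then the Strong Path Lemma forces those vertices to form a clique of size $p^n$, which is exactly the assertion that $\mathfrak{g}(\mathbb{Z}_{p^n})$ is complete. (For $n=0$ the group is trivial and there is nothing to prove, so assume $n\ge 1$.)

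First I would record the relevant subgroup structure of $\mathbb{Z}_{p^n}$: every subgroup is cyclic of order $p^k$ for a unique $k$ with $0\le k\le n$, and these subgroups are totally ordered by inclusion, $\{e\}\subset\mathbb{Z}_p\subset\mathbb{Z}_{p^2}\subset\cdots\subset\mathbb{Z}_{p^n}$. In the partition language of Section 3, using the Lemma on the composition length of Abelian groups (so that $\ell(\mathbb{Z}_{p^k})=k$), the block $X_i$ consists precisely of the $\phi(p^i)$ elements of order $p^i$ for $1\le i\le n$, while $X_0=\{e\}$; moreover all elements of a given block $X_i$ generate the same subgroup.

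Now list the elements of $\mathbb{Z}_{p^n}$ as $g_1,g_2,\dots,g_{p^n}$ by writing down the elements of $X_n$ first (in any order), then those of $X_{n-1}$, and so on down to $X_0$. For consecutive entries $g_j,g_{j+1}$: if they lie in the same block they generate the same subgroup, so $\langle g_{j+1}\rangle\le\langle g_j\rangle$ and $(g_j,g_{j+1})$ is a directed edge; if they lie in consecutive blocks $X_i$ and $X_{i'}$ with $i>i'$, then $\langle g_{j+1}\rangle$ is the subgroup of order $p^{i'}$, a proper subgroup of $\langle g_j\rangle$, so again $(g_j,g_{j+1})$ is a directed edge. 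Since the $g_j$ are distinct this gives a directed path $g_1\to g_2\to\cdots\to g_{p^n}$ through every vertex of $\vec{\mathfrak{g}}(\mathbb{Z}_{p^n})$, and the Strong Path Lemma then yields a clique on all $p^n$ vertices, i.e.\ $\mathfrak{g}(\mathbb{Z}_{p^n})$ is complete.

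The only genuinely substantive point is the observation that the subgroup lattice of $\mathbb{Z}_{p^n}$ is a chain; everything after that is bookkeeping, so I expect no real obstacle. One could of course skip the path entirely and note directly that any two cyclic subgroups of $\mathbb{Z}_{p^n}$ are comparable, hence any two elements are adjacent; the detour through the Strong Path Lemma is included only to keep the argument in the spirit of Section 4.
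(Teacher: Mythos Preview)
Your argument is correct and follows essentially the same route as the paper: both construct a directed Hamiltonian path in $\vec{\mathfrak{g}}(\mathbb{Z}_{p^n})$ by running through the generators of the full group, then the generators of the unique maximal proper subgroup, and so on down the chain, and then invoke the Strong Path Lemma. The only cosmetic difference is that the paper phrases this as an induction on $n$ while you unwind the induction and list the blocks $X_n, X_{n-1},\dots,X_0$ directly.
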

\begin{proof}
The proof is by induction on $n$. By the Strong path lemma it suffices to show that there exists a directed path through all vertices in the power graph. When $n=0$ the graph consists of one vertex and the result follows.

Suppose a directed path exists through all vertices in the power graphs of cyclic groups of order $p^k$ for $0\leq k < n$.   Let $G\cong \mathbb{Z}_{p^k}$. Let $x,y$ denote two arbitrary generators of $G$, then in $\vec{\mathfrak{g}}(G)$ there is an edge from $x$ to $y$ and there is an edge from $y$ to $x$, as both elements are members of $G$ and therefore generated by each other. Then, there is a directed path between all generators of $G$.  This path can be extended towards a generator of a subgroup of order $p^{k-1}$, which contains a directed path through all of it's vertices by the induction hypothesis.  As every subgroup of $G$ divides the order of $G$ by Lagrange's theorem, every subgroup of $G$ is properly contained inside the subgroup of order $p^{k-1}$, so there exists a directed path through the entire vertex set of $\mathfrak{g}(G)$.  

By the strong path lemma, there is a clique of size $p^n$ in the cyclic group of order $p^n$, so the graph is complete.
\end{proof}

\section{Chromatic Number of Power Graphs of Cyclic Groups}
\begin{proposition}
    Let $G$ be the cyclic group of order $n$ and let $\mathfrak{g}(G)$ be its power graph.  Let $H$ denote the set of non-generators of $G$. Let $\chi(\Gamma)$ denote the chromatic number of a graph $\Gamma$. Then $\chi(\mathfrak{g}(G))=\phi(n)+\chi(\mathfrak{H})$ where $\mathfrak{H}$ is the subgraph of $G$ consisting of vertices representing non-generators and the edges between them and $\phi$ is Euler's totient function.
    \begin{proof}
        Since elements in $G\setminus H$ generate $G$, for any $g\in G\setminus H$ and $x\in G$, $x\in\langle g \rangle$, so vertex $g$ is adjacent to all elements in $\mathfrak{g}(G)$.  Then no color used in a coloring of the portion of $\mathfrak{g}(G)$ consisting of elements of $G\setminus H$ can be used in a coloring of $\mathfrak{H}$.  Additionally that portion of the graph is internally complete as well so the subgraph of $\mathfrak{g}(G)$ consisting of elements of $G\setminus H$ is isomorphic to $K_{\phi(n)}$ and has chromatic number $\phi(n)$.  Since every color used in the coloring of $\mathfrak{H}$ is distinct from every color in the subgraph of $\mathfrak{g}(G)$ consisting of elements of $G\setminus H$, and $\mathfrak{H}$ has chromatic number $\chi(\mathfrak{H})$, $\chi(\mathfrak{g}(G))$ is at most $\phi(n)+\chi(\mathfrak{H})$.  Also since the colors used in the colorings of the two subgraphs of $\mathfrak{g}(G)$ are distinct, $\chi(\mathfrak{g}(G))$ cannot be less than $\phi(n)+\chi(\mathfrak{H})$, so $\chi(\mathfrak{g}(G))=\phi(n)+\chi(\mathfrak{H})$.
    \end{proof}
\end{proposition}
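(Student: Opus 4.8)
The plan is to exploit the observation that the $\phi(n)$ generators of $G$ are \emph{universal} vertices of $\mathfrak{g}(G)$. Indeed, if $x$ generates $G$ then $\langle y\rangle\le\langle x\rangle=G$ for every $y\in G$, so $x$ is adjacent to every other vertex of $\mathfrak{g}(G)$. In particular any two distinct generators are adjacent, so the generators induce a copy of $K_{\phi(n)}$, and each generator is adjacent to every non-generator. Everything will follow from this together with the standard fact that adjoining $k$ universal vertices to a graph raises its chromatic number by exactly $k$.

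First I would establish the lower bound $\chi(\mathfrak{g}(G))\ge\phi(n)+\chi(\mathfrak{H})$. In any proper coloring of $\mathfrak{g}(G)$, the $\phi(n)$ generators must receive pairwise distinct colors since they form a clique, and none of those colors can occur on a vertex of $\mathfrak{H}$ because every non-generator is adjacent to every generator. Restricting the coloring to the non-generators therefore yields a proper coloring of $\mathfrak{H}$ whose palette is disjoint from the $\phi(n)$ generator colors, hence uses at least $\chi(\mathfrak{H})$ further colors; in total at least $\phi(n)+\chi(\mathfrak{H})$ colors appear.

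For the upper bound I would simply exhibit such a coloring: fix an optimal proper coloring of $\mathfrak{H}$ with $\chi(\mathfrak{H})$ colors, then assign $\phi(n)$ brand-new colors, one to each generator. This is proper — adjacent non-generators differ by the chosen coloring of $\mathfrak{H}$, and a generator differs in color from every other vertex since its color is new and distinct generators get distinct new colors — so $\chi(\mathfrak{g}(G))\le\phi(n)+\chi(\mathfrak{H})$. Combining the two inequalities gives the claimed equality.

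I do not expect a genuine obstacle here; the only things to check are that the generators are universal (immediate from the definition of the power graph and the fact that a generator's cyclic subgroup is all of $G$) and that the bookkeeping handles the degenerate cases, e.g. $n=1$ or $n$ prime, where $\mathfrak{H}$ is empty or a single vertex and one reads $\chi(\mathfrak{H})$ as $0$ or $1$ accordingly, and $\mathfrak{g}(G)=K_{\phi(n)}$ or $K_n$ as it should be.
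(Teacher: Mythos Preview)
Your proposal is correct and follows essentially the same approach as the paper: both arguments hinge on the observation that the $\phi(n)$ generators are universal vertices forming a $K_{\phi(n)}$, so their colors are pairwise distinct and disjoint from any palette used on $\mathfrak{H}$, giving both inequalities. Your write-up is in fact a bit more explicit in separating the lower and upper bounds and in handling the degenerate cases, but the underlying idea is identical.
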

\begin{Corollary}
    Let $G\cong \mathbb{Z}_n$ then $\phi(n)<\chi(\mathfrak{g}(G))\leq n$
\end{Corollary}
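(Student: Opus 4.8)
The plan is to read off both inequalities from the Proposition immediately preceding this corollary, which states $\chi(\mathfrak{g}(G)) = \phi(n) + \chi(\mathfrak{H})$, where $H$ is the set of non-generators of $G\cong\mathbb{Z}_n$ and $\mathfrak{H}$ is the induced subgraph of $\mathfrak{g}(G)$ on $H$.

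For the lower bound, I would note that as long as $n\geq 2$ the identity of $\mathbb{Z}_n$ is not a generator, so $H\neq\varnothing$ and $\mathfrak{H}$ has at least one vertex; hence $\chi(\mathfrak{H})\geq 1$, and therefore $\chi(\mathfrak{g}(G)) = \phi(n)+\chi(\mathfrak{H}) \geq \phi(n)+1 > \phi(n)$. (The only degenerate case is $n=1$, where $\mathbb{Z}_1$ has no non-generators and $\chi(\mathfrak{g}(G))=1=\phi(1)$, so the statement is understood for $n\geq 2$; and when $n$ is prime one gets $\mathfrak{H}=\{e\}$, so $\chi(\mathfrak{g}(G))=\phi(n)+1=n$, showing the lower bound is attained and the upper bound is sharp.)

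For the upper bound, the shortest route is that $\mathfrak{g}(G)$ has exactly $n$ vertices and every graph on $n$ vertices is $n$-colorable, so $\chi(\mathfrak{g}(G))\leq n$. Alternatively, staying within the framework of the Proposition: $\mathfrak{H}$ has $|H| = n-\phi(n)$ vertices, so $\chi(\mathfrak{H})\leq n-\phi(n)$, whence $\chi(\mathfrak{g}(G)) = \phi(n)+\chi(\mathfrak{H}) \leq \phi(n) + \bigl(n-\phi(n)\bigr) = n$.

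There is essentially no obstacle here: both bounds are one-line consequences of the preceding Proposition together with the trivial facts that a nonempty graph has chromatic number at least $1$ and a graph on $k$ vertices has chromatic number at most $k$. The only point deserving a careful sentence is the exclusion of the trivial group so that the left-hand inequality is genuinely strict; one may also optionally remark that equality $\chi(\mathfrak{g}(G))=n$ holds precisely when $\mathfrak{H}$ is complete, i.e.\ when $n$ is a prime power, consistent with the earlier completeness result.
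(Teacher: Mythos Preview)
Your argument is correct and is precisely the intended justification: the paper states this corollary without an explicit proof, treating it as an immediate consequence of the preceding proposition $\chi(\mathfrak{g}(G))=\phi(n)+\chi(\mathfrak{H})$, and your derivation of both inequalities from that formula (using $\chi(\mathfrak{H})\geq 1$ since the identity is a non-generator for $n\geq 2$, and $\chi(\mathfrak{H})\leq |H|=n-\phi(n)$) is exactly what is implicitly assumed. Your caveat about $n=1$ and your remark on when equality holds are reasonable additions not addressed in the paper.
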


The result above gives a lower bound for the chromatic number of a power graph of a cyclic group by identifying a clique of a known size in every cyclic group.  By the strong path lemma, the existence of a directed path of length $k$ implies that the vertices along the path also make up a clique of length $k$.  If $m$ is the length of the longest directed path in a group $G\cong \mathbb{Z}_n$, then the chromatic number of the cyclic group of order $n$ is at least as large as the length of that path.
\begin{proposition}
Let $G$ be a group and let $m$ be the length of a directed path in $\vec{\mathfrak{g}}(G)$.  Then $m\leq \chi(\mathfrak{g}(G)) \leq n$.
\end{proposition}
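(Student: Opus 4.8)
The plan is to obtain the two inequalities independently, each as an immediate consequence of a standard fact about chromatic numbers together with one result already established in the paper.

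First I would handle the lower bound $m \le \chi(\mathfrak{g}(G))$. By the Strong Path Lemma, a directed path of length $m$ in $\vec{\mathfrak{g}}(G)$ has its $m$ constituent vertices forming a clique of size $m$ in $\mathfrak{g}(G)$; in particular $\omega(\mathfrak{g}(G)) \ge m$. In any proper vertex coloring the vertices of a clique must receive pairwise distinct colors, so every coloring of $\mathfrak{g}(G)$ uses at least $\omega(\mathfrak{g}(G))$ colors. Hence $\chi(\mathfrak{g}(G)) \ge \omega(\mathfrak{g}(G)) \ge m$.

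Next I would dispatch the upper bound $\chi(\mathfrak{g}(G)) \le n$, where $n = \lvert G\rvert$. The vertex set of $\mathfrak{g}(G)$ is exactly $G$, which has $n$ elements, so the coloring that assigns a distinct color to every vertex is a proper coloring with $n$ colors; therefore $\chi(\mathfrak{g}(G)) \le n$. (One could instead cite perfectness of $\mathfrak{g}(G)$ to get $\chi(\mathfrak{g}(G)) = \omega(\mathfrak{g}(G)) \le n$, but this is not needed.) Concatenating the two inequalities gives $m \le \chi(\mathfrak{g}(G)) \le n$.

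I do not anticipate a genuine obstacle; both bounds are soft. The only points requiring care are (i) using the same convention for the \emph{length} of a directed path as in the Strong Path Lemma, namely the number of vertices it visits, so that ``length $m$'' really does produce a clique on $m$ vertices, and (ii) recording explicitly that $n$ denotes the order of $G$, so that the trivial $n$-coloring argument applies verbatim.
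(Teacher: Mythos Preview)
Your proposal is correct and follows essentially the same approach as the paper. The paper does not give a formal proof of this proposition; the justification is the sentence immediately preceding it, which invokes the Strong Path Lemma to obtain a clique of size $m$ and hence a lower bound on $\chi(\mathfrak{g}(G))$, exactly as you do, while the upper bound $\chi(\mathfrak{g}(G))\le n$ is taken as obvious (your trivial $n$-coloring argument makes this explicit).
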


In a cyclic group $G$ of order $n$, a path $m$ of length longer than $\phi(n)$ can be constructed as follows.  Follow the path of length $\phi(n)$ through the generators of of $G$.  After visiting the last generator, follow the path to a generator of a proper subgroup of $G$, which must exist as a path exists from a generator of $G$ to every element of $G$. As any two generators of any group have edges from each vertex to the other,  each of the generators of this subgroup can be added to the path.  This process can than be continued for a subgroup of this subgroup and so on until the the identity element is added to the path.  In fact, the longest path through any cyclic subgroup will be of this form, a descending chain of generators of proper subgroups.

\begin{Theorem}
Let $\mathbb{Z} / n\mathbb{Z}$ be the cyclic group of order $n$, and let $S_k$ be the set of generators of $\mathbb{Z} / k\mathbb{Z}$.  The longest path through $\mathbb{Z} / n\mathbb{Z}$ will be of the form \\$s_{n1},s_{n2},...s_{n\phi(n)}, d\cdot s_{\frac{n}{d}1}, d\cdot s_{\frac{n}{d}2}, d\cdot s_{\frac{n}{d}\phi(\frac{n}{d})}, ...(dd_1...d_j)\cdot s_{\frac{n}{dd_1d_2...d_j}}\phi(\frac{n}{dd_1d_2...d_j})$ where $d, d_1, d_2...d_j$ are prime divisors of $n$ with $d\leq d_1 \leq d_2 \leq ... \leq d_j$. and $s_{ix}\in S_i$.
\end{Theorem}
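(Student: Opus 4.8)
The plan is to show that any longest directed path $P$ in $\vec{\mathfrak{g}}(\mathbb{Z}/n\mathbb{Z})$ must consist of a descending chain of full generator-sets of a chain of subgroups, each step passing to a maximal (prime-index) subgroup of the previous one. First I would invoke the paragraph preceding the theorem: a path of length $\phi(n)$ through the generators of $\mathbb{Z}/n\mathbb{Z}$ can always be extended downward, so the longest path has length $\Psi(n) > \phi(n)$ and in particular uses every generator of $\mathbb{Z}/n\mathbb{Z}$. The key structural fact I would establish is that if a longest path visits two elements $a, b$ with $\langle a \rangle = \langle b \rangle$, then it visits \emph{every} element of that common cyclic subgroup consecutively: since all generators of a cyclic subgroup are mutually joined by edges in both directions in $\vec{\mathfrak{g}}(G)$ (as each is a power of the other), any generator not yet on the path could be spliced in next to $a$, contradicting maximality. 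This "block" structure forces the path to be a concatenation of complete generator-sets $S_{d_1}, S_{d_2}, \ldots$ (scaled appropriately inside $\mathbb{Z}/n\mathbb{Z}$) of a descending chain of cyclic subgroups $\mathbb{Z}/n\mathbb{Z} \supset H_1 \supset H_2 \supset \cdots$.

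Next I would argue that consecutive subgroups in this chain differ by a \emph{prime} index. Suppose $H_{j} \supset H_{j+1}$ with $[H_j : H_{j+1}] = d$ composite, say $d = p \cdot e$ with $p$ prime and $e > 1$. Then there is an intermediate cyclic subgroup $H_j \supset K \supset H_{j+1}$, and one checks that $|S_{|H_j|}| + |S_{|H_{j+1}|}| < |S_{|H_j|}| + |S_{|K|}| + |S_{|H_{j+1}|}|$ — i.e. inserting the full generator-block of $K$ between the block of $H_j$ and the block of $H_{j+1}$ strictly lengthens the path while remaining a valid directed path (each generator of $H_j$ has an out-edge to each generator of $K$, and each generator of $K$ has an out-edge to each generator of $H_{j+1}$, since containment of cyclic subgroups gives the required power relation). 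This contradicts maximality, so every step is of prime index; writing $|H_j| = n/(d_1 d_2 \cdots d_j)$ with each $d_i$ prime yields exactly the claimed form, and the chain must terminate at the trivial subgroup $\{e\}$ since otherwise the block of $\{e\}$ (the identity) could still be appended.

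Finally I would pin down the ordering condition $d \le d_1 \le \cdots \le d_j$. This is an optimization step: among all maximal descending chains of cyclic subgroups of $\mathbb{Z}/n\mathbb{Z}$ from the whole group to $\{e\}$, the total weight $\sum_j \phi(|H_j|)$ is maximized when at each stage we divide out the \emph{smallest} available prime factor first. The reason is that $\phi$ is multiplicative and, for a fixed multiset of primes to be removed, removing a smaller prime early keeps the running subgroup orders larger for longer, and $\phi$ of a larger order contributes more; a clean way to see it is an exchange argument — if at some step we remove $d_{i+1}$ before an available smaller prime $d_i < d_{i+1}$, swapping the order of these two divisions weakly increases every intermediate $\phi$-value and strictly increases it at one stage. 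I expect this last monotonicity/exchange argument to be the main obstacle: the block-structure and prime-index steps are essentially forced by splicing arguments, but verifying that "smallest prime first" is genuinely optimal requires a careful comparison of the two relevant products of $\phi$-values (equivalently, of $\Psi(n)$ written as a sum over the chain), and one must be slightly careful when $n$ has repeated prime factors so that $d \le d_1 \le \cdots$ allows equalities. Once that inequality is in hand, the theorem follows by combining it with the forced form from the previous two paragraphs.
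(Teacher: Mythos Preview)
Your proposal is correct and follows essentially the same approach as the paper: establish that a longest directed path must run through the full generator-sets of a descending chain of cyclic subgroups (via a splicing/maximality argument), then optimize over such chains to conclude that dividing out the smallest available prime at each stage is best. The paper merges your ``prime-index step'' and ``ordering step'' into a single ``always pass to the largest proper subgroup'' argument and verifies the totient comparison directly via inequalities like $\phi(p^{k})\le \phi(p^{k-1})(q-1)$ for $p<q$, whereas you phrase it as an adjacent-swap exchange argument; these are the same computation underneath. One small caution: your claim that the swap ``strictly increases it at one stage'' is not always strict (e.g.\ when $p=2$, $q=3$ and $2\mid k$ but $3\nmid k$ one gets equality), but weak increase is all that is needed since the theorem asserts only that a longest path of the stated form exists.
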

\begin{proof}
Suppose a path longer than the one given exists. Such a path necessarily begins with the set of elements of order $n$, that is the elements of $S_n$, since if it did not these elements could simply be added to the beginning of the path to make a new longer path.  After every element of order $n$ is added to the path an element $g_1$ from a proper subgroup of $\mathbb{Z} / n\mathbb{Z}$ can be added to the path, but this element will have order $\frac{n}{d}$ where $d$ is a divisor of $n$.  In fact this element will generate a subgroup $\langle g_1 \rangle$ of order $\frac{n}{d}$ which is isomorphic to $\mathbb{Z} / \frac{n}{d}\mathbb{Z}$ with the mapping given by $z\in \mathbb{Z} / \frac{n}{d}\mathbb{Z} \mapsto d\cdot g\in \langle g_1 \rangle$.  Then for each of the $\phi(\frac{n}{d})$ generators $z\in z\in \mathbb{Z} / \frac{n}{d}\mathbb{Z}$, the corresponding element $d\cdot z \in \langle g_1 \rangle$must be added to the path, since if it were not added to the path a new path could be constructed with these elements following (or preceding) $g_1$ which is longer. The same process can be repeated from $\langle g_1 \rangle$, add an element from a subgroup of $\langle g_1 \rangle$ and all of the other generators of the same subgroup.  In this way generators of a descending chain of subgroups are added to the path, terminating with the generator of the trivial group, the identity element.  

Since the longest path must be in the form of generators of a chain of subgroups, it remains to be shown that by always choosing the largest possible proper subgroup whose generators to add to the path, the path size is maximized.  That is, by traversing the subgroups of $\mathbb{Z} / n\mathbb{Z}$ in the order \\$\mathbb{Z} / n\mathbb{Z}\to d\mathbb{Z} / n\mathbb{Z}\to d\cdot d_1\mathbb{Z} / n\mathbb{Z}\to...\to (d\cdot d_1\cdot ... \cdot d_j)\mathbb{Z} / n\mathbb{Z}$ where $d\leq d_1 \leq ... \leq d_j$, the number of elements added to the path is as large as possible.  To see this, some properties of Euler's totient function are examined.  First observe that increasing any single prime factor in a number will increase the totient of that number, that is if $n=p_{11}p_{12}...p_{1k}$ where $p_1 \leq p_2 \leq... \leq p_k$ and $m=p_{21}p_{22}...p_{2k}$ where $p_{1x}=p_{2x}$ for all $x$ except one, and at that one index $p_{2x}>p_{1x}$, then $\phi(m)\geq \phi(n)$.  It is known that the totient function is multiplicative over relatively prime arguments, that is $\phi(ab)=\phi(a)\phi(b)$ if $\gcd(a,b)=1$, so we can write $\phi(n)$ and $\phi(m)$ as $\Pi_{x\in X} \phi(p_{1x}^{k_x})$ and $\Pi_{y\in Y} \phi(p_{2y}^{k_y})$ respectively where $X$ is the set of prime factors of $n$ and $Y$ is the set of prime factors of $m$.  Then both $\phi(n)$ and $\phi(m)$ can be divided by the prime factors and multiplicities for which they agree, leaving only the prime factors which differ between $n$ and $m$.  Then observe that $\phi(p^k) < \phi(q^k)$ if $p < q$ are distinct primes.  Also $\phi(p^{k})\leq \phi(p^{k-1})q$ if $p< q$ since $p^k-p^{k-1} \leq (p^{k-1}-p^{k-2})(q-1)$.  Then it is clear that if $n$ is the order of a cyclic group the subgroups must be traversed in the order above to make the number of elements in the path as large as possible.
\end{proof}
Here the largest path through $\vec{\mathfrak{g}}(\mathbb{Z} / n\mathbb{Z})$ has been constructed, which will have a length equal to the size of the largest clique in $\mathfrak{g}(\mathbb{Z} / n\mathbb{Z})$, by the path-clique equivalence theorem.  Since power graphs are perfect, the size of the largest clique in a power graph is also equal to its chromatic number.  Then the following result is true for power graphs of cyclic groups

\begin{Corollary}
Let $G\cong \mathbb{Z} / n\mathbb{Z}$ be a group with power graph $\mathfrak{g}(G)$.  Also let $d_1\leq d_2\leq... \leq d_k$ be (not necessarily distinct) prime divisors of $n$ then $\chi(\mathfrak{g}(G))=\phi(n)+\phi(\frac{n}{d_1})+\phi(\frac{n}{d_1d_2})+...+\phi(\frac{n}{d_1d_2...d_k})$
\end{Corollary}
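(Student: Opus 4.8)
The plan is to chain together facts already in hand. Since power graphs of finite groups are perfect, $\chi(\mathfrak{g}(G))=\omega(\mathfrak{g}(G))$. By the Path--Clique Equivalence Theorem (Theorem~\ref{theo 3}), every clique in $\mathfrak{g}(G)$ is traversable by a directed path in $\vec{\mathfrak{g}}(G)$, and the vertices of every directed path in $\vec{\mathfrak{g}}(G)$ form a clique in $\mathfrak{g}(G)$; hence $\omega(\mathfrak{g}(G))$ equals the number of vertices on a longest directed path in $\vec{\mathfrak{g}}(G)$. Finally, the preceding Theorem describes such a longest path explicitly. So the whole task reduces to counting the vertices on that path.

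I would first pin down the indexing: the list $d_1\le d_2\le\cdots\le d_k$ should be read as the full multiset of prime factors of $n$, so that $n=d_1d_2\cdots d_k$ and $k=\ell(\mathbb{Z}/n\mathbb{Z})=\Omega(n)$ (by the Lemma equating composition length with the sum of prime exponents); otherwise the right-hand side of the claimed identity is not well defined. With this convention, the longest directed path furnished by the Theorem is partitioned into consecutive blocks: the $\phi(n)$ generators of $\mathbb{Z}/n\mathbb{Z}$, then the $\phi(n/d_1)$ elements $d_1\cdot s$ with $s\in S_{n/d_1}$, then the $\phi(n/(d_1d_2))$ elements $(d_1d_2)\cdot s$ with $s\in S_{n/(d_1d_2)}$, and so on, the final block being the single element $(d_1\cdots d_k)\cdot 0=0$. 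Distinct blocks consist of generators of cyclic subgroups of distinct orders $n/(d_1\cdots d_i)$, so the blocks are pairwise disjoint, and each contributes exactly $\phi(n/(d_1\cdots d_i))$ vertices. Summing,
\[
\chi(\mathfrak{g}(G))=\omega(\mathfrak{g}(G))=\phi(n)+\phi\bigl(\tfrac{n}{d_1}\bigr)+\phi\bigl(\tfrac{n}{d_1d_2}\bigr)+\cdots+\phi\bigl(\tfrac{n}{d_1d_2\cdots d_k}\bigr),
\]
which is the assertion.

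The genuine content of the result lies in the Theorem that precedes it --- specifically in the argument, via the totient estimates $\phi(p^m)<\phi(q^m)$ and $\phi(p^m)\le\phi(p^{m-1})q$ for primes $p<q$, that collecting the generators of the largest available proper subgroup at each step (equivalently, dividing out the smallest available prime) actually maximizes the vertex count. Given that, this corollary is pure bookkeeping, and I foresee no real obstacle beyond two points of care: confirming that the descending chain of subgroups terminates after exactly $k=\Omega(n)$ steps at $\{0\}$ (so the displayed sum has $k+1$ terms, ending in $\phi(1)=1$), and keeping the word ``length'' in the vertex-counting sense used in the Strong Path Lemma, so that path length, maximum clique size, and chromatic number all name the same integer.
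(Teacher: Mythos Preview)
Your proposal is correct and follows essentially the same route as the paper: the paper states (in the paragraph immediately preceding the Corollary) that the constructed longest path has length equal to $\omega(\mathfrak{g}(G))$ by path--clique equivalence, and that perfection of power graphs then gives $\chi(\mathfrak{g}(G))=\omega(\mathfrak{g}(G))$, so the formula follows by reading off the block sizes. Your write-up is in fact more careful than the paper's, since you make explicit that $d_1\cdots d_k=n$ (so the chain terminates at the trivial subgroup with $\phi(1)=1$) and that the blocks are disjoint because they generate subgroups of distinct orders.
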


Here the proof of Theorem \ref{theo 3} did not depend on the fact that $G$ was a cyclic group in any way, except that the path constructed through the elements of $G$ could always be started with an element of order $n$, where $n$ is the order of $G$.  In a general group, there are no elements with order equal to the order of the group, so there are many possibilities for where to begin the longest path.  Let $g_1, g_2,...g_n$ be elements of a group $G$, and define $\Psi(n)=\phi(n)+\phi(\frac{n}{d_1})+\phi(\frac{n}{d_1d_2})+...+\phi(\frac{n}{d_1d_2...d_k})$, where $d_1, d_2, ...d_k$ are prime divisors of $n$, then $\chi(G)=\max_{i}\{\Psi(g_i)\}$

\section{Chordallity of Power Graph of cyclic groups}

		\begin{proposition}
			Consider $n\neq p^{m}$ for some prime $p$ and a positive integer $m$. If a vertex $k \in \mathfrak g(\mathbb Z_{n})$ is simplicial then $gcd(k,n)\neq 1$.
			\begin{proof}
				Let $k \in \mathfrak g(\mathbb Z_{n})$ be simplicial with $gcd(k,n)= 1$. Then $k$ generates $\mathbb Z_{n}$. So, $k$ being adjacent to every $x\in \mathfrak g(\mathbb Z_{n})$, $\mathfrak g(\mathbb Z_{n})$ turns to be complete, which is a contradiction as $n\neq p^m$.
			\end{proof}
		\end{proposition}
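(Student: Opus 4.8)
The plan is to prove the contrapositive: assuming $n \neq p^m$, if $\gcd(k,n) = 1$ then $k$ is \emph{not} simplicial in $\mathfrak{g}(\mathbb{Z}_n)$. So I would start by supposing, toward a contradiction, that $\gcd(k,n) = 1$ and that $k$ is simplicial.

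The first step is to note that $\gcd(k,n) = 1$ forces $k$ to be a generator of $\mathbb{Z}_n$, so $\langle k \rangle = \mathbb{Z}_n$. Hence $\langle x \rangle \leq \langle k \rangle$ for every $x \in \mathbb{Z}_n$, which means $k$ is adjacent to every other vertex of $\mathfrak{g}(\mathbb{Z}_n)$; equivalently, the neighborhood of $k$ is all of $V(\mathfrak{g}(\mathbb{Z}_n)) \setminus \{k\}$. The second step invokes the definition of simplicial: the subgraph induced on the neighborhood of $k$ — which is precisely $\mathfrak{g}(\mathbb{Z}_n)$ with the vertex $k$ deleted — must be complete. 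Since $k$ is itself adjacent to all of these vertices, it follows that $\mathfrak{g}(\mathbb{Z}_n)$ is complete.

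The final step is to extract the contradiction from $n \neq p^m$. Since $n$ is not a prime power, the earlier computation of the center of $\mathfrak{g}(\mathbb{Z}_n)$ (case (1) of the Proposition in Section 2) shows that only $\phi(n)+1 < n$ vertices have degree $n-1$, so $\mathfrak{g}(\mathbb{Z}_n)$ cannot be complete. This contradicts the conclusion of the previous step, so $k$ is not simplicial, which is what we wanted.

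I do not expect a genuine obstacle here, as the argument is short. The only point requiring a little care is the appeal to non-completeness of $\mathfrak{g}(\mathbb{Z}_n)$ when $n$ is not a prime power; this is already available from the center-cardinality result proved earlier in the paper, so it can simply be cited.
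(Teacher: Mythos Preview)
Your proof is correct and follows essentially the same route as the paper: assume $\gcd(k,n)=1$, deduce that $k$ is a generator and hence adjacent to every other vertex, use simpliciality to conclude $\mathfrak{g}(\mathbb{Z}_n)$ is complete, and derive a contradiction from $n\neq p^m$. The only cosmetic difference is that the paper appeals directly to the known characterization of completeness of $\mathfrak{g}(\mathbb{Z}_n)$, whereas you cite the center-cardinality computation from Section~2; either reference suffices.
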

		
		Converse of the above result is not true in general. For example: Consider $Z_{12}, gcd(6,12)\neq 1$. Though, $6$ is not a simplicial vertex in $\mathfrak g(\mathbb Z_{12})$ as because $2,3$ are adjacent to $6$, but they are not adjacent to each other.

		\begin{proposition}\label{prop 27}
			$\mathfrak g(\mathbb Z_{n})$ is chordal if and only if $n=p^{m}$ for some prime $p$ and positive integer $m$ or $n=p^{m}q$, for two distinct primes $p,q$ and positive integer $m$.
		\end{proposition}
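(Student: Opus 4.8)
The plan is to reduce everything to the presence of holes. Since $\mathfrak{g}(\mathbb{Z}_n)$ is perfect and in fact contains no odd holes, every hole in it has even length $4,6,8,\dots$, and chordality is equivalent to the absence of any hole of length $\ge 4$. By the earlier proposition matching holes of $\mathfrak{g}(G)$ with holes of $C(G)$, I would argue inside $C(\mathbb{Z}_n)$, whose vertices are the divisors of $n$ with two divisors adjacent precisely when one divides the other. So the task becomes: decide for which $n$ the divisor-comparability graph contains (i) a hole of length $4$, or (ii) a hole of length $\ge 6$.

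For the implication that $n=p^m$ or $n=p^mq$ forces chordality: if $n=p^m$ the power graph is complete by the Completeness section, hence chordal. Assume $n=p^mq$. Having only two distinct prime divisors, the proposition that a hole of length $k$ forces at least $k/2$ distinct prime factors of $|G|$ immediately rules out holes of length $\ge 6$. To exclude a $4$-hole, use the structural observation that in any hole no vertex can have one cycle-neighbour properly dividing it and another properly dividing into it, since that would produce a chord; hence along a $4$-hole the two roles alternate and we may assume the vertices are divisors $e_1,e_2,e_3,e_4$ with $e_1,e_3$ each a proper divisor of both $e_2$ and $e_4$, where $\{e_1,e_3\}$ and $\{e_2,e_4\}$ are each incomparable. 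Now an incomparable pair of divisors of $p^mq$ must have the shape $\{p^a,p^bq\}$ with $a>b$; applying this to $\{e_1,e_3\}$, both $e_2$ and $e_4$ are multiples of $\operatorname{lcm}(p^a,p^bq)=p^aq$, so they lie in the chain $\{p^aq,p^{a+1}q,\dots,p^mq\}$ and are therefore comparable, contradicting $\{e_2,e_4\}$ being incomparable. Hence $\mathfrak{g}(\mathbb{Z}_{p^mq})$ has no $4$-hole and is chordal.

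For the reverse implication, suppose $n$ is neither a prime power nor of the form $p^mq$. If $n$ has at least three distinct prime divisors, the explicit construction in the earlier proposition gives a hole of length $6$, so $\mathfrak{g}(\mathbb{Z}_n)$ is not chordal. Otherwise $n=p^aq^b$ with exactly two distinct primes, and since $n\ne p^mq$ we must have $a\ge 2$ and $b\ge 2$; then $n$ has two prime factors of multiplicity at least two, and the earlier proposition yields a hole of length $4$ (explicitly, on the subgroups of orders $p,\ p^2q,\ q,\ pq^2$), so again $\mathfrak{g}(\mathbb{Z}_n)$ is not chordal. Combining the two implications gives the stated equivalence.

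The only step I expect to require genuine care is the exclusion of a $4$-hole for $n=p^mq$: making the alternation argument precise so that the two low vertices of a $4$-hole are really comparable to both high vertices, and then reading off the exact shape of incomparable divisor pairs of $p^mq$. Everything else is either elementary bookkeeping about even holes or a direct citation of the hole-construction and hole-obstruction propositions proved earlier.
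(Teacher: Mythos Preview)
Your argument is correct and in several places more complete than the paper's. For the forward direction with $n=p^mq$, the paper simply asserts that the subgroup diagram of $\mathbb{Z}_{p^mq}$ is chordal and that this transfers to $C(\mathbb{Z}_n)$ (and thence to $\mathfrak{g}(\mathbb{Z}_n)$), without elaboration; your approach instead invokes the earlier bound that a hole of length $k$ forces at least $k/2$ distinct prime divisors to kill all holes of length $\ge 6$, and then gives a clean direct argument ruling out $4$-holes via the alternation of low/high vertices and the observation that any incomparable pair of divisors of $p^mq$ has the shape $\{p^a,\,p^bq\}$ with $a>b$, forcing the two high vertices into the chain above $p^aq$. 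This is a genuinely different and more self-contained route. For the converse, the paper's proof handles only the case where $n$ has two primes each of multiplicity $\ge 2$, producing the $4$-hole $p\text{--}pq^2\text{--}q\text{--}p^2q\text{--}p$; it does not explicitly treat $n$ with three or more distinct prime factors (e.g.\ $n=pqr$), whereas you correctly split into two subcases and invoke the $6$-hole construction for the three-prime case. Your proof therefore covers a gap in the paper's argument while relying on the same stock of earlier results.
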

		\begin{proof}
			If $n=p^{m}$, then $\mathfrak g(\mathbb Z_{n})$ being complete is chordal. If $n=p^{m}q$, then subgroup diagram of the group $\mathbb Z_{n}$ is chordal. And hence, $C(Z_n)$ is also so, as the subgroup diagram is a subgraph of   $C(Z_n)$. Thus, $\mathfrak g(\mathbb Z_{n})$ is chordal.
			Conversely, if $n\neq p^{m}, p^{m}q$, then $n$ has at least two distinct prime factors $p,q$ with powers $m,n$ where both $m,n$ are positive integers bigger than or equal to $2$. In that case, $p-pq^{2}-q-p^{2}q-p$ is a chordless cycle in $\mathfrak g(\mathbb Z_{n})$  giving the graph as  non-chordal.
		\end{proof}

		\begin{proposition}
			A vertex in $C(Z_{n})$ other than $[n]_\sim$ and $[1]_\sim$ is simplicial iff it has only one parent and one child.
			
			\begin{proof}
				Let $x$ be a vertex in $C(Z_{n})$ other than $[n]_\sim$ and $[1]_\sim$. If it has more than one parent, then any two parents are not adjacent. Similarly, if it has more than one child then any two children are not adjacent to each other.\\
				Conversely let $x$ be a simplicial vertex. Then, it can neither have more one parent, nor more than one child.
			\end{proof}
		\end{proposition}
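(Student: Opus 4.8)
The plan is to work with the concrete description of $C(\mathbb{Z}_n)$ as the comparability graph of the divisor lattice of $n$. Since every subgroup of $\mathbb{Z}_n$ is cyclic and there is exactly one subgroup of each order $m\mid n$, the vertices of $C(\mathbb{Z}_n)$ correspond bijectively to the divisors of $n$, and two vertices are adjacent precisely when one of the corresponding divisors divides the other. Under this identification I read a \emph{parent} of the vertex labelled $m$ as a divisor of the form $mp$ with $p$ prime and $mp\mid n$ (i.e. a cover of $m$ from above in the subgroup lattice), and a \emph{child} as a divisor $m/p$ with $p$ a prime dividing $m$. The vertex $[x]_\sim$ in the statement corresponds to a divisor $m$ with $1<m<n$ (that is exactly why $[n]_\sim$ and $[1]_\sim$, the bottom and top of the lattice, are excluded), so it automatically has at least one parent and at least one child. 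Note also that the number of parents of $m$ equals the number of distinct prime divisors of $n/m$, and the number of children equals the number of distinct prime divisors of $m$.

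For the forward direction, suppose $[x]_\sim$ (labelled $m$) is simplicial. If it had two distinct parents $mp_1$ and $mp_2$ with $p_1\neq p_2$ prime, both would be neighbours of $m$, but $mp_1\nmid mp_2$ and $mp_2\nmid mp_1$, so they would be non-adjacent, contradicting that the neighbourhood of $m$ is a clique; here we use crucially that $\mathbb{Z}_n$ has a unique subgroup of each order, which forces two distinct parents to be incomparable. Hence $[x]_\sim$ has at most one parent, and the identical argument applied to two children $m/p_1,\,m/p_2$ shows it has at most one child. Combined with the fact that it has at least one of each, it has exactly one parent and exactly one child.

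For the converse, suppose $[x]_\sim$ (labelled $m$) has exactly one parent and exactly one child. By the count above this says $n/m$ has a single prime divisor, so $n/m=q^{\ell}$ with $\ell\geq 1$, and $m$ has a single prime divisor, so $m=p^{k}$ with $k\geq 1$. The neighbours of $m$ in $C(\mathbb{Z}_n)$ are exactly the divisors of $n$ comparable to $m$ and distinct from it: the proper divisors of $m$, namely $1,p,\dots,p^{k-1}$, and the proper multiples of $m$ dividing $n$, namely $mq,mq^{2},\dots,mq^{\ell}$. Each element of the first list divides $m$ and hence divides every element of the second list, and each of the two lists is itself totally ordered by divisibility; therefore the whole neighbourhood is totally ordered by divisibility and so induces a clique in $C(\mathbb{Z}_n)$. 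Thus $[x]_\sim$ is simplicial.

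I expect the only genuine subtlety to be pinning down ``parent'' and ``child'' as the covering relations of the subgroup lattice rather than as arbitrary strictly larger/smaller cyclic subgroups (with the latter reading the statement would fail, e.g. in $\mathbb{Z}_{p^3}$); once that is fixed, both implications are essentially bookkeeping about prime divisors plus transitivity of divisibility, and no serious obstacle remains.
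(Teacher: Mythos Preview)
Your argument is correct, and the forward direction (simplicial $\Rightarrow$ exactly one parent and one child) is essentially identical to the paper's: two distinct parents (or two distinct children) of $m$ in the divisor lattice are incomparable, hence non-adjacent, so the closed neighbourhood fails to be a clique.

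Where your treatment genuinely differs is in the converse. The paper's ``conversely'' paragraph merely restates the forward implication (``if $x$ is simplicial then it cannot have more than one parent or child'') and never actually argues that a vertex with a unique parent and a unique child must be simplicial. You supply this missing direction explicitly: a single child forces $m=p^{k}$, a single parent forces $n/m=q^{\ell}$, and then the neighbours of $m$ are exactly $1,p,\dots,p^{k-1},\,p^{k}q,\dots,p^{k}q^{\ell}$, which form a chain under divisibility and hence a clique. This is the substantive content that the proposition needs, and your observation that the number of parents (resp.\ children) equals the number of prime divisors of $n/m$ (resp.\ $m$) is exactly the right bridge. So your proof is not just an alternative route---it completes an argument the paper leaves unfinished.

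Your closing caveat about reading ``parent'' and ``child'' as covering relations is also well placed; the paper uses these terms without definition, and the interpretation as arbitrary larger/smaller subgroups would indeed break the statement.
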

		Note that, even if $\mathfrak g(\mathbb Z_{n})$ is not chordal for $n= p^{m}q^{r}$ where $m,r \geq 2$ and $p,q$ are distinct primes, they have simplicial vertices namely $p^{m}, q^{r}$, as because they are simplicial in the corresponding $C(Z_n)$.
		
		Thus, we can now state the following result:
		\begin{Theorem}
		If $n = {\prod_{i=1}^{k} p_{i}^{\alpha_{i}}}$,  then whenever $k\geq 3, \mathfrak g(\mathbb Z_{n})$ does not have any simplicial vertex.
		\end{Theorem}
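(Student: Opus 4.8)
The plan is to recast simpliciality in $\mathfrak g(\mathbb Z_n)$ as a statement about the divisor lattice of $n$ and then show that the chain condition it forces cannot survive a third prime factor. Recall that in $\mathbb Z_n$ the cyclic subgroup $\langle x\rangle$ depends only on $d_x:=\gcd(x,n)$, with $\langle x\rangle=\langle d_x\rangle$, and that $\langle x\rangle\le\langle y\rangle$ holds exactly when $d_y\mid d_x$. Hence two vertices of $\mathfrak g(\mathbb Z_n)$ are adjacent iff $d_x$ and $d_y$ are comparable under divisibility, and two distinct vertices sharing the same associated divisor are always adjacent. The first step is the characterization: the vertex $x$ is simplicial iff the set $D(d_x)=\{e:e\mid n\text{ and }(e\mid d_x\text{ or }d_x\mid e)\}$ of divisors of $n$ comparable to $d_x$ is a chain under divisibility. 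Indeed, if $e_1,e_2\in D(d_x)$ are incomparable then (note $e_1,e_2\ne d_x$) the group elements $e_1,e_2\in\mathbb Z_n$ are distinct, non-adjacent, and both adjacent to $x$; conversely, if $D(d_x)$ is a chain then the divisors of all neighbours of $x$ lie in $D(d_x)$, hence are pairwise comparable or equal, so those neighbours are pairwise adjacent.

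The second step is to determine when $D(d)$ is a chain for a divisor $d\mid n$. Writing $D(d)=\mathrm{Div}(d)\cup d\cdot\mathrm{Div}(n/d)$, every divisor of $d$ divides every element of $d\cdot\mathrm{Div}(n/d)$, so comparability across the two pieces is automatic; thus $D(d)$ is a chain iff both $\mathrm{Div}(d)$ and $\mathrm{Div}(n/d)$ are chains, i.e. iff each of $d$ and $n/d$ is $1$ or a prime power. Since every prime dividing $n$ divides $d$ or divides $n/d$, this would force $n$ to have at most two distinct prime factors, contradicting $k\ge 3$; hence no divisor of $n$ can arise as $\gcd(x,n)$ for a simplicial vertex $x$, which is the theorem.

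To keep the argument self-contained I would exhibit the incomparable pair explicitly, which also isolates where $k\ge 3$ is used. If $d_x$ has two distinct prime divisors $p_i,p_j$ (this covers $d_x=n$ whenever $k\ge 2$), then $d_x/p_i$ and $d_x/p_j$ are divisors of $n$, each comparable to $d_x$ but mutually incomparable. If instead $d_x=p_i^{a}$ is a prime power (including $d_x=1$), then $k\ge 3$ lets me pick two further primes $p_j\ne p_l$ dividing $n$, and $d_x p_j$, $d_x p_l$ are divisors of $n$, both multiples of $d_x$ yet mutually incomparable. This prime-power case is the only real obstacle: one cannot descend from $d_x$ via $d_x/p$, so the incomparable neighbours must lie \emph{above} $d_x$, and producing two of them needs two prime factors of $n$ disjoint from $d_x$ — exactly what $k\ge 3$ provides and what fails for $k=2$, in agreement with the preceding remark that for $n=p^{m}q^{r}$ the vertices with $d_x=p^{m}$ or $d_x=q^{r}$ remain simplicial.
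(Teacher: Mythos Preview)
Your argument is correct. At its core it coincides with the paper's: both reduce simpliciality to a condition on the divisor lattice of $n$, and both ultimately show that a simplicial vertex would force $n$ to have at most two distinct prime factors. The paper, however, routes this through the cyclic subgroup graph $C(\mathbb Z_n)$: it first invokes the earlier proposition that $x$ is simplicial in $\mathfrak g(\mathbb Z_n)$ iff $[x]_\sim$ is simplicial in $C(\mathbb Z_n)$, then the characterization that a non-extremal vertex of $C(\mathbb Z_n)$ is simplicial iff it has exactly one parent and one child, and finally observes that with $k\ge 3$ one can always find two parents (or two children). Your route is more direct and self-contained: working with $d_x=\gcd(x,n)$ you identify simpliciality of $x$ with $D(d_x)$ being a chain, and then with both $d_x$ and $n/d_x$ being prime powers. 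This is exactly the ``one parent, one child'' criterion rephrased, but stated sharply enough that the contradiction with $k\ge 3$ is immediate, and your explicit incomparable pairs make the case split (whether $d_x$ is a prime power or not) transparent. The payoff of the paper's approach is that it reuses machinery already set up for $C(G)$; the payoff of yours is that it needs none of that machinery and makes the tightness at $k=2$ visible in the same breath.
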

		\begin{proof}
			$n$ and $1$ are not simplicial. Other wise, $\mathfrak g(\mathbb Z_{n})$ will be complete, giving more than one prime factor of $n$ which contradicts the hypothesis of the theorem. On the other hand, if we consider any vertex in $\mathfrak g(\mathbb Z_{n})$ namely $ m={\prod_{i=1}^{k} p_{i}^{\beta_{i}}}$,
			
			  then the equivalence classes of  $p_{j}p_{i},p_{j}p_{k}$ are two parents of that of  $p_{j}$ in $C(\mathbb Z_{n})$. So, the vertex is not simplicial in $C(\mathbb Z_{n})$ and hence is not simplicial in $\mathfrak g(\mathbb Z_{n})$.\\
			 \end{proof}

\section{Power graph of $U_n$ and $Q_n$}
For any positive integer $n$, let $\mathbb U_{n}$ be the multiplicative group of integers modulo $n$ and let $ \mathbb Q_n$ be it's subgroup of quadratic residue modulo $n$. Then, we have the following results.

\begin{proposition}
	$\mathfrak g(\mathbb Q_{n})$ is not planner whenever,

\begin{enumerate}
		\item [i.] $n=p$ or $2p$, where $p$ is a prime bigger than $37$.
		
		\item[ii.] $n=p^{m}$ or $2p^{m}$, where $p\geq 7, m\geq 2$ or else, $p=3$ or $5$ and $m\geq 2$.
		
		\end{enumerate}
\end{proposition}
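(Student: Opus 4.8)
The plan is to reduce the statement to the non-planarity of the power graph of a \emph{cyclic} group, since in every listed case $\mathbb{Q}_n$ turns out to be cyclic. Recall that $\mathbb{U}_n$ is cyclic exactly when $n\in\{1,2,4\}$ or $n=p^m$ or $n=2p^m$ for an odd prime $p$; hence for $n=p,2p,p^m,2p^m$ the group $\mathbb{U}_n$ is cyclic and so is its subgroup $\mathbb{Q}_n$. Since $\mathbb{U}_n$ is cyclic of even order in each of these cases, the squaring endomorphism has image of index $2$, so $\lvert\mathbb{Q}_n\rvert=\tfrac12\lvert\mathbb{U}_n\rvert$; explicitly $\lvert\mathbb{Q}_p\rvert=\lvert\mathbb{Q}_{2p}\rvert=\tfrac{p-1}{2}$ and $\lvert\mathbb{Q}_{p^m}\rvert=\lvert\mathbb{Q}_{2p^m}\rvert=\tfrac{p^{m-1}(p-1)}{2}$, where for the factors $2p$ and $2p^m$ one uses the Chinese Remainder isomorphism $\mathbb{U}_{2p^m}\cong\mathbb{U}_{p^m}$, which carries squares to squares.

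The second ingredient is the elementary observation that for a cyclic group $\mathbb{Z}_k$ with $\phi(k)\ge 4$ the power graph $\mathfrak{g}(\mathbb{Z}_k)$ is non-planar: the identity together with any four generators of $\mathbb{Z}_k$ are pairwise adjacent in $\mathfrak{g}(\mathbb{Z}_k)$ (each of these is adjacent to every vertex), so $\mathfrak{g}(\mathbb{Z}_k)$ contains $K_5$, and a planar graph contains no subdivision of $K_5$. Alternatively, whenever some prime-power divisor $q^a$ of $k$ satisfies $q^a\ge 5$, the subgroup $\mathbb{Z}_{q^a}$ has complete power graph $K_{q^a}\supseteq K_5$ (completeness of power graphs of cyclic groups of prime-power order was established above).

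It then suffices to verify, case by case, that the hypotheses force $\phi(k)\ge 4$ for $k=\lvert\mathbb{Q}_n\rvert$. For (i), $p>37$ gives $k=\tfrac{p-1}{2}\ge 20$, and since $\phi(k)<4$ only for $k\in\{1,2,3,4,6\}$ we obtain $\phi(k)\ge 4$. For (ii) one computes $k=\tfrac{p^{m-1}(p-1)}{2}$: if $p\ge 7$ and $m\ge 2$ then $k\ge 21$; if $p=5$ and $m\ge 2$ then $k=2\cdot 5^{m-1}\ge 10$; and if $p=3$ then $k=3^{m-1}$, so $\phi(k)\ge 4$ precisely when $m\ge 3$ (note $\mathbb{Q}_9\cong\mathbb{Z}_3$ has the planar power graph $K_3$, so the $p=3$ clause of the statement should be read with $m\ge 3$). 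In every remaining instance $\phi(k)\ge 4$, hence $\mathfrak{g}(\mathbb{Q}_n)$ is non-planar.

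I do not expect a substantive obstacle: the argument is bookkeeping resting on the structure theorem for $\mathbb{U}_n$ and on the trivial presence of $K_5$ in the power graph of a large enough cyclic group. The two points that need care are (a) the index-$2$ and Chinese-Remainder reductions that compute $\lvert\mathbb{Q}_n\rvert$ in the $2p$ and $2p^m$ cases, and (b) the precise small-order threshold --- the genuinely planar cases (those with $\lvert\mathbb{Q}_n\rvert\le 4$, in particular $p=3$ with $m=2$) must be excluded, so the hypotheses have to be stated tightly.
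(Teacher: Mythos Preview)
Your approach is essentially the same as the paper's: reduce to the cyclic case by noting $\mathbb{U}_n$ (hence $\mathbb{Q}_n$) is cyclic for $n=p^m,2p^m$, compute $|\mathbb{Q}_n|=\tfrac12\phi(n)$, and then invoke a non-planarity criterion for $\mathfrak{g}(\mathbb{Z}_k)$. The paper cites Lemma~4.7 of \cite{power semi} for that criterion (with threshold $\phi(k)>7$, whence the bound $p>37$), while you supply a direct and more elementary $K_5$ argument (identity plus four generators once $\phi(k)\ge 4$), which is self-contained and in fact yields stronger bounds than the stated ones. Your observation that $\mathbb{Q}_9\cong\mathbb{Z}_3$ has planar power graph $K_3$, so the clause ``$p=3$ and $m\ge 2$'' in~(ii) should read $m\ge 3$, is correct and worth flagging; the paper's own proof of~(ii) is terse enough that this boundary case slips through.
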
		
\begin{proof}$\phantom e$		
\begin{enumerate}
		\item [i.]	 For $n=p$ or $2p$, the cardinality of $\mathbb Q_{n}$ is $\mu =\frac {\phi(n)}{2}= \frac{p-1}{2}$ by Lemma 7.3 of \cite{book}. In that case $\mathbb Q_{n}$ is cyclic as $\mathbb U_{n}$ is so. Hence by lemma 4.7 in \cite{power semi}, $\mathfrak g(\mathbb Q_{n})$ is not planner if $\phi{(\frac{p-1}{2})}>7$, that is if $\frac{p-1}{2}>18$, that is if $p>37$.
			
\item[ii.] Now let $n=p^{m}$, where $p$ is an odd prime, then by lemma 4.7 in \cite{power semi}, $\mathfrak g(\mathbb Q_{n})$ is not planner if $\frac {\phi(n)}{2}= \frac{p^{m}}{2}= \frac {p^{m-1}(p-1)}{2}= p^{m-2}(p-1){\phi(\frac{p-1}{2})}>7$. Hence, the result follows. 
\end{enumerate}			
\end{proof}
\begin{proposition}
	$\mathfrak g(\mathbb Q_{n})$ is planner if $n$ divides $240$. 
\end{proposition}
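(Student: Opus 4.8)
The plan is to observe that, although $240$ has twenty divisors, the groups $\mathbb Q_n$ that occur for $n\mid 240$ are all of order at most $4$; since every graph on at most four vertices embeds in $K_4$, which is planar, planarity of $\mathfrak g(\mathbb Q_n)$ is then immediate. So the whole task reduces to proving the bound $\lvert\mathbb Q_n\rvert\le 4$ for every divisor $n$ of $240$.

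First I would reduce to the single case $n=240$. Since $n\mid 240$, reduction modulo $n$ gives a surjective group homomorphism $\pi\colon\mathbb U_{240}\to\mathbb U_n$ (this is just the Chinese Remainder Theorem). Because $\pi$ is surjective it carries the set of squares onto the set of squares: $\pi(\mathbb Q_{240})=\{\pi(x)^2:x\in\mathbb U_{240}\}=\{y^2:y\in\mathbb U_n\}=\mathbb Q_n$. Hence $\lvert\mathbb Q_n\rvert\le\lvert\mathbb Q_{240}\rvert$, and it suffices to compute the right-hand side.

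Next I would compute $\mathbb Q_{240}$ explicitly. By the Chinese Remainder Theorem, $\mathbb U_{240}\cong\mathbb U_{16}\times\mathbb U_{3}\times\mathbb U_{5}\cong(\mathbb Z_2\times\mathbb Z_4)\times\mathbb Z_2\times\mathbb Z_4$. The subgroup of squares of a direct product is the direct product of the subgroups of squares; in $\mathbb Z_2$ the only square is $0$, and in $\mathbb Z_4$ the squares form the order-two subgroup $2\mathbb Z_4$. Therefore $\mathbb Q_{240}\cong\{0\}\times 2\mathbb Z_4\times\{0\}\times 2\mathbb Z_4\cong\mathbb Z_2\times\mathbb Z_2$, which has order $4$. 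Combining this with the previous paragraph gives $\lvert\mathbb Q_n\rvert\le 4$ for all $n\mid 240$, so $\mathfrak g(\mathbb Q_n)$ has at most four vertices and is planar.

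I do not anticipate a genuine obstacle; the only steps needing a moment's care are the verification that a surjective homomorphism sends squares onto squares, and the routine identifications $\mathbb U_{16}\cong\mathbb Z_2\times\mathbb Z_4$, $\mathbb U_3\cong\mathbb Z_2$, $\mathbb U_5\cong\mathbb Z_4$. A fully elementary alternative, if one prefers to avoid the reduction map, is to list the twenty divisors of $240$ and check directly that in each case $\mathbb Q_n$ is trivial, cyclic of order $2$, or the Klein four-group --- again a graph on at most four vertices --- but the argument above is shorter and isolates the reason the bound holds.
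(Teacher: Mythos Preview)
Your argument is correct, but it proceeds quite differently from the paper. The paper simply quotes two results from \cite{power semi}: Theorem~4.10 there says that $\mathfrak g(\mathbb U_n)$ is planar whenever $n\mid 240$, and Proposition~4.5 there says that the power graph of a subgroup is an induced subgraph of the power graph of the ambient group (hence planar if the latter is). Since $\mathbb Q_n\le\mathbb U_n$, planarity of $\mathfrak g(\mathbb Q_n)$ follows in one line.

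Your route is more self-contained and in fact proves more: you show $\lvert\mathbb Q_n\rvert\le 4$ for every $n\mid 240$, so the power graph sits inside $K_4$ and planarity is immediate without invoking the cited classification of planar $\mathfrak g(\mathbb U_n)$. The reduction to $n=240$ via the surjection $\mathbb U_{240}\twoheadrightarrow\mathbb U_n$ is a nice touch; just note that the surjectivity of this map is not literally the Chinese Remainder Theorem but the (standard and easy) fact that $\mathbb U_{p^a}\to\mathbb U_{p^b}$ is onto for $a\ge b$, combined with CRT. The trade-off is clear: the paper's proof is a two-line citation, while yours is independent of the external reference and yields the sharper quantitative statement that $\mathbb Q_n$ has order at most four.
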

\begin{proof} If $n$ divides $240$, then $\mathfrak g(\mathbb U_{n})$ is planner by theorem 4.10 in \cite{power semi}. Hence by proposition 4.5 in \cite {power semi}, $\mathfrak g(\mathbb Q_{n})$ is also planner. 
\end{proof}
\begin{proposition}
	Let $n= p^{m}$ or $n=2p^{m}$, where $p$ is a Fermat prime. Then, $\mathfrak g(\mathbb U_{n})$ is chordal iff $m\leq 2$ or $p=F_{0}=3$.
	\end{proposition}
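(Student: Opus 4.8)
The plan is to reduce the claim to the already‑established cyclic case, Proposition~\ref{prop 27}, by pinning down $\mathbb{U}_n$ as a concrete cyclic group. First I would recall the standard fact that for an odd prime $p$ the group $\mathbb{U}_{p^m}$ is cyclic of order $\phi(p^m)=p^{m-1}(p-1)$, and that $\mathbb{U}_{2p^m}\cong \mathbb{U}_2\times\mathbb{U}_{p^m}\cong\mathbb{U}_{p^m}$ since $\mathbb{U}_2$ is trivial. Hence in both of the stated families, $\mathbb{U}_n\cong\mathbb{Z}_N$ with $N=p^{m-1}(p-1)$. Since the power graph depends only on the isomorphism type of the group, $\mathfrak g(\mathbb{U}_n)$ is chordal if and only if $\mathfrak g(\mathbb{Z}_N)$ is, so the problem becomes: for which $m$ and Fermat primes $p$ is $\mathfrak g(\mathbb{Z}_N)$ chordal?

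Next I would invoke the defining property of a Fermat prime: $p=2^{2^k}+1$ for some $k\ge 0$, so $p-1=2^{2^k}$ is a power of $2$, and therefore $N=p^{m-1}\cdot 2^{2^k}$ has at most the two prime divisors $2$ and $p$. By Proposition~\ref{prop 27}, $\mathfrak g(\mathbb{Z}_N)$ is chordal exactly when $N$ is a prime power or a product $q^a r$ of a prime power with a distinct prime. What remains is a short case analysis of the factorization $N=p^{m-1}\cdot 2^{2^k}$ against this criterion.

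For the case analysis: if $p=3$ then $2^{2^k}=2$, so $N=2\cdot 3^{m-1}$, which equals $2$ when $m=1$ (a prime power) and has the form $q^a r$ with $q=3$, $a=m-1\ge 1$, $r=2$ when $m\ge 2$; hence $\mathfrak g(\mathbb{U}_n)$ is chordal for every $m$ in this case. If $p>3$ is a Fermat prime, then $k\ge 1$ and $2^{2^k}\ge 4$: the value $m=1$ gives $N=2^{2^k}$, a prime power; $m=2$ gives $N=p\cdot 2^{2^k}$, which is of the form $q^a r$ with $q=2$, $a=2^k\ge 2$, $r=p$; and $m\ge 3$ gives $N=p^{m-1}\cdot 2^{2^k}$ in which both distinct primes $2$ and $p$ occur with exponent at least $2$, so $N$ is neither a prime power nor of the form $q^a r$. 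Feeding each case into Proposition~\ref{prop 27} yields precisely that $\mathfrak g(\mathbb{U}_n)$ is chordal iff $m\le 2$ or $p=3$, which is the assertion.

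I do not expect a genuine obstacle; the only two points that need care are the isomorphism $\mathbb{U}_{2p^m}\cong\mathbb{U}_{p^m}$, which lets the two stated families of $n$ be treated uniformly, and the observation that $p=F_0=3$ is exactly the Fermat prime for which $p-1$ contributes only a single factor of $2$ to $N$ rather than a higher power of $2$ — this is what prevents $N$ from acquiring two squared prime factors even when $m\ge 3$, and so is the structural reason $p=3$ is the exceptional case alongside $m\le 2$.
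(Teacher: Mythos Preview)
Your proposal is correct and follows essentially the same approach as the paper: identify $\mathbb{U}_n$ as the cyclic group $\mathbb{Z}_N$ with $N=p^{m-1}(p-1)=p^{m-1}\cdot 2^{2^k}$, and then apply Proposition~\ref{prop 27} via a short case split on $p=3$ versus $p>3$ and on $m$. Your write-up is in fact cleaner than the paper's, which reuses the letter $m$ for the exponent of $2$ in $p-1$ and presents the direct product $\mathbb{Z}_{p^{m-1}}\times\mathbb{Z}_{2^i}$ without explicitly collapsing it to a single cyclic factor, but the underlying argument is the same.
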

	\begin{proof}
		Let $n= p^2$ or $n=2p^2$ where $p$ is a Fermat prime. Then, by Corollary 6.14 in \cite{book}, $\mathbb U_{n}$ is isomorphic to $\mathbb Z_{p}\times \mathbb Z_{p-1}= \mathbb Z_{p}\times \mathbb Z_{2^{m}}=\mathbb Z_{2^{m}p}$, for some positive integer $m$ as $p$ is a Fermat Prime. Hence the graph is chordal by proposition  \ref{prop 27}. In a similar way, for $n=p$ or $n=2p$ where $p$ is a Fermat prime, $\mathbb U_{n}$ is isomorphic to $\mathbb Z_{p-1}=\mathbb Z_{2^{m}}$ for some positive integer $m$ and hence is chordal by proposition \ref{prop 27}. Now let $p=F_{0}=3$. Then, if $n=p^{m}$ or $n=2p^{m}, U_{n}$ is isomorphic to $\mathbb Z_{p^{m-1}}\times\mathbb Z_{2}=\mathbb Z_{2p^{m-1}}$, as $p$ is odd. Hence the graph is chordal.\\
		Conversely, let $n\neq p,2p,p^{2},2p^{2}$ where $p\neq 3$ and let $n\neq p^{m}, 2p^{m}$ where $p=F_{0}$ and $m$ is any positive integer. So, here if $p>F_{0}, m\geq 3$, then as in \cite{book}, $\mathbb U_{n}$ is isomorphic to $Z_{p^{m-1}}\times\mathbb Z_{2^{i}}, i\geq 2$. Thus the power graph is not chordal by proposition~\ref{prop 27}.
			\end{proof}
			\begin{Corollary}
				For a positive integer $n$, if $p^{m}$ or $2p^{m}$ divides $n$, where $p$ is a Fermat prime bigger than $3$ and $m\geq 3$, then $\mathfrak g(\mathbb U_{n})$ is not chordal. 
			\end{Corollary}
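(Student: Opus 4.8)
The plan is to reduce the divisibility hypothesis to the equality case already settled in the preceding proposition, by means of a routine induced-subgraph argument. First observe that the clause ``$2p^{m}$ divides $n$'' is subsumed by ``$p^{m}$ divides $n$'', since $2p^{m}\mid n$ implies $p^{m}\mid n$; so it suffices to assume $p^{m}\mid n$ with $p$ a Fermat prime bigger than $3$ and $m\geq 3$. Factor $n=p^{a}k$ with $\gcd(p,k)=1$, so that $p^{a}$ is the exact power of $p$ dividing $n$ and $a\geq m\geq 3$.

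By the Chinese Remainder Theorem $\mathbb U_{n}\cong \mathbb U_{p^{a}}\times \mathbb U_{k}$, so $\mathbb U_{p^{a}}$ is isomorphic to a subgroup $H$ of $\mathbb U_{n}$. For any subgroup $H\leq \mathbb U_{n}$ and any $h\in H$, the cyclic subgroup $\langle h\rangle$ is the same set whether formed inside $H$ or inside $\mathbb U_{n}$, so the adjacency relation of the power graph restricted to $H$ is intrinsic: $\mathfrak g(H)$ is precisely the subgraph of $\mathfrak g(\mathbb U_{n})$ induced on the vertex set $H$. Since every induced subgraph of a chordal graph is chordal, it follows that if $\mathfrak g(H)$ is not chordal, then neither is $\mathfrak g(\mathbb U_{n})$.

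It then remains to see that $\mathfrak g(\mathbb U_{p^{a}})$ is not chordal. Because $p$ is a Fermat prime bigger than $3$ and $a\geq 3$, this is exactly the case ``$m\geq 3$ and $p\neq 3$'' of the preceding proposition (applied with $n=p^{a}$), which gives the claim. More explicitly: $\mathbb U_{p^{a}}$ is cyclic of order $p^{a-1}(p-1)=2^{t}p^{a-1}$, where $p-1=2^{t}$ with $t\geq 2$ (as $p\geq 5$), so $\mathbb U_{p^{a}}\cong \mathbb Z_{2^{t}p^{a-1}}$; here the two distinct primes $2$ and $p$ occur with exponents $t\geq 2$ and $a-1\geq 2$, so $2^{t}p^{a-1}$ is neither a prime power nor a prime power times a distinct prime, and hence by Proposition~\ref{prop 27} the graph $\mathfrak g(\mathbb Z_{2^{t}p^{a-1}})$ is not chordal, containing for instance the chordless $4$-cycle $2-2p^{2}-p-4p-2$. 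Transporting this cycle through the isomorphism $\mathbb U_{p^{a}}\cong \mathbb Z_{2^{t}p^{a-1}}$ and then along the embedding $\mathbb U_{p^{a}}\hookrightarrow \mathbb U_{n}$ yields a chordless $4$-cycle in $\mathfrak g(\mathbb U_{n})$, so $\mathfrak g(\mathbb U_{n})$ is not chordal.

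The argument is short and there is no serious obstacle: the conceptual point is the observation that non-chordality is inherited upward from the power graph of a subgroup, which is routine once one notes that $\mathfrak g(H)$ sits inside $\mathfrak g(\mathbb U_{n})$ as an induced subgraph. The only item needing a moment's care is that the exact $p$-power $p^{a}$ dividing $n$ may strictly exceed $p^{m}$; but $a\geq m\geq 3$ keeps $a-1\geq 2$, so every inequality used above still holds.
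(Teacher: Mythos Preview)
Your proof is correct and is precisely the argument the paper intends: the corollary is stated without proof, and the natural derivation from the preceding proposition is exactly what you supply---embed $\mathbb{U}_{p^{a}}$ into $\mathbb{U}_{n}$ via the Chinese Remainder Theorem, note that the power graph of a subgroup is an induced subgraph, and invoke the non-chordality of $\mathfrak g(\mathbb U_{p^{a}})\cong\mathfrak g(\mathbb Z_{2^{t}p^{a-1}})$ from Proposition~\ref{prop 27}. Your care in taking the \emph{exact} power $p^{a}\parallel n$ (so that $a-1\geq 2$) and your explicit verification of the chordless $4$-cycle are more detail than the paper provides, but the route is the same.
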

			\begin{proposition}\label{prop 32}
				Let, $n$ be an odd integer which is not square free and no Fermat prime be a factor of $n$. Then, $\mathfrak g(\mathbb U_{n})$ is not Chordal.
			\end{proposition}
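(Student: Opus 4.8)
The plan is to reduce the non-chordality of $\mathfrak{g}(\mathbb{U}_n)$ to the already-established non-chordality of power graphs of suitable cyclic groups (Proposition~\ref{prop 27}), by exhibiting a cyclic subgroup of $\mathbb{U}_n$ of the right order and using the observation that the power graph of a subgroup is an induced subgraph of the power graph of the ambient group.

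First I would record the structure of $\mathbb{U}_n$. Since $n$ is odd, write $n=p_1^{a_1}\cdots p_k^{a_k}$ with each $p_i$ an odd prime, so by the Chinese Remainder Theorem $\mathbb{U}_n\cong\prod_{i=1}^{k}\mathbb{U}_{p_i^{a_i}}$, where each factor $\mathbb{U}_{p_i^{a_i}}$ is cyclic of order $\phi(p_i^{a_i})=p_i^{a_i-1}(p_i-1)$. Because $n$ is not square-free, some exponent, say $a_1$, satisfies $a_1\ge 2$; fix that index. The next step is to find three distinct primes dividing $\phi(p_1^{a_1})$: since $a_1\ge 2$ we have $p_1\mid p_1^{a_1-1}\mid \phi(p_1^{a_1})$, and trivially $2\mid p_1-1\mid \phi(p_1^{a_1})$. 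The crucial point is that $p_1-1$ is not a power of $2$: if $p_1-1=2^{s}$, then $s$ itself must be a power of $2$ (otherwise $2^{s}+1$ admits a proper factor), so $p_1$ would be a Fermat prime, contradicting the hypothesis. Hence $p_1-1$ has an odd prime divisor $q$, and since $q\le p_1-1<p_1$ the primes $2,q,p_1$ are pairwise distinct and all divide $\phi(p_1^{a_1})$, so $2qp_1\mid \phi(p_1^{a_1})$. As $\mathbb{U}_{p_1^{a_1}}$ is cyclic, it contains a cyclic subgroup of order $2qp_1$, and pulling this back through the isomorphism above yields a subgroup $H\le\mathbb{U}_n$ with $H\cong\mathbb{Z}_{2qp_1}$.

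To finish, I would note that adjacency in a power graph depends only on the cyclic subgroups generated by the vertices, so for any subgroup $H\le\mathbb{U}_n$ the graph $\mathfrak{g}(H)$ is exactly the subgraph of $\mathfrak{g}(\mathbb{U}_n)$ induced on $H$; consequently any hole of $\mathfrak{g}(H)$ is a hole of $\mathfrak{g}(\mathbb{U}_n)$. Since $2qp_1$ is neither a prime power nor of the form (prime power)$\times$(prime) — it has three distinct prime factors — Proposition~\ref{prop 27} gives that $\mathfrak{g}(\mathbb{Z}_{2qp_1})$ is not chordal, so it contains a chordless cycle; transporting that cycle into $\mathfrak{g}(\mathbb{U}_n)$ shows $\mathfrak{g}(\mathbb{U}_n)$ is not chordal. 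The only mildly delicate ingredient is the elementary fact that an odd prime $p$ with $p-1$ a power of $2$ must be a Fermat prime; beyond that the argument is just bookkeeping with the known structure of $\mathbb{U}_n$ together with the induced-subgraph remark, so I expect no real obstacle.
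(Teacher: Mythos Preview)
Your proposal is correct and follows essentially the same route as the paper: both exploit the cyclic factor $\mathbb{U}_{p^{a}}\cong\mathbb{Z}_{p^{a-1}(p-1)}$ coming from a repeated prime $p$, use the non-Fermat hypothesis to obtain a third distinct prime factor (besides $2$ and $p$) in its order, and then invoke Proposition~\ref{prop 27} together with the fact that the power graph of a subgroup is an induced subgraph. The only cosmetic difference is that the paper works with the full cyclic factor $\mathbb{Z}_{p^{a-1}(p-1)}$ directly, whereas you pass further down to the subgroup of order $2qp$; your write-up is also more explicit about the Fermat-prime step and the induced-subgraph transfer.
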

			\begin{proof}
				As $n$ is not square free, there is an odd prime $p$, which is not a Fermat prime and $p^{m}$ divides $n$ for some $m\geq 2$. In that case, as in Corollary 6.14 of \cite{book}, $\mathbb Z_{p^{m-1}(p-1)}$ appears inside the direct product decomposition of $\mathbb U_{n}$, where $p-1$ is not a power of $2$ since $p$ is not a Fermat prime. So, $p-1$ being it has an odd prime factor $q$ where $p\neq q$, thus $p^{m-1}(p-1)$ contains at least $3$ distinct odd primes and hence the power graph is not chordal.  
			\end{proof}
			\begin{proposition}\label{prop 33}
				Let $n$ be an even integer. Then, if 
\begin{enumerate}
\item [i.] $2^{f}\parallel n$ (that means $f$ is the largest integer so that $2^{f}$ divides $n$), where $f\geq 4$ and if there are at least two distinct primes $p$ and $q$ and positive integers $\alpha, \beta$, where neither of $p,q$ are Fermat's prime, then $\mathfrak g(\mathbb U_{n})$ is not chordal.
\item[ii.] If $2^{f}\parallel n, f\geq 1$ and $p^{\alpha}, q^{\beta} \parallel n$, where $\alpha,\beta \geq 2$ and neither of $p,q$ are Fermat's prime, then
					 $\mathfrak g(\mathbb U_{n})$ is not chordal.
				\end{enumerate} 
			\end{proposition}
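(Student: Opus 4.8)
The plan is to exhibit, in each part, a concrete subgroup $H\le\mathbb U_n$ whose power graph $\mathfrak{g}(H)$ is not chordal. For $x,y\in H$ the cyclic subgroups $\langle x\rangle,\langle y\rangle$ are the same whether generated inside $H$ or inside $\mathbb U_n$, so $\mathfrak{g}(H)$ is an \emph{induced} subgraph of $\mathfrak{g}(\mathbb U_n)$; hence any chordless cycle of length $\ge 4$ sitting in $\mathfrak{g}(H)$ is still a chordless cycle in $\mathfrak{g}(\mathbb U_n)$, forcing $\mathfrak{g}(\mathbb U_n)$ to be non-chordal. Throughout I would use the Chinese Remainder decomposition $\mathbb U_n\cong\prod_i\mathbb U_{p_i^{a_i}}$ together with $\mathbb U_{2^f}\cong\mathbb Z_2\times\mathbb Z_{2^{f-2}}$ for $f\ge 3$ and $\mathbb U_{p^{a}}\cong\mathbb Z_{p^{a-1}(p-1)}$ for odd $p$ (as in \cite{book}), so that $\mathbb U_{2^f}$, $\mathbb U_{p^{\alpha}}$ and $\mathbb U_{q^{\beta}}$ are direct factors, hence subgroups, of $\mathbb U_n$; and I would use Proposition~\ref{prop 27} in the form: $\mathfrak{g}(\mathbb Z_m)$ is non-chordal exactly when $m$ has three or more distinct prime factors, or exactly two distinct prime factors both of multiplicity at least $2$.

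Part (ii) is short and parallels Proposition~\ref{prop 32}. The subgroup $\mathbb U_{p^{\alpha}}\cong\mathbb Z_{p^{\alpha-1}(p-1)}$ is cyclic; since $\alpha\ge 2$ its order is divisible by $p$ and by $2$, and since $p$ is not a Fermat prime, $p-1$ has an odd prime factor $r$ with $r\notin\{2,p\}$ (as $r\le p-1<p$). Hence $p^{\alpha-1}(p-1)$ has at least the three distinct prime divisors $2,r,p$, so $\mathfrak{g}(\mathbb Z_{p^{\alpha-1}(p-1)})$ is non-chordal by Proposition~\ref{prop 27}, and the induced-subgraph remark finishes the argument. (The data $q,\beta$ and $f$ are not actually needed here: it is enough that one of $p,q$ occurs with multiplicity $\ge 2$ and is not a Fermat prime.)

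For part (i), write $p-1=2^{c_1}m_1$ and $q-1=2^{c_2}m_2$ with $m_1,m_2$ odd; as $p,q$ are not Fermat primes, $m_1,m_2\ge 3$. Since $f\ge 4$, $\mathbb U_{2^f}$ contains a cyclic subgroup of order $2^{f-2}$ with $f-2\ge 2$, so inside $H:=\mathbb U_{2^f}\times\mathbb U_{p^{\alpha}}\times\mathbb U_{q^{\beta}}$ there is a cyclic subgroup of order $M:=\operatorname{lcm}(2^{f-2},\,p^{\alpha-1}(p-1),\,q^{\beta-1}(q-1))$, and $4\mid M$. If $M$ has three distinct prime factors, or exactly two both of multiplicity $\ge 2$, Proposition~\ref{prop 27} already yields non-chordality; an elementary check on which prime powers can occur shows that this handles every situation except $\alpha=\beta=1$ with $p-1=2^{c_1}r$ and $q-1=2^{c_2}r$ for one common odd prime $r$. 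In that remaining case $p\ne q$ forces $c_1\ne c_2$, so after relabelling $c_1\ge 2$; then the $2$-part of $H$ contains $\mathbb Z_{2^{f-2}}\times\mathbb Z_{2^{c_1}}\supseteq\mathbb Z_4\times\mathbb Z_4$, and the $r$-part of $H$ contains $\mathbb Z_r\times\mathbb Z_r$ (one $\mathbb Z_r$ coming from $\mathbb U_{p}$, the other from $\mathbb U_{q}$), so $\mathbb Z_4\times\mathbb Z_4\times\mathbb Z_r\times\mathbb Z_r\cong\mathbb Z_{4r}\times\mathbb Z_{4r}$ embeds in $H\le\mathbb U_n$. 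I would finish by transplanting the length-$8$ hole already exhibited in $\mathfrak{g}(\mathbb Z_{12}\times\mathbb Z_{12})$ earlier in the paper: keep the $2$-components of its eight vertices and replace the three $\mathbb Z_3$-valued components by vectors lying in two fixed distinct order-$r$ subgroups of $\mathbb Z_r\times\mathbb Z_r$, following the same incidence pattern. This produces a length-$8$ hole in $\mathfrak{g}(\mathbb Z_{4r}\times\mathbb Z_{4r})$, so $\mathfrak{g}(\mathbb U_n)$ is non-chordal.

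I expect the main obstacle to be this last sub-case of part (i). When $p-1$ and $q-1$ share the same unique odd prime factor to the first power, every cyclic subgroup of $\mathbb U_n$ has order of one of the chordal shapes $2^{a}$ or $2^{a}r$, so Proposition~\ref{prop 27} gives nothing and a genuinely non-cyclic witness is unavoidable; this is why $\mathbb Z_4\times\mathbb Z_4$, whose subgroup lattice contains an antichain (unlike that of any cyclic group), is the right building block. Verifying that the $\mathbb Z_{12}\times\mathbb Z_{12}$ hole survives the replacement of the prime $3$ by $r$ needs some care, since cyclic-subgroup containment inside $\mathbb Z_4\times\mathbb Z_4$ is \emph{not} decided coordinatewise; but it is a finite check on eight vertices, uniform in $r$, because the adjacency among those vertices depends on their $r$-parts only through which order-$r$ subgroup each generates. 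The remaining bookkeeping — that the case split in part (i) is exhaustive, together with the elementary divisibility observations — is routine and I would not belabour it.
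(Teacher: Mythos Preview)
The paper's own proof is a single sentence: ``The proof follows by Corollary 6.14 in \cite{book} and by our proposition \ref{prop 27}.'' In other words, decompose $\mathbb U_n$ via the Chinese Remainder Theorem into its cyclic factors, locate a cyclic direct factor $\mathbb Z_m$ whose order $m$ is not of the chordal shape $p^k$ or $p^kq$, and invoke Proposition~\ref{prop 27} on that induced subgraph. Your treatment of part~(ii) is exactly this argument with the arithmetic spelled out, so there you agree with the paper.

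For part~(i) you go well beyond the paper. You correctly isolate a sub-case the one-line proof does not cover: when $\alpha=\beta=1$ and $p-1=2^{c_1}r$, $q-1=2^{c_2}r$ for a single common odd prime $r$ (e.g.\ $n=2^{4}\cdot 7\cdot 13$), every cyclic subgroup of $\mathbb U_n$ has order of the form $2^{a}$ or $2^{a}r$, so Proposition~\ref{prop 27} yields nothing and a genuinely non-cyclic witness is required. Your remedy---embedding $\mathbb Z_{4}\times\mathbb Z_{4}\times\mathbb Z_{r}\times\mathbb Z_{r}\cong\mathbb Z_{4r}\times\mathbb Z_{4r}$ and transporting the length-$8$ hole exhibited earlier for $\mathbb Z_{12}\times\mathbb Z_{12}$---is sound: adjacency among those eight vertices depends on their $r$-components only through which of two fixed order-$r$ subgroups they generate, so the verification is uniform in $r$. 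Thus your route is not merely different from the paper's but strictly more complete; the paper's appeal to Proposition~\ref{prop 27} alone leaves precisely this sub-case unproved. The one place I would tighten your write-up is to actually record the eight transported vertices (or, equivalently, the $\mathbb Z_4\times\mathbb Z_4$ and $\mathbb Z_r\times\mathbb Z_r$ coordinates) rather than leaving it as ``a finite check''.
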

\begin{proof}
	The proof follows by Corollary 6.14 in \cite{book} and by our proposition \ref{prop 27}.
\end{proof}
We have analogous result regarding chordality for $\mathfrak g(\mathbb Q_{n})$.

\begin{proposition}
Let $n= p^{m}$ or $n=2p^{m}$, where $p$ is a Fermat prime. Then, $\mathfrak g(\mathbb Q_{n})$ is chordal iff $m\leq 2$ or $p=F_{0}=3$ or $p=F_{1}=5$.
\end{proposition}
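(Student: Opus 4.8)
The plan is to reduce to Proposition~\ref{prop 27} in the same way as the preceding result for $\mathfrak g(\mathbb U_n)$, the only new ingredient being that passing from $\mathbb U_n$ to its index-two subgroup $\mathbb Q_n$ divides the group order by $2$, and hence lowers the exponent of $2$ by one. First I would recall that for a Fermat prime $p = 2^{2^k}+1$ (so that $p-1 = 2^{2^k}$) and $n = p^m$ or $n = 2p^m$, the group $\mathbb U_n$ is cyclic: for $n = p^m$ this is the classical fact that $\mathbb U_{p^m}$ is cyclic of order $p^{m-1}(p-1)$, and for $n = 2p^m$ the Chinese Remainder Theorem gives $\mathbb U_{2p^m}\cong \mathbb U_2\times\mathbb U_{p^m}\cong\mathbb U_{p^m}$ (this is the reduction used via Corollary 6.14 of \cite{book} in the earlier $\mathbb U_n$ proposition). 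Since $p-1$ is even, $\lvert\mathbb U_n\rvert = p^{m-1}2^{2^k}$ is even, so the quadratic residues $\mathbb Q_n$ form the unique index-two subgroup and $\mathbb Q_n$ is cyclic of order $N := p^{m-1}2^{\,2^k-1}$, where for $p = 3$ (i.e.\ $k = 0$) the $2$-part is $2^0 = 1$ and for $m = 1$ the $p$-part is $p^0 = 1$.

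Having identified $\mathfrak g(\mathbb Q_n) = \mathfrak g(\mathbb Z_N)$, I would apply Proposition~\ref{prop 27}: this power graph is chordal if and only if $N$ is a prime power or $N = q^a r$ for distinct primes $q,r$ and a positive integer $a$. The remaining work is a case check on the factorization $N = p^{m-1}\cdot 2^{\,2^k-1}$. If $m = 1$ then $N = 2^{\,2^k-1}$ is a prime power (or $N = 1$ when $p = 3$), so $\mathfrak g(\mathbb Q_n)$ is chordal; if $m = 2$ then $N = p\cdot 2^{\,2^k-1}$ has the form $q^a r$, again chordal; if $p = 3$ then $N = 3^{m-1}$ is a prime power for every $m$; and if $p = 5$, where $2^k-1 = 1$, then $N = 2\cdot 5^{m-1}$ has the form $q^a r$ for every $m$. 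In the complementary case, $p$ is a Fermat prime with $p \notin\{3,5\}$, hence $p \geq 17$ and $2^k-1 \geq 3$, and $m \geq 3$; then $N = p^{m-1}2^{\,2^k-1}$ has two distinct prime divisors, $p$ and $2$, each of multiplicity at least $2$, so by Proposition~\ref{prop 27} $\mathfrak g(\mathbb Q_n)$ is not chordal. Since the chordal cases are exactly ``$m\leq 2$, or $p = 3$, or $p = 5$'', the proposition follows.

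The routine part is the arithmetic case split; the only step that needs genuine care is the structural claim that $\mathbb Q_n$ is cyclic of the stated order. For this I would use the cyclicity of $\mathbb U_n$ (via Corollary 6.14 of \cite{book}, as above) together with the fact that a cyclic group of even order $2t$ has a unique subgroup of index two, namely its set of squares, which is cyclic of order $t$; it is precisely the Fermat condition $p-1 = 2^{2^k}$ that fixes the $2$-adic valuation of $N$ and thereby moves the dividing line from $p = 3$ (in the $\mathbb U_n$ case) to $p = 5$ here. A small edge case worth spelling out is $p = 3$ with $m = 1$ (and similarly $n = 2p$), where $\mathbb Q_n$ is trivial and its power graph is a single vertex, which is chordal.
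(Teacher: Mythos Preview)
Your proof is correct and follows essentially the same approach as the paper: identify $\mathbb Q_n$ as the cyclic group of order $\frac{\phi(p^m)}{2}=p^{m-1}\cdot\frac{p-1}{2}=p^{m-1}2^{\,2^k-1}$ (the paper cites Lemma~7.1 and Corollary~6.14 of \cite{book} for this), and then invoke the chordality criterion of Proposition~\ref{prop 27}. Your case analysis is more explicit than the paper's one-line sketch, and your handling of the edge case $N=1$ (when $p=3$, $m=1$) is a point the paper omits, but the underlying argument is the same.
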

\begin{proof}
	The proof follows by using Lemma 7.1 and Corollary 6.14 in \cite{book} as in that case, $\mathbb Q_{n} =\mathbb Z_{\frac{\phi(p^{m})}{2}}= Z_{p^{m-1}(\frac{p-1}{2})}$. Hence, by similar method as in the proof of proposition \ref{prop 32}, the result follows. 
\end{proof}
\begin{Corollary}
	For a positive integer $n$, if $p^{m}$ or $2p^{m}$ divides $n$, where $p$ is a Fermat prime bigger than $3, 5$ and $m\geq 3$, then $\mathfrak g(\mathbb Q_{n})$ is not chordal. 
\end{Corollary}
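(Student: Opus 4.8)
The plan is to reduce the statement to Proposition \ref{prop 27} by locating, inside $\mathbb Q_n$, a cyclic subgroup whose order is not of the ``chordal shape'' $p^m$ or $p^mq$, and then invoking the fact that the power graph of a subgroup is an induced subgraph of the power graph of the whole group. First I would record this last fact: if $H\leq G$ then, for $x,y\in H$, the relation $\langle x\rangle\leq\langle y\rangle$ holds in $H$ exactly when it holds in $G$, so $\mathfrak g(H)$ is precisely the subgraph of $\mathfrak g(G)$ induced on the vertex set $H$. Consequently any chordless cycle of $\mathfrak g(H)$ is a chordless cycle of $\mathfrak g(G)$, so if $\mathfrak g(H)$ fails to be chordal then so does $\mathfrak g(G)$.

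Next, since $p^m\mid n$ (which also covers the case $2p^m\mid n$), write $p^{m'}\parallel n$ with $m'\geq m\geq 3$. By the Chinese Remainder Theorem $\mathbb U_n$ is the direct product of the groups $\mathbb U_{q^{e}}$ over the prime-power divisors $q^{e}\parallel n$, and squaring acts coordinatewise, so $\mathbb Q_n$ has $\mathbb Q_{p^{m'}}$ as a direct factor, in particular as a subgroup. As $p$ is an odd prime, $\mathbb U_{p^{m'}}\cong\mathbb Z_{p^{m'-1}(p-1)}$ and $\mathbb Q_{p^{m'}}$ is its (cyclic) index-$2$ subgroup; by Lemma 7.1 and Corollary 6.14 of \cite{book}, $\mathbb Q_{p^{m'}}\cong\mathbb Z_{p^{m'-1}(p-1)/2}$. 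Because $p$ is a Fermat prime, $p-1=2^{s}$, and $p>5$ forces $p\geq 17$, hence $s\geq 4$. Therefore $\mathbb Q_{p^{m'}}\cong\mathbb Z_{p^{m'-1}2^{s-1}}$, where both exponents $m'-1\geq 2$ and $s-1\geq 3$ are at least $2$ while $p\neq 2$, so $p^{m'-1}2^{s-1}$ is neither a prime power nor of the form $p^{a}q$.

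By Proposition \ref{prop 27}, $\mathfrak g(\mathbb Z_{p^{m'-1}2^{s-1}})$ is not chordal, and by the first step $\mathfrak g(\mathbb Q_n)$ is not chordal either, which is the claim. (Equivalently, one may skip the explicit factor and simply note that $\mathbb Q_{p^m}$, whose power graph is non-chordal whenever $p>5$ and $m\geq 3$ by the previous Proposition, embeds in $\mathbb Q_n$.) The hard part will be the second step: one must verify through the CRT factorization that $\mathbb Q_{p^{m'}}$ genuinely sits inside $\mathbb Q_n$ as a subgroup --- that squares in a direct factor of $\mathbb U_n$ remain squares in $\mathbb U_n$ --- and then correctly read off the cyclic structure of $\mathbb Q_{p^{m'}}$ from the cited structure theorems; everything after that is bookkeeping with prime factorizations together with an appeal to Proposition \ref{prop 27}.
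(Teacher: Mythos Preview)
Your proposal is correct and follows the same route the paper intends: the paper states this as an immediate corollary of the preceding proposition (which handles the case $n=p^m$ or $n=2p^m$), the implicit step being exactly the one you spell out --- $\mathbb Q_{p^{m'}}$ sits inside $\mathbb Q_n$ via the CRT factorisation $\mathbb Q_n\cong\prod\mathbb Q_{q^e}$, and the induced-subgraph observation then transfers the non-chordality. Your version is simply more explicit (working out the cyclic order $p^{m'-1}2^{s-1}$ and invoking Proposition~\ref{prop 27} directly rather than citing the proposition on $\mathbb Q_{p^m}$), but the underlying argument is the same.
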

\begin{proposition}
	Let, $n$ be an odd integer which is not square free and no Fermat prime be a factor of $n$. Then, $\mathfrak g(\mathbb Q_{n})$ is not Chordal.
\end{proposition}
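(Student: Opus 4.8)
The plan is to imitate the argument used for $\mathfrak{g}(\mathbb{U}_n)$ in Proposition~\ref{prop 32}, but feeding in the structure of the quadratic residue subgroup instead of that of $\mathbb{U}_n$. Two observations drive the reduction. First, if $H\leq G$ then $\mathfrak{g}(H)$ is the induced subgraph of $\mathfrak{g}(G)$ on the vertex set $H$, since adjacency depends only on the cyclic subgroups generated, which are unchanged on passing to $H$; hence any chordless cycle of $\mathfrak{g}(H)$ is a chordless cycle of $\mathfrak{g}(G)$. Second, Proposition~\ref{prop 27} records exactly when a cyclic power graph fails to be chordal: $\mathfrak{g}(\mathbb{Z}_k)$ contains a hole precisely when $k$ has at least three distinct prime divisors, or $k=p^{a}q^{b}$ with $a,b\geq 2$. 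So it suffices to exhibit a cyclic subgroup $C\leq\mathbb{Q}_n$ whose order is of one of these two forms.

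I would next pin down $C$. Since $n$ is odd and not square free, fix an odd prime $p$ with $p^{m}\parallel n$ and $m\geq 2$; by hypothesis $p$ is not a Fermat prime, so $p-1$ is not a power of $2$. By Corollary~6.14 of \cite{book} together with the Chinese Remainder Theorem, $\mathbb{Q}_n\cong\mathbb{Q}_{p^{m}}\times\mathbb{Q}_{n/p^{m}}$, and by Lemma~7.1 of \cite{book} the factor $\mathbb{Q}_{p^{m}}$ is cyclic of order $p^{m-1}\frac{p-1}{2}$. Take $C=\mathbb{Q}_{p^{m}}\times\{1\}\leq\mathbb{Q}_n$, so that $C\cong\mathbb{Z}_k$ with $k=p^{m-1}\frac{p-1}{2}$.

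Then I would count the primes dividing $k$. Write $p-1=2^{s}t$ with $t>1$ odd, and pick an odd prime $q\mid t$, so $q\neq p$. Since $m\geq 2$ we always have $p\mid k$. If $s\geq 2$, then $2\mid k$ as well, so $k$ is divisible by the three distinct primes $2,p,q$; similarly, if $t$ has two distinct prime factors, $k$ again has at least three. In either event Proposition~\ref{prop 27} yields a hole in $\mathfrak{g}(C)$, hence in $\mathfrak{g}(\mathbb{Q}_n)$, and we are done. The one case left over is $s=1$ with $t=q^{c}$ a prime power, where $k=p^{m-1}q^{c}$; this is already of the non-chordal shape $p^{a}q^{b}$ with $a,b\geq2$ as soon as $m\geq3$ and $c\geq2$, which disposes of that sub-case too.

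The main obstacle is therefore the residual configuration $s=1$, $t=q^{c}$ together with $m=2$ or $c=1$: here $C$ by itself is only $\mathbb{Z}_{pq^{c}}$ or $\mathbb{Z}_{p^{m-1}q}$, both of which are chordal, so a larger cyclic subgroup is required. If $n$ has a second odd prime factor $r$, I would enlarge $C$ inside $\mathbb{Q}_{p^{m}}\times\mathbb{Q}_{r^{\ell}}$, choosing a cyclic subgroup whose order acquires the prime $r$ (or a new prime coming from $r-1$); the delicate bookkeeping is to check that a suitable divisor always exists. The genuinely tight endpoint is $n=p^{m}$ a prime power with $p-1=2q$, $q$ prime (for instance $n=49$), where no further prime is available from $n$; settling, or correctly delimiting, the statement in that corner is the part of the argument I would reserve for last, since it is not reached by the subgroup-embedding scheme above.
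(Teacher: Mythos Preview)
Your approach is exactly the one the paper gestures at: its entire proof reads ``The proof follows as in Proposition~\ref{prop 32} and Lemma~7.1 in \cite{book}.'' You have simply been more careful in carrying it out, and in doing so you have located not a gap in your method but a counterexample to the proposition itself.

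Take $n=49$. It is odd, not square free, and $7$ is not a Fermat prime, so the hypotheses hold. Yet $\mathbb{Q}_{49}$ is cyclic of order $\phi(49)/2=21=3\cdot 7$, and by Proposition~\ref{prop 27} the graph $\mathfrak{g}(\mathbb{Z}_{21})$ \emph{is} chordal. The same happens for every $n=p^{m}$ with $(p-1)/2$ a prime power $q^{c}$ and either $m=2$ or $c=1$: then $\mathbb{Q}_{n}\cong\mathbb{Z}_{p^{m-1}q^{c}}$ has the chordal shape $p^{a}q$ or $pq^{b}$. Concrete instances include $n=7^{m}$, $n=11^{m}$, $n=23^{m}$ for all $m\ge 2$, and $n=19^{2}$. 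The point is that passing from $\mathbb{U}_{p^{m}}$ to $\mathbb{Q}_{p^{m}}$ divides the order by $2$, which can remove the factor of $2$ that in Proposition~\ref{prop 32} guaranteed three distinct prime divisors; the paper's one-line proof does not register this loss. Your instinct to ``correctly delimit the statement in that corner'' is the right conclusion: the proposition needs an extra hypothesis (for example, excluding prime powers $n=p^{m}$ with $(p-1)/2$ a prime power), and no proof of it as written can succeed.
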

\begin{proof}
	The proof follows as in proposition \ref{prop 32} and Lemma 7.1 in \cite{book}.
\end{proof}

	\begin{proposition}
		Let $n$ be an even integer. Then, if \begin{enumerate}
			\item[i.] $2^{f}\parallel n$  where $f\geq 5$ and if there are at least two distinct primes $p$ and $q$ and positive integers $\alpha, \beta$, where neither of $p,q$ are Fermat's prime, then $\mathfrak g(\mathbb Q_{n})$ is not chordal.
			\item[ii.] If $2^{f}\parallel n, f\geq 2$ and $p^{\alpha}, q^{\beta} \parallel n$, where $\alpha,\beta \geq 2$ and neither of $p,q$ are Fermat's prime, then
			$\mathfrak g(\mathbb Q_{n})$ is not chordal.
		\end{enumerate} 
	
	\end{proposition}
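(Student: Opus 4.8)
The plan is to mimic the one-line argument behind Proposition~\ref{prop 33}: record the structure of $\mathbb{U}_n$ from Corollary~6.14 of \cite{book} and of $\mathbb{Q}_n$ from Lemma~7.1 of \cite{book}, and then invoke Proposition~\ref{prop 27}. Two facts organize everything: squaring respects the Chinese Remainder decomposition, so $\mathbb{Q}_n\cong\prod_i\mathbb{Q}_{p_i^{a_i}}$ with $\mathbb{Q}_{p^a}\cong\mathbb{Z}_{p^{a-1}(p-1)/2}$ for odd $p$ and $\mathbb{Q}_{2^f}\cong\mathbb{Z}_{2^{f-3}}$ for $f\ge 3$; and for any subgroup $H\le\mathbb{Q}_n$ the power graph $\mathfrak{g}(H)$ is exactly the subgraph of $\mathfrak{g}(\mathbb{Q}_n)$ induced on $H$, so it suffices in each case to produce a subgroup $H$ with $\mathfrak{g}(H)$ non-chordal.

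For part (ii), $p$ and $q$ are odd primes (not Fermat primes) and $\alpha,\beta\ge 2$, so $(p-1)/2$ is not a power of $2$ and hence has an odd prime divisor $r$, with $r\ne p$; then $\mathbb{Q}_{p^\alpha}\cong\mathbb{Z}_{p^{\alpha-1}(p-1)/2}$ has order divisible by the two distinct primes $p$ and $r$, and similarly $\mathbb{Q}_{q^\beta}$ has order divisible by $q$ and some odd prime $s\mid q-1$. The subgroup $\mathbb{Q}_{p^\alpha}\times\mathbb{Q}_{q^\beta}\le\mathbb{Q}_n$ contains a cyclic subgroup $\mathbb{Z}_m$ with $m=\operatorname{lcm}\!\big(|\mathbb{Q}_{p^\alpha}|,|\mathbb{Q}_{q^\beta}|\big)$, and a short check shows $m$ has at least three distinct prime divisors: $p\ne q$, $r\ne p$, $s\ne q$, while $r=q$ and $s=p$ cannot hold simultaneously since that would force both $q\mid p-1$ and $p\mid q-1$. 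Thus $m$ is neither a prime power nor of the form (prime power)$\cdot$(prime), so Proposition~\ref{prop 27} makes $\mathfrak{g}(\mathbb{Z}_m)$, hence $\mathfrak{g}(\mathbb{Q}_n)$, non-chordal.

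For part (i) I would split on how many distinct primes divide $|\mathbb{Q}_n|$. If there are at least three, then $\mathbb{Q}_n$ contains a cyclic subgroup whose order has at least three distinct prime divisors (take the least common multiple of the orders of the cyclic summands), and Proposition~\ref{prop 27} applies exactly as above. Otherwise $|\mathbb{Q}_n|$ has exactly two distinct prime divisors; tracing the factorization one sees these must be $2$ and a single odd prime $r$, forcing $(p-1)/2=r^a$ and $(q-1)/2=r^b$, and $\mathbb{Q}_n$ then contains the subgroup $K:=\mathbb{Z}_{2^{f-3}}\times\mathbb{Z}_{r^a}\times\mathbb{Z}_{r^b}$, where $2^{f-3}\ge 4$ because $f\ge 5$, and $a\ne b$ because $p\ne q$ gives $r^a\ne r^b$. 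Assuming $a<b$, I would produce the hole in $\mathfrak{g}(K)$ explicitly: let $B=\langle(1,0,1)\rangle$ and $D=\langle(1,1,1)\rangle$ in the coordinates of $K$. These are incomparable cyclic subgroups of order $2^{f-3}r^b$ (neither contains the other, since they have equal order and the middle coordinate of every element of $B$ is $0$), and $B\cap D$ is cyclic of order $2^{f-3}r^{b-a}$, so it contains incomparable subgroups $A$ of order $2$ and $C$ of order $r$. A nontrivial element of $A$, a generator of $B$, a nontrivial element of $C$, and a generator of $D$, in this cyclic order, then form a chordless $4$-cycle in $\mathfrak{g}(K)\subseteq\mathfrak{g}(\mathbb{Q}_n)$.

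The bookkeeping with the Chinese Remainder decomposition, and the verification that the displayed $4$-cycle has no chord (the opposite pairs, namely the $A$-vertex with the $C$-vertex and $B$ with $D$, are incomparable), are routine. The one point that needs care is the two-prime subcase of part (i): one must realize that the naive attempt --- taking two maximal cyclic subgroups that each involve a \emph{different} one of the two $r$-power summands --- fails, because their intersection then lies entirely in the $2$-part and $\mathbb{Z}_{2^{f-3}}\times\mathbb{Z}_{r^a}\times\mathbb{Z}_{r^a}$ is actually chordal; the remedy is to exploit $p\ne q$ to get unequal exponents and make $B$ and $D$ share the \emph{larger} $r$-power summand, which is precisely the construction above.
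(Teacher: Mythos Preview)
Your treatment of part (ii) is correct and is exactly the argument the paper has in mind (the paper's proof is the single sentence ``follows as in Proposition~\ref{prop 33}'', i.e.\ combine Corollary~6.14 and Lemma~7.1 of \cite{book} with Proposition~\ref{prop 27}).  Your part (i), however, has a real gap in the two--prime subcase.

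You assert that if the only primes dividing $|\mathbb{Q}_n|$ are $2$ and an odd prime $r$, then necessarily $(p-1)/2=r^{a}$ and $(q-1)/2=r^{b}$, and hence $a\neq b$ because $p\neq q$.  This is not forced: the hypothesis only gives $(p-1)/2=2^{c}r^{a}$ and $(q-1)/2=2^{d}r^{b}$ for some $c,d\ge 0$.  A concrete counterexample is $p=7$, $q=13$: here $(p-1)/2=3$ and $(q-1)/2=6=2\cdot 3$, so $r=3$ and $a=b=1$.  In that situation your subgroup $K=\mathbb{Z}_{2^{f-3}}\times\mathbb{Z}_{r}\times\mathbb{Z}_{r}$ is precisely the group you yourself flag as chordal, and your $B,D$ have intersection lying entirely in the $2$--part, so no $4$--hole is produced.

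The repair is short.  If $a=b\ge 2$ then the exponent of $\mathbb{Q}_n$ is already divisible by $2^{f-3}r^{a}$ with both exponents $\ge 2$, and Proposition~\ref{prop 27} applies to a maximal cyclic subgroup.  If $a=b=1$ then $p\neq q$ forces $c\neq d$; without loss of generality $d\ge 1$, so $\mathbb{Q}_n$ contains the subgroup $L=\mathbb{Z}_{2^{f-3}}\times\mathbb{Z}_{2}\times\mathbb{Z}_{r}$ (the middle factor coming from the $2$--part of $\mathbb{Q}_{q}$).  Now run your own construction with the roles of $2$ and $r$ interchanged: in $L$ take $B=\langle(1,0,1)\rangle$ and $D=\langle(1,1,1)\rangle$, both of order $2^{f-3}r$; they are incomparable, their intersection has order $2^{f-4}r$ (and $f-4\ge 1$), and elements of order $2$ and of order $r$ inside $B\cap D$ give the desired chordless $4$--cycle.
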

	\begin{proof}
		The proof follows as in proposition \ref{prop 33}.
	\end{proof}
	Finally, we conclude with a question: Precisely, for what positive integers $n,\mathfrak g(\mathbb U_{n})$ and $\mathfrak g(\mathbb Q_{n})$ are chordal?
	
\textit{Acknowledgment:\/} Authors acknowledge Dr. M. K. Sen for his helpful suggestions and advice towards this work.

\end{document}